\documentclass[a4paper,11pt,reqno]{amsart} % book report letter
\usepackage{amsmath}
\usepackage{amsfonts}
\usepackage[T1]{fontenc}
\usepackage[utf8]{inputenc}
\usepackage[mathscr]{euscript}
\usepackage{verbatim}
\usepackage{amssymb,latexsym}
\usepackage{amsthm}
\usepackage{eufrak}
\usepackage{color}

\usepackage[lmargin=2.5 cm,rmargin=2.5 cm,tmargin=3.5cm,bmargin=2.5cm,paper=a4paper]{geometry}
\DeclareMathOperator{\curl}{curl}

\newtheorem{thm}{Theorem}[section]
\newtheorem{pro}[thm]{Proposition}

\newtheorem{lem}[thm]{Lemma}

\newtheorem{definition}[thm]{Definition}

\newtheorem{proposition}[thm]{Proposition}
\newtheorem{corollary}[thm]{Corollary}

\theoremstyle{remark}
\newtheorem{rem}[thm]{Remark}

\newcommand{\Int}[2]{\displaystyle{\int_{#1}^{#2}}}

\newcommand{\Ab}{\mathbf {A}}

\newcommand{\R}{\mathbb{R}}

\newcommand{\Hr}{\mathcal H_{{\rm harm}}(\zeta,\xi)}
\newcommand{\Hw}{\mathcal H_{\zeta,\beta,\xi,h}}
\newcommand{\Hwo}{\mathcal H_{\zeta,0,\xi,h}}

\newcommand{\norm}[1]{\left\|#1\right\|}

\def\sig#1{\vbox{\hsize=5.5cm
\kern2cm\hrule\kern1ex
\hbox to \hsize{\strut\hfil #1 \hfil}}}
\newcommand\signatures[4]{%
\vspace{3cm}
\hbox to \hsize{\hfil #1, \today\hfil}
\vspace{3cm}
\hbox to \hsize{\quad#2\hfil\hfil #3\quad}
\vspace{3cm}
\hbox to \hsize{\hfil#4\hfil}}
\numberwithin{equation}{subsection}

\title[Diamagnetism vs Robin condition]{Diamagnetism versus Robin condition and concentration of ground states}
\author[A. Kachmar]{Ayman Kachmar}
\address[A. Kachmar]{Lebanese University, Department of Mathematics, Hadath, Lebanon}
\email{ayman.kashmar@gmail.com}
%\author[M. Nasrallah]{Marwa Nasrallah}
%\address[M. Nasrallah]{Lebanese International University, School of Arts and Sciences, Rayak, Lebanon}
%\email{marwa.nasrallah@liu.edu.lb}
\date{\today}
\begin{document}
\maketitle
\begin{abstract}
We estimate the ground state energy for the magnetic Laplacian with   a Robin condition. 
In a  special asymptotic limit, we find that the magnetic field does not contribute to the two-term expansion of the ground state energy, thereby proving that the Robin condition weakens diamagnetism. We discuss a semi-classical version of the operator and prove that the ground states concentrate near the boundary points of  maximal curvature.  
\end{abstract}

\section{Introduction}

This paper is motivated by two different questions. The first question concerns  the analysis of the ground state energy and the concentration of the ground states for the magnetic Laplacian with a Robin condition and a semi-classical parameter, and is a continuation of the work in \cite{Ka-cr, Ka-jmp}.  The second question is around the influence of the Robin condition on diamagnetism. We will find that these two questions are intimately related and we will get satisfactory answers for both. Besides the concentration of the ground states near the points of maximal curvature, we will identify the optimal strength of the Robin condition/magnetic field such that diamagnetism occurs to leading order of the energy. 

The results in this paper are valid in an open set   $\Omega\subset\R^{2}$. We will assume that the boundary $\Gamma=\partial\Omega$ of $\Omega$ is $C^3$  smooth, 
compact and consists  of a finite number of connected
components. Our assumptions allow for $\Omega$  to be an {\it interior}
or {\it exterior} domain, and the smoothness of the boundary ensures the existence of a normal vector every where on the boundary. We will denote by $\nu$ the unit outward normal vector (field) of  $\partial\Omega$.

\subsection{Concentration of ground states}

Let us  introduce the  semi-classical magnetic Laplacian that we will study. 
Consider the  magnetic potential:
\begin{equation}\label{eq:A0}
\R^2\ni(x_1,x_2)\mapsto\Ab_0(x_1,x_2)=(-x_2,0)\,.
\end{equation}
This magnetic potential generates the constant magnetic field:
\begin{equation}\label{eq:mf}
B:= {\rm curl}\, \Ab_0=1\,.
\end{equation}
We are interested in the same magnetic Laplacian studied in \cite{Ka-jmp} which involves four  parameters, the strength of the magnetic field $b>0$, the {\it semi-classical} parameter $h>0$, two   parameters $\gamma\in\R\setminus\{0\}$ and $\alpha\in\R$ that will serve in defining the boundary condition.     
The operator is
\begin{equation}\label{Shr-op-Gen}
\mathcal{L}^{\alpha,\gamma}_{h,b,\Omega}=-(h\nabla-ib\Ab_0)^{2}\quad {\rm in~}L^2(\Omega),
\end{equation}
with a boundary condition of the third type (Robin condition)
\begin{equation}\label{eq:bc}
\nu\cdot(h\nabla-i\Ab_0)u+h^\alpha\gamma\,u=0\quad{\rm on}~\partial\Omega\,.
\end{equation}
This operator can be defined via  Friedrich's Theorem and the closed
semi-bounded quadratic form,
\begin{equation}\label{QF-Gen}
 \mathcal{Q}^{\alpha,\gamma}_{h,b,\Omega}(u):=\norm{(h\nabla-ib\Ab_0)u}^{2}_{L^{2}(\Omega)}+h^{1+\alpha}\gamma\Int{\partial\Omega}{}|u(x)|^{2}dx\,.%\quad{\rm  on}\quad C^{\infty}(\overline{\Omega}).
\end{equation}
This quadratic form is defined in the `magnetic' Sobolev space
\begin{equation}\label{eq:H1-mag}
H^1_{h^{-1}b\Ab_0}(\Omega)=\{u\in L^2(\Omega)~:~(\nabla-ih^{-1}b\Ab_0)\in L^2(\Omega)\}\,.\end{equation}
The  parameters $\alpha$ and $\gamma$ serve in controlling the `strength' of the boundary condition in  \eqref{QF-Gen}. As we shall see, the {\it sign} of $\gamma$ and the {\it values} of $\alpha$ have a strong influence on the spectrum of the magnetic Laplacian in \eqref{Shr-op-Gen}. Notice that  $\gamma=0$ corresponds to the extensively studied magnetic Laplacian with Neumann condition (cf. \cite{HM, FH-b}), while $b=0$ corresponds to the Laplacian {\it without} a magnetic field. That justifies the assumption $\gamma\not=0$ and $b>0$.

The ground state energy (lowest eigenvalue) of the operator in \eqref{Shr-op-Gen} is:
\begin{equation}\label{eq:gse}
\mu_1(h;b,\alpha,\gamma)=\inf_{u\in H^1_{b\Ab_0}(\Omega)\setminus\{0\}}\frac{\mathcal{Q}^{\alpha,\gamma}_{h,b,\Omega}(u)}{\|u\|^2_{L^2(\Omega)}}\,.
\end{equation}
We will study the asymptotic limit where the semi-classical parameter $h$ tends to $0_+$, while the parameters $b$, $\alpha$ and $\gamma$ are assumed fixed. In this regime, we can reduce to the case $b=1$ simply by observing that, for all $b>0$,
\begin{equation}\label{eq:b=1}
\mu_1(h;b,\alpha,\gamma)=b^2\mu_1\big(b^{-1}h;b=1,\alpha,b^{-1+\alpha}\gamma\big)\,,
\end{equation}
and that as long as we assume $b$ fixed, $b^{-1}h\to0$ when $h\to0_+$.

The results in \cite{Ka-jmp} distinguish between two situations. The first one corresponds to $\alpha\geq 1/2$ and is fairly understood: A two term asymptotic expansion of the ground state energy in \eqref{eq:gse} is established; the ground state energy is in the discrete spectrum (cf. \cite{KP}); and the ground states are localized near the boundary points where the curvature is maximal.

The second situation corresponds to $\alpha<\frac12$ and is less understood.  Here the sign of $\gamma$ will play a dominant role. The contribution in this paper will clarify the situation when $\gamma<0$. For $\gamma>0$, the ground state energy satisfies
$$\mu_1(h;b,\alpha,\gamma)=bh+ho(1)\quad (h\to0_+)\,.$$
For $\gamma<0$, the behavior of the ground state energy is completely different and  displayed as follows,
$$\mu_1(h;b,\alpha,\gamma)=-\gamma^2h^{2\alpha}+h^{2\alpha}o(1)\quad (h\to0_+)\,.$$
Note that this asymptotic expansion does not involve the strength of the magnetic field $b$. Again,  the ground state energy is an eigenvalue, as long as the semi-classical parameter $h$ is sufficiently small (cf. \cite{KP}). In \cite{Ka-jmp}, it is proved that the ground states concentrate near the boundary (when $\alpha<\frac12$ and $\gamma<0$). The natural question is then weather one can refine the concentration near some special boundary points, e.g. points of maximal curvature. We will give an affirmative answer to this question in Theorem~\ref{thm:gs-gse} below.

Since the boundary is assumed smooth, there exists a geometric constant $t_0\in(0,1)$ such that, if ${\rm dist}(x,\partial\Omega)<t_0$, then we may assign a unique point $p(x)\in\partial\Omega$ such that ${\rm dist}(x,p(x))={\rm dist}(x,\partial\Omega)$. The function $\kappa(\cdot)$ denotes the curvature along the boundary.

In the statement of Theorem~\ref{thm:gs-gse}, $\kappa_{\max}$ is the maximum of the curvature along the boundary,
$$\zeta=\frac{b}{\gamma^2}h^{1-2\alpha}\,,$$
$n\in\mathbb N$ is the smallest positive integer satisfying
\begin{equation}\label{eq:cond-n}
(n+1)\frac{1-2\alpha}{1-\alpha}>\frac12\,,\end{equation}
and $e_n(\zeta)$ is the quantity that we will introduce in \eqref{eq:e-n} below. As $\zeta\to0_+$,  $e_n(\zeta)$ behaves like $\frac14\zeta^2$ (cf. Remark~\ref{rem:en}).   

Now we are ready to state:

\begin{thm}\label{thm:gs-gse}
Suppose that $b>0$, $\alpha<\frac12$ and $\gamma<0$. 
\begin{enumerate}
\item As  $h\to0_+$, the ground state energy in \eqref{eq:gse} satisfies
$$\mu_1(h;b,\alpha,\gamma) =
\left\{
\begin{array}{ll}
-\gamma^2h^{2\alpha}+e_n(\zeta)\gamma^2h^{2\alpha} +\gamma\kappa_{\max}h^{1+\alpha}+h^{1+\alpha}o(1)&{\rm if~}\frac13<\alpha<\frac12\,,\\
&\\
-\gamma^2h^{2/3}+(\frac1{4\gamma^2}b^2+\gamma\kappa_{\max})h^{4/3}+ h^{4/3}o(1)&{\rm if~}\alpha=\frac13\,,\\
&\\
-\gamma^2h^{2\alpha} +\gamma\kappa_{\max}h^{1+\alpha}+h^{1+\alpha}o(1)&{\rm if~}\alpha<\frac13\,.
\end{array}\right.
$$
%\item If $\Omega$ is an exterior domain and $h$ is sufficiently small, $\mu_1(h;b,\alpha,\gamma)$ is in the discrete spectrum  of the operator in \eqref{Shr-op-Gen}.
\item There exist  constants $\rho\in(0,\frac12)$, $\eta^*\in(0,\frac14)$, $C>0$ and $h_0>0$ such that, for all $h\in (0,h_0)$, every ground state $u_h$ of \eqref{eq:gse} satisfies
$$\int_{\Omega_{\rm bnd}}|u_{h,\zeta}|^2\,dx\leq C\exp\left(-h^{\frac{\eta^*-\frac14}{2(1-\alpha)}}\right)\,,\quad \int_{\Omega_{\rm int}}|u_{h,\zeta}|^2\,dx\leq C\exp\left(
-h^{\frac{\rho-\frac12}{2(1-\alpha)}}\right)\,,$$
where
$$
\begin{aligned}
&\Omega_{\rm int}=\{x\in\Omega~:~{\rm dist}(x,\partial\Omega)\geq h^{\frac\rho{2(1-\alpha)}}\}\quad{\rm and}\\
&\Omega_{\rm bnd}=\{x\in\Omega\setminus\overline\Omega_{\rm int}~:~\kappa_{\max}-\kappa(p(x))\geq h^{\frac{\eta^*}{2(1-\alpha)}}\}\,.
\end{aligned}
$$
\end{enumerate}
\end{thm}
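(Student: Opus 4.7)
The strategy is to establish matching upper and lower bounds on $\mu_1(h;b,\alpha,\gamma)$, yielding part~(1), and then to upgrade them into the concentration estimates of part~(2) via Agmon-type weighted inequalities. The starting point is a standard reduction: work in tubular coordinates $(s,t)$ near $\partial\Omega$ with $t=\dist(x,\partial\Omega)$, choose a gauge in which the magnetic potential reads $\widetilde{\Ab}_0(s,t)=(-t+\tfrac12 t^2\kappa(s),0)$, and rescale $t=h^{1-\alpha}|\gamma|^{-1}\tau$. Normalising the quadratic form by $\gamma^2 h^{2\alpha}$ makes the expansion parameter $\zeta=bh^{1-2\alpha}/\gamma^2$ explicit: the normal part becomes $-\partial_\tau^2$ with the Robin condition $\partial_\tau u(0)=-u(0)$ (whose ground state produces the $-\gamma^2h^{2\alpha}$ term), the magnetic--tangential part becomes an operator of size $\zeta$, and the curvature correction enters through the metric factor $1-t\kappa$. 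The quantity $e_n(\zeta)$ is then precisely the $n$-term expansion in $\zeta$ of the ground state eigenvalue of the resulting half-line fibre operator, and the condition $(n+1)(1-2\alpha)/(1-\alpha)>\tfrac12$ is exactly what is needed for the truncation error to be $o(h^{1+\alpha})$ in the original variables.

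For the upper bound I would take a quasimode $u(s,t)=\chi\bigl((s-s_0)/h^{\alpha}\bigr)\,v_n(\tau)\,e^{i\phi(s,t)/h}$, with $s_0$ a point of maximal curvature, $\chi$ a smooth cutoff on the Landau scale $h^{\alpha}$, $v_n(\tau)$ an $n$-term approximation to the fibre ground state, and $\phi$ a gauge phase neutralising the leading tangential momentum; a direct expansion of the Rayleigh quotient then yields the three stated asymptotics, which differ only in the comparison between the magnetic correction $e_n(\zeta)\gamma^2 h^{2\alpha}\asymp h^{2-2\alpha}/\gamma^2$ and the curvature correction $\gamma\kappa_{\max}h^{1+\alpha}$, the two meeting at $\alpha=\tfrac13$. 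For the lower bound I would first localise to the tubular neighbourhood $\{t\le Th^{1-\alpha}\}$ using the Agmon-type decay established in \cite{Ka-jmp}; then apply an IMS partition of unity in $s$ at scale $h^{\alpha}$ and compare the two-dimensional quadratic form patch by patch with the fibre operator at the frozen value $\kappa(s_j)$, whose lowest eigenvalue is $-\gamma^2h^{2\alpha}+e_n(\zeta)\gamma^2h^{2\alpha}+\gamma\kappa(s_j)h^{1+\alpha}+o(h^{1+\alpha})$. Minimising over $j$ produces the matching lower bound.

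Part~(2) then follows from a two-scale Agmon argument. For the interior estimate, use a weight proportional to $\dist(x,\partial\Omega)/h^{1-\alpha}$: outside the boundary layer the quadratic form exceeds $\mu_1$ by at least $c\gamma^2h^{2\alpha}$, which yields the stated exponential decay with exponent $(\rho-\tfrac12)/(2(1-\alpha))$ after taking $\rho$ small. For the boundary estimate, use a weight proportional to $(\kappa_{\max}-\kappa(p(x)))/h^{1+\alpha}$: by the lower-bound analysis, the effective potential along $\partial\Omega$ equals $\gamma\kappa(s)h^{1+\alpha}$ to leading order and attains its minimum precisely where $\kappa=\kappa_{\max}$, so on $\Omega_{\mathrm{bnd}}$ the energy exceeds $\mu_1$ by a definite amount and the stated bound follows, the exponent $\eta^*$ being dictated by the room left by the $o(h^{1+\alpha})$ remainder. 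The principal obstacle is the lower-bound step: controlling the tangential momentum $(hD_s-b\widetilde A_1)^2(1-t\kappa)^{-2}$, the gauge phase, and the IMS commutators simultaneously within the target error $o(h^{1+\alpha})$ is particularly delicate near the transition $\alpha=\tfrac13$, where the magnetic and curvature corrections have equal size.
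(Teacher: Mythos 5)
Your proposal correctly identifies the high-level strategy: rescale in the normal variable to bring the Robin condition to the model form $u'(0)=-u(0)$, choose a gauge exposing the tangential magnetic term, analyse the half-line fibre operator (giving $e_n(\zeta)$ after minimising over the Fourier dual variable), build a localised quasimode near a curvature maximum for the upper bound, use an IMS partition in $s$ plus comparison with frozen-curvature fibre operators for the lower bound, and run weighted Agmon estimates for part~(2). This is the paper's route. The paper packages the rescaling as the exact identity \eqref{gse=gse}, reducing to the cleaner operator $P_{h,\zeta}$ with a single small parameter $\zeta\sim h^\epsilon$, and then proves Theorem~\ref{thm:mag-Lap} in these variables; your approach of carrying $\alpha,\gamma,b$ through is morally equivalent.

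However, the explicit scales you choose do not work on the full range $\alpha<\tfrac12$, and fixing them is not a cosmetic matter. Your tangential localisation scale $h^{\alpha}$ is not even a localisation scale when $\alpha\le0$, and for $0<\alpha<\tfrac12$ it produces a cutoff kinetic energy $\sim h^{2-2\alpha}$ which is $o(h^{1+\alpha})$ only for $\alpha<\tfrac13$; so your quasimode misses the required precision exactly in the regime $\tfrac13\le\alpha<\tfrac12$ where the magnetic correction $e_n(\zeta)$ must be captured. Translated to the original variables, the paper uses $h^{(1-2\alpha)/2}$ (i.e.\ $h_{\mathrm{new}}^{\epsilon/2}$) when $\alpha>\tfrac13$ and $h^{(1-\alpha)/4}$ (i.e.\ $h_{\mathrm{new}}^{1/8}$) when $\alpha\le\tfrac13$, both of which keep the cutoff error below $h^{1+\alpha}$ uniformly. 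A similar problem occurs in part~(2): the weight $\exp\big(c(\kappa_{\max}-\kappa)/h^{1+\alpha}\big)$ produces a commutator $h^2|\nabla\Psi|^2/\Psi^2\sim h^{-2\alpha}(\kappa_{\max}-\kappa)$, which overwhelms the available gap $|\gamma|h^{1+\alpha}(\kappa_{\max}-\kappa)$ unless $\alpha<-\tfrac13$, so the Agmon argument collapses. The correct scale is $(\kappa_{\max}-\kappa)/h^{(1-\alpha)/2}$ (the paper's $h_{\mathrm{new}}^{1/4}$), which balances the commutator against the gap; this is the content of Lemma~\ref{lem:conc-gs} combined with the inequality $|\phi'|^2\le C\phi$ in the proof of Theorem~\ref{thm:conc-gs}. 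You should also note that the lower bound cannot simply compare with the flat fibre operator: one must use the weighted model $\Hw$ of Section~\ref{sec:Hw}, which carries the $1-t\kappa$ Jacobian and the $\tfrac12 t^2\kappa$ correction to the gauge, and this is precisely what Propositions~\ref{lem:H0b;l} and \ref{lem:H0b;l*} supply.
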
   

Notice that the asymptotic expansions for $\mu_1(h;b,\alpha,\gamma)$ are compatible in the cases $\alpha=\frac13$ and $\alpha<\frac13$. Formally, we get the expansion for $\alpha<\frac13$ by taking $\gamma\to-\infty$ in the case $\alpha=\frac13$. 

Theorem~\ref{thm:gs-gse} adds two improvements to the results in \cite{Ka-jmp} by
\begin{itemize}
\item establishing  a two-term expansion for the ground state energy;
\item refining the concentration of the ground states near the points of maximal curvature.
\end{itemize} 

\begin{rem}
The magnetic field is assumed constant in Theorem~\ref{thm:gs-gse}, but the methods in the this paper should allow for dealing with non-constant $C^1$ magnetic fields.
\end{rem}

\subsection{Diamagnetism}

Here we will discuss the question of diamagnetism. We will find that imposing a Robin condition may slow diamagnetism (and even neglect this effect). Let $\beta\in\R$, $H\geq 0$ and  $\mathcal L^\beta(H)$ be the self-adjoint operator in $L^2(\Omega)$,
\begin{equation}\label{L(H)}
\mathcal L^\beta(H)=-(\nabla-iH\Ab_0)^2
\end{equation}
with domain
$$D\big(\mathcal L^\beta(H)\big)=\{u\in H^1_{H\Ab_0}(\Omega)~:~(\nabla-iH\Ab_0)^2\in L^2(\Omega)~{\rm and}~\nu\cdot(\nabla -iH\Ab_0)u+\beta u=0~{\rm on~}\partial\Omega\}\,.
$$
Here the magnetic Sobolev space $H^1_{H\Ab_0}(\Omega)$ is introduced in \eqref{eq:H1-mag}. Note that, for $H=0$, $\mathcal L^\beta(0)$ is the Robin Laplacian, while for $H>0$, $\mathcal L^\beta(H)$ is the magnetic Laplacian with a (magnetic) Robin condition. Let $\sigma\big(\mathcal L^\beta(H)\big)$ be the spectrum of the operator $\mathcal L^\beta(H)$. We introduce the ground state energy,
\begin{equation}\label{eq:gse*}
\tilde\mu_1(\beta;H)=\inf\sigma\big(\mathcal L^\beta(H)\big)\,.
\end{equation}
The diamagnetic inequality yields, for all $\beta\in\R$ and $H>0$,
\begin{equation}\label{eq:diamag}\tilde\mu_1(\beta;H) -\tilde\mu_1(\beta;0)\geq0\,.\end{equation}
In physical terms, this inequality refers to diamagnetism. It simply says that introducing a magnetic field increases the ground state energy. 
We will see that, when $\beta\to-\infty$, diamagnetism is weak in the sense that the difference $ \tilde\mu_1(\beta;H) -\tilde\mu_1(\beta;0)$ is small. This property is a unique feature for the Robin condition as it fails for the Neumann and Dirichelt conditions. On the contrary, in simply connected domains, a Neumann boundary condition induces  {\it strong} diamagnetism
 (cf. \cite{FH-b}). 

The asymptotic analysis of the spectrum of the Robin Laplacian is studied in many papers,
cf. \cite{Exner, HeKa, HP, Pan, PanP, PanP2}. In particular,  as $\beta\to-\infty$, the ground state energy satisfies,
\begin{equation}\label{eq:L(0)}
\tilde\mu_1(\beta;0)=-\beta^2+\beta\kappa_{\max}+\beta o(1)\,.
\end{equation}
We will write an  asymptotic expansion for the magnetic ground state energy $\tilde\mu_1(\beta;H)$ valid when $\beta\to-\infty$ and $H\to\infty$ simultaneously. This is the content of Theorem~\ref{thm:dia-mag} below. In particular, we will get a fair knowledge about the difference in \eqref{eq:diamag} which measures the strength of diamagnetism. 

The statement of Theorem~\ref{thm:dia-mag} requires a real-valued function $\Theta(\cdot)$   introduced in \cite{Ka-jmp}. For $\gamma\in\R$, define the ground state energy
\begin{equation}\label{eq:Theta}
\Theta(\gamma)=\inf_{\xi\in\R}\left(\inf_{u\in B^1(\R_+)\setminus\{0\}}
\frac{\displaystyle\int_0^\infty\big(|u'(t)|^2+|(t-\xi)u|^2\big)\,dt-\gamma |u(0)|^2}{\|u\|_{L^2(\R_+)}^2}
\right)\,,
\end{equation}
where
$$B^1(\R_+)=\{u\in L^2(\R_+)~:~u',(t-\xi)u\in L^2(\R_+)\}\,.$$

In \cite{Ka-jmp}, it is proved that $\Theta(\cdot)$ is smooth, increasing and $\Theta(\gamma)>-\gamma^2$ for all $\gamma\leq0$.
%In the statement of Theorem~\ref{thm:dia-mag}  

Now we are ready to state:

\begin{thm}\label{thm:dia-mag}
Let $\alpha\in\R\setminus\{1\}$, $0<c_1<c_2$ and $\beta_0<0$. Suppose that
$$\beta<\beta_0\quad{\rm and}\quad c_1|\beta|^{\frac1{1-\alpha}}\leq H\leq c_2|\beta|^{\frac1{1-\alpha}}\,.$$
\begin{enumerate}
\item If $\alpha>\frac12$,   the ground state energy satisfies, as $\beta\to-\infty$,
$$\tilde\mu_1(\beta;H)=\Theta(0)H+Ho(1)\,.$$
where $\Theta_0\in(0,1)$ is a universal constant.
\item If $\alpha=\frac12$,  the ground state energy satisfies, as $\beta\to-\infty$,
$$\tilde\mu_1(\beta;H)=H\Theta_0(\beta H^{-1/2})+H o(1)\,.$$
\item If $\frac13<\alpha<\frac12$,  the ground state energy satisfies, as $\beta\to-\infty$,
$$\tilde\mu_1(\beta;H)=-\beta^2+e_n\big(H\beta^{-2}\big)\beta^2+\beta\kappa_{\max}+\beta o(1)\,,$$
where $n$ is the smallest positive integer satisfying \eqref{eq:cond-n} and $e_n(\cdot)$ is introduced in \eqref{eq:e-n}.
\item If $\alpha=\frac13$,  the ground state energy satisfies, as $\beta\to-\infty$,
$$\tilde\mu_1(\beta;H)=-\beta^2+\left(\frac{H^2\beta^{-3}}4+\kappa_{\max}\right)\beta
+\beta o(1)\,.$$
\item If $\alpha<\frac13$,   the ground state energy satisfies, as $\beta\to-\infty$,
$$\tilde\mu_1(\beta;H)=-\beta^2+\kappa_{\max}\beta+\beta o(1)\,.$$
\end{enumerate}
\end{thm}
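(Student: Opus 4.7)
The plan is to deduce Theorem~\ref{thm:dia-mag} from the semi-classical estimates of Theorem~\ref{thm:gs-gse} (together with the two-term expansions of \cite{Ka-jmp} for the range $\alpha\geq 1/2$) by means of a one-parameter rescaling of the eigenvalue problem. Concretely, set
$$h=|\beta|^{-1/(1-\alpha)}\,,\qquad b=hH\,,\qquad \gamma=h^{1-\alpha}\beta\,.$$
Since $\beta<\beta_0<0$ we have $\gamma=-1$; by the hypothesis on $H$ we have $b\in[c_1,c_2]$; and, because $\alpha\neq 1$ (with $\alpha<1$ being the relevant range for $h\to 0_+$), the parameter $h\to 0_+$ as $\beta\to-\infty$. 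A direct algebraic manipulation of the quadratic forms yields
$$\int_\Omega|(\nabla-iH\Ab_0)u|^2\,dx+\beta\int_{\partial\Omega}|u|^2\,ds \;=\; h^{-2}\,\mathcal Q^{\alpha,\gamma}_{h,b,\Omega}(u)\,,$$
and since both problems are posed on the same magnetic Sobolev space $H^1_{H\Ab_0}(\Omega)=H^1_{h^{-1}b\Ab_0}(\Omega)$, taking infima gives the key identity
$$\tilde\mu_1(\beta;H)=h^{-2}\,\mu_1(h;b,\alpha,-1)\,.$$

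For the cases $\alpha<1/2$ (items (3), (4), (5)) one applies Theorem~\ref{thm:gs-gse}(1) with $\gamma=-1$ and $b\in[c_1,c_2]$ and substitutes the scaling identities
$$h^{2\alpha-2}=\beta^2,\qquad h^{\alpha-1}=-\beta,\qquad \zeta=\frac{b}{\gamma^2}h^{1-2\alpha}=H\beta^{-2},$$
together with the additional identity $b^2 h^{-2/3}=H^2\beta^{-2}$ required in case (4). Multiplying the three expansions of Theorem~\ref{thm:gs-gse} by $h^{-2}$ and making these substitutions (with attention to the sign flips introduced by $|\beta|=-\beta$) reproduces the formulas of cases (3), (4), (5) line by line. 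For the cases $\alpha\geq 1/2$, one invokes the analogous two-term asymptotic expansions of $\mu_1(h;b,\alpha,-1)$ established in \cite{Ka-jmp}: for $\alpha>1/2$ the magnetic term dominates and $\mu_1(h;b,\alpha,-1)=\Theta(0)\,bh+o(h)$; for the critical value $\alpha=1/2$ the magnetic and Robin scales balance, giving $\mu_1(h;b,1/2,-1)=bh\,\Theta(-b^{-1/2})+o(h)$. Multiplying by $h^{-2}$ and using $bh^{-1}=H$ together with $\beta H^{-1/2}=-b^{-1/2}$ produces cases (1) and (2) respectively.

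The main obstacle is really a bookkeeping issue rather than an analytic one: the $o(1)$ remainders in Theorem~\ref{thm:gs-gse} and in the quoted results of \cite{Ka-jmp} must hold \emph{uniformly} as $b$ varies over the compact interval $[c_1,c_2]$, not merely pointwise in $b$. Tracing through the quasimode construction (trial states localized near a boundary point of maximal curvature, coupled with a one-dimensional model operator on the half-line) and the matching Agmon-type localization estimates, all constants depend continuously on $b$ and therefore admit a uniform bound on any compact subinterval of $(0,\infty)$; this is what permits the semi-classical asymptotics to be pushed through the scaling without loss. The exclusion $\alpha=1$ in the hypothesis is precisely what is needed for the defining relation $h=|\beta|^{-1/(1-\alpha)}$ to be well-posed.
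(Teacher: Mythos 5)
Your proof is correct and follows essentially the same route as the paper: both establish a scaling identity $\tilde\mu_1(\beta;H)=h^{-2}\mu_1(h;\cdot,\alpha,\cdot)$ and then feed the resulting semi-classical problem into Theorem~\ref{thm:gs-gse} (for $\alpha<\frac12$) and the quoted two-term expansions from \cite{Ka-jmp} (for $\alpha\geq\frac12$). The one genuine difference is the choice of normalization: the paper sets $h=H^{-1}$, $b=1$, $\gamma=\beta H^{\alpha-1}$, so that $\gamma$ is the parameter that varies over a compact subset of $(-\infty,0)$, while you set $h=|\beta|^{-1/(1-\alpha)}$, $\gamma=-1$, $b=hH$, so that $b$ is the parameter that ranges over $[c_1,c_2]$. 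These two gauges are exchanged by the scaling relation \eqref{eq:b=1}, and your substitution bookkeeping ($\zeta=H\beta^{-2}$, $h^{2\alpha-2}=\beta^2$, $h^{\alpha-1}=-\beta$, $b^2=H^2|\beta|^{-3}$) checks out in each case. What your version buys is that it makes the uniformity obstacle explicit: Theorem~\ref{thm:gs-gse} as stated fixes $b$ (respectively $\gamma$ in the paper's gauge), so in either route one must verify that the $o(1)$ remainders are uniform as the free parameter ranges over a compact set away from the singular values $b=0$, $\gamma=0$. The paper elides this by simply remarking that ``$\gamma=\mathcal O(1)$''; you at least flag the issue and sketch why the constants in the quasimode and Agmon arguments are continuous in the parameter. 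Two minor caveats: your $h\to0_+$ argument implicitly assumes $\alpha<1$, but the statement allows $\alpha>1$ — a latent issue in the paper as well, since for $\alpha>1$ one has $H\to0$ and neither scaling produces $h\to0_+$; and the statement's ``$\Theta_0(\beta H^{-1/2})$'' should be read as $\Theta(\beta H^{-1/2})$, consistent with what you derive.
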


Theorem~\ref{thm:dia-mag} suggests that, in the limit $\beta\to-\infty$, diamagnetism occurs to leading order when  the strength of the magnetic field satisfies $H\approx \beta^{\sigma}$
and $\sigma\geq 2$.  

In the situation where $H\approx \beta^\sigma$ and $\sigma<2$, diamagnetism occurs as a correction term and will compete with the correction term coming from the curvature of the boundary. According to Theorem~\ref{thm:dia-mag}:
\begin{itemize}
\item If $\frac32<\sigma<2$, then diamagnetism  occurs in the second correction term while the influence of the curvature occurs in the third correction term\,;
\item If $\sigma=\frac32$, both diamagnetism and curvature corrections appear in the second correction term\,;
\item If $\sigma<\frac32$, dia-magnetism is weak and its contribution is negligible compared to the contribution of the  curvature correction term (compare with \eqref{eq:L(0)}).  
\end{itemize}

Through this paper, the following notation will be used. $C$ denotes a constant independent from the semi-classical parameter $h$. $\mathcal O(h^\infty)$ is a quantity satisfying that, for all $N\in\mathbb N$, there exist two constants $h_0\in(0,1)$ and $C_N>0$ such that, for all $h\in (0,h_0)$,
$|\mathcal O(h^\infty) |\leq C_nh^N$.

The paper is organized as follows. In Section~\ref{sec:AOp}, we analyze three auxiliary differential operators useful to prove Theorem~\ref{thm:gs-gse}. The proof of Theorem~\ref{thm:gs-gse} occupies all of Section~\ref{sec:proof}. Finally, in Section~\ref{sec:dia-mag}, we explain how to get the result in Theorem~\ref{thm:dia-mag} from the existing results on the semi-classical magnetic Laplacian with a Robin condition, in particular those in Theorem~\ref{thm:gs-gse}.

\section{Analysis of auxiliary operators} \label{sec:AOp}

\subsection{1D Laplacian on the half line}\
Here we introduce a simple $1D$ operator that will play a fundamental role in the next sections. This operator arises naturally in the analysis of the Robin Laplacian without magnetic field (cf. \cite{Pan, HeKa}).
The operator is
\begin{equation}\label{defH00}
\mathcal H_{0,0}:=-\frac{d^2}{d\tau^2} \mbox{ in }  L^2(\R_+)
\end{equation}
with domain
\begin{equation}
\{u\in
H^2(\R_+)~:\,u'(0)=- u(0)\}\,.
\end{equation}
 The spectrum of this operator is
$\{-1\}\cup[0,\infty)$,
and $-1$ is a simple eigenvalue with the $L^2$ normalized eigenfunction
\begin{equation}\label{eq:ef-u0}
u_0(\tau)=\sqrt{2}\,\exp(-\tau)\,.
\end{equation}

 %This operator will appear in Sections \ref{sec:ub}, \ref{sec:Gr}
%and \ref{sec:wkb}.

\subsection{Harmonic oscillator on the half-line}

The key element in the proof of Theorem~\ref{thm:mag-Lap} is the analysis of the harmonic oscillator
\begin{equation}\label{eq:Hr}
\Hr=-\frac{d^2}{d\tau^2}+(\zeta t-\xi)^2\quad{\rm in~}L^2(\R_+)\,,%\mathcal H_{{\rm harm},\zeta}=
\end{equation}
with domain
$$D\big(\Hr\big)=\{u\in H^1(\R_+)~:~\tau^ku\in L^2(\R_+),~k\in\{1,2\},~u'(0)=-u(0)\}\,.$$
Here $\zeta>0$ and $\xi\in\R$ are two parameters. Let us denote by $(\lambda_n(\Hr))$ the increasing sequence of the eigenvalues of $\Hr$ counting multiplicities. We will study the aymptotic behavior of the eigenvalue
\begin{equation}\label{eq:ev-Hr}
\lambda_1(\Hr)=\inf\Big(\sigma\big(\Hr\big)\Big)\,,
\end{equation}  
as $\zeta\to0$.

By comparison with the operator in \eqref{defH00}, we get:

\begin{lem}\label{lem:Hr*}
For all $\zeta>0$ and $\xi\in\R$, it holds,
$$\lambda_1(\Hr)\geq -1\quad{\rm and}\quad\lambda_2(\Hr)\geq 0\,.$$
\end{lem}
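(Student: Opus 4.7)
The plan is to deduce the lemma from a direct form comparison with the unperturbed operator $\mathcal H_{0,0}$ of Section~2.1, together with the min-max principle. The key observation is that the Robin boundary condition is the \emph{same} for both $\Hr$ and $\mathcal H_{0,0}$, so the two operators share a common form domain (the domain of the Robin Laplacian intersected with the weighted $L^2$ space that makes the potential square-integrable), and the potential $(\zeta\tau-\xi)^2$ appears only as a non-negative multiplication term in the quadratic form.

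First, I would write the quadratic form associated with $\Hr$ explicitly, namely
\begin{equation*}
q_{\zeta,\xi}(u)=\int_0^\infty\!\!\big(|u'(\tau)|^2+(\zeta\tau-\xi)^2|u(\tau)|^2\big)\,d\tau-|u(0)|^2,
\end{equation*}
obtained by integrating by parts and using the boundary condition $u'(0)=-u(0)$. The analogous form for $\mathcal H_{0,0}$ is the same object without the potential term. Since $(\zeta\tau-\xi)^2\geq 0$ pointwise, one has $q_{\zeta,\xi}(u)\geq q_{0,0}(u)$ for every $u$ in the (common) form domain.

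Next, I would invoke the min-max characterization of the eigenvalues (more generally, of the min-max values at the bottom of the spectrum). Pointwise domination of the quadratic forms yields $\lambda_n(\Hr)\geq \lambda_n(\mathcal H_{0,0})$ for every $n\geq 1$. From the explicit description of the spectrum of $\mathcal H_{0,0}$ recalled in \eqref{defH00}, namely $\sigma(\mathcal H_{0,0})=\{-1\}\cup[0,\infty)$ with $-1$ a simple eigenvalue, the min-max values are $\lambda_1(\mathcal H_{0,0})=-1$ and $\lambda_2(\mathcal H_{0,0})=0$ (the latter being the bottom of the essential spectrum). Substituting these values gives exactly the two inequalities claimed.

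I do not anticipate a real obstacle: the argument is entirely a monotonicity argument for quadratic forms, and the only mild subtlety is to check that $\Hr$ and $\mathcal H_{0,0}$ can indeed be compared on a common dense form core (for instance, $C_c^\infty(\overline{\R_+})$ satisfying the Robin condition at $0$ is a form core for both). Once this is noted, the min-max argument applies verbatim without any further analysis specific to the parameters $\zeta,\xi$.
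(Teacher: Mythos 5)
Your proof is correct and is exactly the comparison the paper intends: the paper's entire proof of this lemma is the single sentence ``By comparison with the operator in \eqref{defH00}, we get:'', i.e.\ form monotonicity $q_{\zeta,\xi}\geq q_{0,0}$ plus min-max and the spectrum $\{-1\}\cup[0,\infty)$ of $\mathcal H_{0,0}$. One small inaccuracy: the form domains are nested rather than equal (the form domain $B^1(\R_+)$ of $\Hr$ sits inside $H^1(\R_+)$, the form domain of $\mathcal H_{0,0}$), and a form core need not satisfy the Robin condition at $0$ since that condition is encoded by the boundary term $-|u(0)|^2$, not by a domain restriction; neither point affects the validity of the min-max comparison.
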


The lower bound in Lemma~\ref{lem:Hr*} can be improved as follows:

\begin{lem}\label{lem:Hr}
There exists a universal constant $A_0>0$ such that, if $0<\zeta<1$ and $|\xi|\geq A_0\zeta$, then
$$\lambda_1(\Hr)\geq -1+\frac32\zeta^2\,.$$
\end{lem}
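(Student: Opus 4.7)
The guiding idea is Temple's variational inequality applied to $\Hr$ with the trial vector $u_0$, which turns the spectral-gap information $\lambda_2(\Hr)\ge 0$ from Lemma~\ref{lem:Hr*} into a quantitative refinement of the trivial lower bound $\lambda_1(\Hr)\ge -1$. Recall Temple in the standard form: for any self-adjoint $T$ with simple lowest eigenvalue $\lambda_1$, next spectral point $\lambda_2$, and normalized trial $\psi$ with $E:=\langle T\psi,\psi\rangle<\lambda_2$,
\begin{equation*}
\lambda_1\ge E-\frac{\|(T-E)\psi\|^2}{\lambda_2-E}.
\end{equation*}
With $T=\Hr$, $\psi=u_0$, $\lambda_2=0$, the identity $(\mathcal H_{0,0}+1)u_0=0$ collapses $(T-E)u_0$ to $(V-V_{00})u_0$, where $V(\tau):=(\zeta\tau-\xi)^2$ and $V_{00}:=\langle Vu_0,u_0\rangle$, so everything reduces to moment computations against the exponential weight $u_0^2=2e^{-2\tau}$.

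I would first dispatch the sign $\xi\le -A_0\zeta$ without Temple, using the pointwise identity $(\zeta\tau-\xi)^2=(\zeta\tau+|\xi|)^2\ge \xi^2\ge A_0^2\zeta^2$ on $\R_+$ combined with Lemma~\ref{lem:Hr*} to conclude $\lambda_1(\Hr)\ge -1+A_0^2\zeta^2\ge -1+\tfrac32\zeta^2$ once $A_0\ge\sqrt{3/2}$. In the harder case $\xi\ge A_0\zeta>0$, the Gamma-function moments $\int_0^\infty s^n e^{-s}\,ds=n!$ give explicitly
\begin{equation*}
V_{00}=(\xi-\tfrac\zeta 2)^2+\tfrac{\zeta^2}4,\qquad \|(V-V_{00})u_0\|^2=\zeta^2\bigl[(\xi-\zeta)^2+\tfrac{\zeta^2}4\bigr],
\end{equation*}
and the pivotal algebraic observation to isolate is the variance bound $\|(V-V_{00})u_0\|^2\le \zeta^2 V_{00}$, which reduces to the elementary comparison $(\xi-\zeta)^2\le(\xi-\zeta/2)^2$ and therefore holds whenever $\xi\ge 3\zeta/4$, comfortably covered by any $A_0\ge 1$.

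Feeding the variance estimate into Temple (valid while $V_{00}<1$, i.e.\ $E<\lambda_2$) delivers
\begin{equation*}
\lambda_1(\Hr)\ge -1+V_{00}\cdot\frac{1-V_{00}-\zeta^2}{1-V_{00}},
\end{equation*}
and a short elementary computation using the a priori lower bound $V_{00}\ge(A_0-\tfrac12)^2\zeta^2+\tfrac14\zeta^2$ then shows that any sufficiently large universal $A_0$ pushes this right-hand side above $-1+\tfrac32\zeta^2$. The main obstacle, and the most delicate bookkeeping, is the residual regime where $V_{00}\ge 1$: this forces $\xi$ to be bounded below by a universal positive constant so that Temple with the trial $u_0$ becomes useless, but here I would instead use a direct Feshbach--Young argument built on the same orthogonal decomposition $f=cu_0+v$ with $v\perp u_0$, combining $q_0(v)\ge 0$, $\int V|v|^2\ge 0$, and the same key variance estimate $\|(V-V_{00})u_0\|^2\le \zeta^2 V_{00}$ through Cauchy--Schwarz and Young's inequality to close the bound with one universal $A_0$ across every $\zeta\in(0,1)$.
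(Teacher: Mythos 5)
Your route is genuinely different from the paper's. The paper observes the algebraic identity
\begin{equation*}
|u'|^2+|(\zeta\tau-\xi)u|^2-|u(0)|^2\delta_0
=(1-\zeta^2)\bigl(|u'|^2-|u(0)|^2\delta_0\bigr)+\zeta^2\bigl(|u'|^2+|(\tau-\zeta^{-1}\xi)u|^2-|u(0)|^2\delta_0\bigr),
\end{equation*}
which by the min--max principle yields $\lambda_1(\Hr)\geq -(1-\zeta^2)+\zeta^2\mu(\zeta^{-1}\xi)$, and then invokes $\lim_{A\to-\infty}\mu(A)=\infty$ and $\lim_{A\to+\infty}\mu(A)=1$ to force $\mu(\zeta^{-1}\xi)\geq\tfrac12$ for $|\xi|\geq A_0\zeta$ with $A_0$ large, giving the claim immediately and \emph{uniformly} in $\zeta\in(0,1)$. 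Your Temple approach has the merit of being self-contained (no black-box $\mu(A)$ asymptotics), and your computations of $V_{00}$, $\|(V-V_{00})u_0\|^2$, the variance bound $\|(V-V_{00})u_0\|^2\le\zeta^2V_{00}$ for $\xi\ge\tfrac34\zeta$, and the dispatch of $\xi\le-A_0\zeta$ are all correct.

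However, there is a genuine gap. Temple with trial $u_0$, together with $\lambda_2\ge0$ and your variance bound, gives
\begin{equation*}
\lambda_1(\Hr)\geq -1+V_{00}\cdot\frac{1-V_{00}-\zeta^2}{1-V_{00}},
\end{equation*}
and you need the right-hand side to reach $-1+\tfrac32\zeta^2$. This \emph{fails} for $\zeta$ bounded away from $0$, regardless of the choice of $A_0$, and not only in the regime $V_{00}\ge1$. Indeed, for $\zeta=\tfrac12$ the maximum of $V\mapsto V\frac{1-V-\zeta^2}{1-V}$ over $V\in(0,1-\zeta^2)$ is roughly $0.25$, strictly below the target $\tfrac32\zeta^2=0.375$; and increasing $A_0$ only pushes $V_{00}$ up (since $V_{00}\ge[(A_0-\tfrac12)^2+\tfrac14]\zeta^2$), which either moves $V$ past its optimum or out of the Temple window $V_{00}<1$. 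So the failure regime is not a thin residual set but covers all $\zeta$ past a threshold depending inversely on $A_0$. Moreover, your proposed repair --- a Feshbach-type argument from the decomposition $f=cu_0+v$, $v\perp u_0$, using $\langle T_0v,v\rangle\ge0$, $\langle Vv,v\rangle\ge0$, and the variance bound via Cauchy--Schwarz and Young --- is, once carried out, \emph{equivalent} to Temple: writing $a=1-V_{00}$, $b=\|(V-V_{00})u_0\|$, the infimum of $(-a)|c|^2-2b|c|\|v\|$ over $|c|^2+\|v\|^2=1$, after Young, is precisely $-a-b^2/a$, i.e.\ the same rational function. Because you only know the bottom of the restricted block on $u_0^\perp$ is $\ge0$ (not $>0$ with a quantitative gap), no reorganization of these ingredients gives more than Temple. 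Closing the $\zeta$-uniformity in $(0,1)$ therefore requires a different idea --- essentially what the paper does by splitting off a $\zeta^2$-fraction of the quadratic form and appealing to the $A\to\pm\infty$ behavior of $\mu(A)$.
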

\begin{proof}
Let $u$ be an $L^2$ normalized ground state of the operator $\Hr$. Let us write,
\begin{align*}
\lambda_1(\Hr)&=\int_0^\infty\Big(|u'|^2+|(\zeta\tau-\xi)u|^2\Big)\,d\tau-|u(0)|^2\\
&=(1-\zeta^2)\left(\int_0^\infty|u'|^2\,d\tau-|u(0)|^2\right)+\zeta^2
\left(\int_0^\infty\big(|u'|^2+|(\tau-\zeta^{-1}\xi)u|^2\,d\tau-|u(0)|^2\right)\,.
\end{align*} 
We know that the lowest eigenvalue of the operator in \eqref{defH00} is $-1$. Let $\mu(A)$ be the eigenvalue of the operator
$$-\frac{d^2}{d\tau^2}+(\tau-A)^2\quad{\rm in~}L^2(\R_+)\,,$$
with the boundary condition $u'(0)=-u(0)$. 

Now, it results from the min-max principle that
$$\lambda_1(\Hr)\geq -(1-\zeta^2)+\zeta^2\mu(\zeta^{-1}\xi)\,.$$
When $|\zeta^{-1}\xi|$ is sufficiently large, we get
$\mu(\zeta^{-1}\xi)\geq \frac12$, which is a result of the following two facts proved in \cite{Ka-jmp},
$$\lim_{A\to-\infty}\mu(A)=\infty\quad{\rm and}\quad \lim_{A\to\infty}\mu(A)=1\,.$$
\end{proof}

We will prove that:

\begin{thm}\label{thm:Hr}
Let  $n\in\mathbb N$. 
There exist $C>0$, a collection of vectors
$$\Big\{\mu_j=(\mu_{j,1},\mu_{j,2},\cdots,\mu_{j,2j+1})\in\R^{2j+1}\Big\}_{j=1}^n\,,$$
and a collection of vector functions,
 $$\Big\{u_j=(u_{j,1},u_{j,2},\cdots,u_{j,2j+1})\in \big(\mathcal S (\R_+)\big)^{2j+1}\Big\}_{j=1}^n\,,$$ 
 such that, if 
 $0<\zeta\leq 1$ and $|\xi|\leq 1$, then
$$\|(\Hr-\lambda_n)w_n\|_{L^2(\R_+)}\leq C\Big(\zeta^{2n+2}+\xi^{2n+2}\Big)\,,$$
where
$$w_n=u_0+\sum_{j=1}^n\sum_{p=0}^{2j}\zeta^{2j-p}\xi^pu_{j,p+1}\,,$$
$u_0$ is  the eigenfunction in \eqref{eq:ef-u0},
and
$$
\lambda_n=-1
+\sum_{j=1}^n\sum_{p=0}^{2j}\mu_{j,p+1}\zeta^{2j-p}\xi^{p}\,.
$$
Furthermore,
$$\mu_1=\Big(\mu_{1,1}=\frac12,\mu_{1,2}=-1  ,\mu_{1,3}=1\Big)\,.$$
\end{thm}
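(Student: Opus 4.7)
The plan is a Rayleigh--Schrödinger-type two-parameter perturbation expansion built on the decomposition
$$\Hr = \mathcal{H}_{0,0} + V_{\zeta,\xi}, \qquad V_{\zeta,\xi}(\tau) = \zeta^2\tau^2 - 2\zeta\xi\tau + \xi^2,$$
together with the fact from Section~2.1 that $u_0$ is the $L^2$-normalized ground state of $\mathcal{H}_{0,0}$ with simple isolated eigenvalue $-1$. Since $V_{\zeta,\xi}$ is homogeneous of total degree $2$ in $(\zeta,\xi)$, the eigenpair should expand in monomials $\zeta^{2j-p}\xi^{p}$ of even total degree $2j$, which matches the grouping of $u_{j,p+1}$ and $\mu_{j,p+1}$ in the statement. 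Plugging $(w_n,\lambda_n)$ into the rewritten eigenvalue relation $(\mathcal{H}_{0,0}+1)w=(\lambda+1)w-V_{\zeta,\xi}w$ and identifying the coefficient of $\zeta^{2j-p}\xi^{p}$ for $1\leq j\leq n$, $0\leq p\leq 2j$, produces a triangular hierarchy
\begin{equation*}
(\mathcal{H}_{0,0}+1)\,u_{j,p+1} \;=\; \mu_{j,p+1}\,u_0 - F_{j,p+1},
\end{equation*}
where $F_{j,p+1}$ depends only on the previously constructed data $\{u_{i,q},\mu_{i,q}:i<j\}$ together with the polynomial coefficients $\tau^2,-2\tau,1$ of $V_{\zeta,\xi}$; the base cases are $F_{1,1}=\tau^2 u_0$, $F_{1,2}=-2\tau u_0$, $F_{1,3}=u_0$.

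I would then solve this hierarchy inductively on $j$. Because $\ker(\mathcal{H}_{0,0}+1)=\mathbb{R}u_0$ under the Robin condition, the Fredholm alternative forces $\mu_{j,p+1}=\langle F_{j,p+1},u_0\rangle_{L^2(\R_+)}$, after which there is a unique solution $u_{j,p+1}\in u_0^{\perp}\cap D(\mathcal{H}_{0,0})$; in particular the Robin boundary condition is automatically satisfied. To verify that each $u_{j,p+1}$ lies in $\mathcal{S}(\R_+)$, I would use the ansatz $v=Q(\tau)e^{-\tau}$ with $Q$ polynomial: the ODE $-v''+v=P(\tau)e^{-\tau}$ then reduces to $-Q''+2Q'=P$, which admits a polynomial solution, so the reduced resolvent $(\mathcal{H}_{0,0}+1)^{-1}|_{u_0^{\perp}}$ preserves the class $\{P(\tau)e^{-\tau}\}$. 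Since $u_0$ and the three multipliers $\tau^2,-2\tau,1$ already belong to this class (modulo the $e^{-\tau}$ factor), the iterates $u_{j,p+1}$ stay in $\mathcal{S}(\R_+)$ throughout.

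The first-order coefficients are verified by direct computation. Using $u_0(\tau)=\sqrt{2}\,e^{-\tau}$ and $\int_{0}^{\infty}\tau^{k}e^{-2\tau}\,d\tau = k!/2^{k+1}$, the Fredholm formulas give
$$\mu_{1,1}=\langle \tau^2 u_0,u_0\rangle=\tfrac12,\qquad \mu_{1,2}=-2\langle \tau u_0,u_0\rangle=-1,\qquad \mu_{1,3}=\langle u_0,u_0\rangle=1,$$
as claimed. Finally, the quasimode bound follows from truncation: once the first $n$ layers have been cancelled, the residual $(\Hr-\lambda_n)w_n$ is a finite sum of fixed Schwartz functions in $\tau$ multiplied by monomials $\zeta^{2n+2-p}\xi^{p}$ with $0\leq p\leq 2n+2$, and for $0<\zeta\leq 1$, $|\xi|\leq 1$ the inequality $|\zeta^{2n+2-p}\xi^{p}|\leq\zeta^{2n+2}+\xi^{2n+2}$ yields the stated estimate. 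I expect the main obstacle to be purely organizational: correctly assembling $F_{j,p+1}$ from the earlier vector-valued data $(u_{i,q})_{q=1}^{2i+1}$ and $(\mu_{i,q})_{q=1}^{2i+1}$, in particular tracking how the cross term $-2\zeta\xi\tau$ couples neighboring indices $p$ and $p\pm 1$. The analytic content---simple-gap self-adjoint spectral theory of $\mathcal{H}_{0,0}$ and stability of $\{P(\tau)e^{-\tau}\}$ under the reduced resolvent---is otherwise routine.
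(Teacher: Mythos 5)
Your proposal is correct and follows essentially the same route as the paper: expand $(\Hr-\lambda_n)w_n$ in the monomial basis $\zeta^{2j-p}\xi^p$, fix $\mu_{j,p+1}=\langle F_{j,p+1},u_0\rangle$ by the Fredholm alternative so that the reduced resolvent of $\mathcal H_{0,0}+1$ can be inverted on $u_0^\perp$, and bound the residual by noting that the uncancelled monomials have total degree at least $2n+2$. The only point where you go beyond the paper is the explicit, self-contained check that the reduced resolvent preserves the class $\{P(\tau)e^{-\tau}\}$ via the polynomial ODE $-Q''+2Q'=P$ (together with the identity $\int_0^\infty P e^{-2\tau}\,d\tau = Q'(0)$ ensuring the Robin condition is compatible with orthogonality), whereas the paper just cites \cite[Lemma~A.5]{FH} for Schwartz stability.
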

\begin{proof}\

{\bf Step~1: Construction of $(\mu_1,u_1)$.}

Here we construct  $\mu_1=(\mu_{1,1},\mu_{1,2},\mu_{1,3})\in\R^3$  and $u_1=(u_{1,1},u_{1,2},u_{1,3})$ such that the conclusion of Theorem~\ref{thm:Hr} is valid for $n=1$.

Let us define
$$\lambda_1=-1+\mu_{1,1}\zeta^2+\mu_{1,2}\zeta\xi+\mu_{1,3}\xi^2\quad{\rm and}\quad
w_1(\tau)=u_0(\tau)+\zeta^2u_{1,1}(\tau)+\zeta\xi u_{1,2}(\tau)+\xi^2 u_{1,3}(\tau).$$
For simplicity of the notation, we will write $H=\Hr$. Notice that, since $\left(-\frac{d^2}{d\tau^2}+1\right)u_0=0$,
\begin{multline}\label{eq:step1}
(H-\lambda_1)w_1=%\left(-\frac{d^2}{d\tau^2}+1\right)u_0
\zeta^2\left[\left(-\frac{d^2}{d\tau^2}+1\right)u_{1,1}+(\tau^2-\mu_{1,1})u_0\right]
+\zeta\xi\left[\left(-\frac{d^2}{d\tau^2}+1\right)u_{1,2}-(2\tau+\mu_{1,2})u_0\right]\\
+\xi^2\left[\left(-\frac{d^2}{d\tau^2}+1\right)u_{1,3}+(1-\mu_{1,3})u_0\right]+R_1\,.
\end{multline}
The remainder $R_1$ is 
\begin{equation}\label{eq:Rstep1}
\begin{aligned}
R_1=&\zeta^4(\tau^2-\mu_{1,1})u_{1,1}+\zeta^3\xi\big[(\tau^2-\mu_{1,1})u_{1,2}+
(\mu_{1,2}-2\tau)u_{1,2}\big]\\
&+\zeta^2\xi^2\big[(1-\mu_{1,1})u_{1,3}+(\mu_{1,2}-2\tau)u_{1,2}+(1-\mu_{1,3})u_{1,1}\big]\\
&+\zeta\xi^3\big[(1-\mu_{1,3})u_{1,2}+(\mu_{1,2}-2\tau)u_{1,3}\big]
+\xi^4(1-\mu_{1,3})u_{1,3}\,.
\end{aligned}
\end{equation}
We choose the coefficients and the functions in \eqref{eq:step1} so that all the terms on the left side vanish. This is possible since the operator $-\frac{d^2}{d\tau^2}+1$ can be inverted in the orthogonal complement of the eigenfunction $u_0$. That way we choose,
\begin{align*}
&\mu_{1,1}=\int_0^\infty \tau^2|u_0(\tau)|^2\,d\tau=\frac12\,,\quad u_{1,1}=-\left(-\frac{d^2}{d\tau^2}+1\right)^{-1}\{(\tau^2-\mu_{1,1})u_0\}\\
&\mu_{1,2}=-\int_0^\infty 2\tau|u_0(\tau)|^2\,d\tau=-1\,,\quad u_{1,2}=\left(-\frac{d^2}{d\tau^2}+1\right)^{-1}\{(2\tau+\mu_{1,1})u_0\}\\
&\mu_{1,3}=\int_0^\infty |u_0(\tau)|^2\,d\tau=1\,,\quad u_{1,3}=-\left(-\frac{d^2}{d\tau^2}+1\right)^{-1}\{(1-\mu_{1,3})u_0\}\,.
\end{align*}
The operator $\left(-\frac{d^2}{d\tau^2}+1\right)^{-1}$ respects the Schwartz space $\mathcal S(\R_+)$. The proof of this is standard  (cf. \cite[Lemma~A.5]{FH}). Now, since $|\xi|\leq A\zeta$, we infer from  \eqref{eq:step1}
and \eqref{eq:Rstep1},
$$\|(H-\lambda_1)w_1\|_{L^2(\R_+)}\leq C(\zeta^4+\xi^4)\,.$$

{\bf Step~2: The iteration process.}

Suppose that we have constructed $(\mu_j)_{j=1}^n$ and $(u_j)_{j=1}^n$ such that
$$(H-\lambda_n)w_n=R_n+f_{n,\zeta,\xi}\,,$$
$R_n$ has the form
$$R_n=\sum_{p=0}^{2n+2}\zeta^{2n+2-p}\xi^p v_{n,p}\,,$$
for a collection $(v_{n,p})$ of Schwartz functions  that do not depend on $\zeta$ and $\xi$,
and the function $f_{n,\zeta,\xi}$ satisfies,
$$\|f_{n,\zeta,\xi}\|_{L^2(\R_+)}\leq C\big(\zeta^{2n+4}+\xi^{2n+4}\big)\,,$$
where $C$ is a constant independent of $\zeta$ and $\xi$.

We outline  the construction of
$$\mu_{n+1}=(\mu_{n+1,1},\mu_{n+1,2},\cdots,\mu_{n+1,2n+3})\in\R^{2n+3}\quad{\rm and}\quad
u_{n+1}=(u_{n+1,1},u_{n+1,2},\cdots,u_{n+1,2n+3})\,,$$
such that
$$(H-\lambda_{n+1})w_{n+1}=R_{n+1}+f_{n+1,\zeta,\xi}\,,$$
$R_{n+1}$ has the form
$$%\begin{equation}\label{eq:Rn+1}
R_{n+1}=\sum_{p=0}^{2n+4}\zeta^{2n+4-p}\xi^p v_{n+1,p}\,,$$
%\end{equation}
for a collection $(v_{n+1,p})$ of Schwartz functions  that do not depend on $\zeta$ and $\xi$,
and $f_{n+1,\zeta,\xi}$ satisfies,
$$\|f_{n+1,\zeta,\xi}\|_{L^2(\R_+)}\leq C\big(\zeta^{2n+6}+\xi^{2n+6}\big)\,,$$
where $C$ is a constant independent of $\zeta$ and $\xi$.

We expand $(H-\lambda_{n+1})w_{n+1}$ and rearrange the terms in the form,
\begin{align*}
(H-\lambda_{n+1})w_{n+1}=&\sum_{p=0}^{2n+2}\zeta^{2n+2-p}\xi^p
\left\{\left(-\frac{d^2}{d\tau^2}+1\right)u_{n+1,p+1}+v_{n,p}-\mu_{n+1,p+1}u_0\right\}\\
&+\sum_{p=0}^{2n+4}\zeta^{2n+4-p}\xi^p v_{n+1,p}+\sum_{p=0}^{2n+6}\zeta^{2n+6-p}\xi^pg_{n+1,p}\,,
\end{align*}
where the functions $v_{n+1,p}$ and $g_{n+1,p}$ are expressed in terms of the functions
$u_{j,q}$ and the real numbers $\mu_{j,q}$.

All what we have to do now is  to select the functions $u_{n+1,p+1}$ and the real numbers $\mu_{n+1,p+1}$ such that
$$\sum_{p=0}^{2n+2}\zeta^{2n+2-p}\xi^p
\left\{\left(-\frac{d^2}{d\tau^2}+1\right)u_{n+1,p+1}+v_{n,p}-\mu_{n+1,p+1}u_0\right\}=0\,.
$$
To that end, we select $\mu_{n+1,p+1}$ such that,
$$\mu_{n+1,p+1}=\int_0^\infty v_{n,p}\,u_0\,d\tau\,,$$
so that
$$v_{n,p}-\mu_{n+1,p+1}u_0~\perp~ u_0\quad{\rm in ~}L^2(\R_+)\,.$$
Finally, we define the function $u_{n+1,p+1}$ as follows,
$$u_{n+1,p+1}=-\left(-\frac{d^2}{d\tau^2}+1\right)^{-1}\big(v_{n,p}-\mu_{n+1,p+1}u_0\big)\,.$$
\end{proof}

As a consequence of Theorem~\ref{thm:Hr}, Lemma~\ref{lem:Hr*} and the spectral theorem, 
we get:
\begin{corollary}\label{corl:Hr}
Let $n\in\mathbb N$ and $A>0$. If $|\xi|\leq A\zeta$, then as $\zeta\to0_+$, the eigenvalue $\lambda_1(\Hr)$ satisfies,
$$\lambda_1(\Hr)=-1+ \sum_{j=1}^n\sum_{p=0}^{2j}\mu_{j,p+1}\zeta^{2j-p}\xi^{p} +\mathcal O(\zeta^{2n+2}).$$
\end{corollary}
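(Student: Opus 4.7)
The plan is to combine the quasimode produced by Theorem~\ref{thm:Hr} with the spectral gap information from Lemma~\ref{lem:Hr*}, via the spectral theorem for self-adjoint operators. First I would apply Theorem~\ref{thm:Hr} with the given $n$ to obtain $\lambda_n$ and $w_n$ satisfying $\norm{(\Hr-\lambda_n)w_n}_{L^2(\R_+)}\le C(\zeta^{2n+2}+\xi^{2n+2})$. Under the hypothesis $|\xi|\le A\zeta$ with $\zeta\le 1$, both the remainder and each monomial $\zeta^{2j-p}\xi^{p}$ with $j\ge 1$ that appears in the definition of $\lambda_n$ are controlled by powers of $\zeta$: the remainder is at most $C(1+A^{2n+2})\zeta^{2n+2}$, and in particular $\lambda_n=-1+\mathcal O(\zeta^2)$ as $\zeta\to 0_+$.

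Next I would verify that the quasimode has $L^2$ norm bounded away from zero. Since $u_0$ is $L^2$-normalised and each correction $u_{j,p+1}$ lies in $\mathcal S(\R_+)\subset L^2(\R_+)$, while every coefficient $\zeta^{2j-p}\xi^{p}$ with $j\ge 1$ satisfies $|\zeta^{2j-p}\xi^{p}|\le A^{p}\zeta^{2j}\to 0$, the triangle inequality gives $\norm{w_n}_{L^2(\R_+)}\to 1$ uniformly in $\xi$ in the admissible range. In particular $\norm{w_n}_{L^2(\R_+)}\ge \tfrac12$ for $\zeta$ sufficiently small.

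Then the spectral theorem for the self-adjoint operator $\Hr$ yields
$$\dist\big(\lambda_n,\sigma(\Hr)\big)\le \frac{\norm{(\Hr-\lambda_n)w_n}_{L^2(\R_+)}}{\norm{w_n}_{L^2(\R_+)}}\le C'\zeta^{2n+2},$$
for a constant $C'$ depending only on $A$ and $n$. Let $\tilde\lambda\in\sigma(\Hr)$ attain this distance. Since $\lambda_n\to -1$, we have $\tilde\lambda<0$ for sufficiently small $\zeta$; by Lemma~\ref{lem:Hr*}, the only element of $\sigma(\Hr)$ lying below $0$ is $\lambda_1(\Hr)$, because $\lambda_2(\Hr)\ge 0$. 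Hence $\tilde\lambda=\lambda_1(\Hr)$, and substituting the explicit expression for $\lambda_n$ yields the stated expansion.

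The argument is essentially a direct assembly of results already in place. The only point meriting real attention is the identification $\tilde\lambda=\lambda_1(\Hr)$, but this is immediate from the uniform spectral gap supplied by Lemma~\ref{lem:Hr*}, so no genuine obstacle remains beyond that bookkeeping.
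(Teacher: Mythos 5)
Your proof is correct and follows exactly the route the paper itself indicates: the corollary is stated as "a consequence of Theorem~\ref{thm:Hr}, Lemma~\ref{lem:Hr*} and the spectral theorem," and your argument assembles precisely those three ingredients. The only detail the paper leaves implicit — that the spectral point near $\lambda_n$ must be $\lambda_1(\Hr)$ because $\lambda_n\to -1$ while $\lambda_2(\Hr)\ge 0$, together with the lower bound $\|w_n\|_{L^2}\ge\tfrac12$ needed for the quasimode estimate — is exactly what you fill in, and you do so correctly.
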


\begin{definition}\label{eq:defn:en}
Let $n\in\mathbb N$ and $A_0$ be the universal constant in Lemma~\ref{lem:Hr}. For all $\zeta\in(0,1)$, we define the following quantity
\begin{equation}\label{eq:e-n}
e_n(\zeta)=\inf\{ f_n(\zeta,\xi)~:~{|\xi|\leq \zeta\max(A_0,1)}\}\,,
\end{equation} 
where
\begin{equation}\label{eq:f-n}
f_n(\zeta,\xi)=\sum_{j=2}^n\sum_{p=0}^{2n}\zeta^{2n-p}\xi^p\mu_{j,p+1}\,,
\end{equation}
and $(\mu_{j,p+1})$ are the constants in Theorem~\ref{thm:Hr}.
\end{definition}

\begin{rem}\label{rem:en}
Note that for $n=1$, $f_1(\zeta,\xi)=\frac12\zeta^2-\zeta\xi+\xi^2$, and
$$\min f_1(\zeta,\xi)=f_1\Big(\zeta,\frac12\zeta\Big)=\frac14\zeta^2\,.$$
Consequently, for all $n\in\mathbb N$, as $\zeta\to0_+$, 
$$e_n(\zeta)=\frac14\zeta^2+\mathcal O(\zeta^4)\,.$$
\end{rem}

\subsection{A family of operators in a weighted space}\label{sec:Hw}

Here we will study an operator that arises in many papers concerned with the semi-classical magnetic Laplacian (cf. \cite{HM, Ka-jmp}). 
Let $h\in(0,1)$, $\zeta\in(0,1)$, $\xi\in\R$, $\beta\in\R$, $\delta\in(0,1/2)$, $m\geq 0$, $\sigma\in(0,1)$, $M>0$ and
$|\beta|h^{\frac12-\delta}<\frac13$.

Consider the Hilbert-space
$$L^2\Big((0,h^{-\delta}),\tilde a\,d\tau\Big)\,,\quad \tilde a=1-(\beta+mh^\sigma )h^{1/2}\tau\,,\quad \|\cdot\|_{L^2\big((0,h^{-\delta}),\tilde a\,d\tau\big)}=\left(\int_{0}^{h^{-\delta}}|\cdot|^2\,\tilde a\,d\tau\right)^{1/2}\,,$$
and the self-adjoint
operator
\begin{equation}\label{eq:H0b}
\begin{aligned}
\Hw=&-\tilde a^{-1}\partial_\tau(\tilde a\partial_\tau)+(1+h^{1/2}\Delta_{\beta,\tau})\Big(\zeta\tau(1-\frac12\beta h^{1/2}\tau)-\xi\Big)^2\\
=&
-\frac{d^2}{d\tau^2}+(\zeta\tau-\xi)^2+(\beta+mh^\sigma) h^{1/2}\big(1-(\beta+mh^\sigma)
h^{1/2}\tau\big)^{-1}\frac{d}{d\tau}\\
&+h^{1/2}\Delta_{\beta,\tau}(\zeta\tau-\xi)^2+\beta h^{1/2}\zeta\tau^2(1+h^{1/2}\Delta_{\beta,\tau})\Big(-(\zeta \tau-\xi)+\frac14\beta h^{1/2}\zeta\tau^2\Big)
\,,
\end{aligned}
\end{equation}
in $L^2\Big((0,h^{-\delta}),\tilde a\,d\tau\Big)$. Here $\Delta_{\beta,\tau}$ is a function of $(\beta,\tau)$ and satisfies, for all $h\in(0,1)$,
$$|\Delta_{\beta,\tau}|\leq M(\beta+1)\tau\,.$$
The  domain of the operator $\Hw$ is
\begin{equation}\label{eq:domH0b}
D\Big(\Hw\Big)=\{u\in H^2((0,h^{-\delta}))~:~u'(0)=-u(0)\quad{\rm and}\quad u(h^{-\delta})=0\}\,.
\end{equation}
The operator $\Hw$ is the Friedrichs extension in
$L^2\Big((0,h^{-\delta});\tilde a\,d\tau\Big)$  associated
with the quadratic form
\begin{multline*}
q_{\zeta,\beta,h,\xi}(u)=\\\int_0^{h^{-\delta}}\left(|u'(\tau)|^2+
(1+h^{1/2}\Delta_{\beta,\tau})\Big|\Big(\zeta\tau(1-\frac12\beta h^{1/2}\tau)-\xi\Big)u\Big|^2\right)\big(1-(\beta+mh^\sigma)
h^{1/2}\tau\big)\,d\tau
-|u(0)|^2\,.\end{multline*}
The operator $\Hw
$ is with compact resolvent. The strictly increasing
sequence of the eigenvalues of $\Hw$ is denoted by
$(\lambda_n(\Hw))_{n\in \mathbb N}$.

\subsubsection{Harmonic oscillator on an interval}

Here we study the operator in \eqref{eq:H0b} for $\beta=0$, $m=0$ and $\Delta_{\beta,\tau}=0$ which becomes the harmonic oscillator 
\begin{equation}\label{eq:H0b-b=0}
\mathcal H_{\zeta,0,\xi,h}=-\frac{d^2}{d\tau^2}+(\zeta\tau-\xi)^2\quad{\rm in~}L^2\big((0,h^{-\delta});d\tau)\,,
\end{equation} 
and with the boundary conditions $u'(0)=-u(0)$ and $u(h^{-\delta})=0$.

By comparison of the quadratic forms of the operators $\Hw$ and $\Hwo$, we get that  the spectrum of $\Hw$ is localized near that of
 $\Hw$ as $h$ goes to $0$. This gives us a rough  information  about the spectrum of the operator $\Hw$  precisely stated in: 

\begin{lem}\label{lem:H0b}
Let $0<c_1<c_2$, $0<\epsilon\leq \frac14$ and $\delta\in(0,1)$. 
There exist two constants $C>0$ and $h_0\in(0,1)$ such that, for all
$$h\in(0,h_0),\quad c_1 h^\epsilon\leq \zeta\leq c_2 h^\epsilon\quad{\rm and}~|\beta|+m\leq c_2\,,$$
 it holds the following.
\begin{enumerate}
\item $\lambda_2(\Hw)\geq -C|\beta|h^{\frac12-\delta}$.
\item If $\epsilon=\frac14$, $\delta<\frac14$  and $|\xi|\geq (3c_2+2) h^{\frac14-\delta}$, then
$$\lambda_1(\Hw)\geq -1+h^{\frac12-2\delta}\,.$$
\item Let $A_0$ be the universal constant in Lemma~\ref{lem:Hr}. If $\epsilon<\frac14$, $\delta<\frac12-2\epsilon$ and $|\xi|\geq A_0\zeta$, then
$$\lambda_1(\Hw)\geq -1+\zeta^2\,.$$
\end{enumerate}
\end{lem}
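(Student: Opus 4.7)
The strategy is to compare $\Hw$ with the unperturbed operator $\Hwo$ from \eqref{eq:H0b-b=0}, establish the required lower bounds for $\Hwo$, and then transfer them by a quadratic-form perturbation. Expanding the weight $\tilde a=1-(\beta+mh^\sigma)h^{1/2}\tau$ and the shifted potential $[\zeta\tau(1-\tfrac12\beta h^{1/2}\tau)-\xi]^2=(\zeta\tau-\xi)^2-\beta h^{1/2}\zeta\tau^2(\zeta\tau-\xi)+\tfrac14\beta^2h\zeta^2\tau^4$, one sees that every term of $q_{\zeta,\beta,h,\xi}$ beyond $q_{\zeta,0,\xi,h}$ carries at least one explicit factor of $(\beta+mh^\sigma)h^{1/2}$ or $\beta h^{1/2}$, combined with a polynomial in $\tau\in(0,h^{-\delta})$. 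Using $|\beta|h^{1/2-\delta}<1/3$, $|\Delta_{\beta,\tau}|\leq M(\beta+1)\tau$, and Cauchy--Schwarz with a small parameter to redistribute the cross terms between the main form and a bulk remainder, I expect to obtain
$$q_{\zeta,\beta,h,\xi}(u)\geq\bigl(1-Ch^{1/2-\delta}\bigr)\,q_{\zeta,0,\xi,h}(u)-Ch^{1/2-\delta}\|u\|^2_{L^2(0,h^{-\delta})},$$
together with the norm equivalence $\|u\|^2_{L^2((0,h^{-\delta});\tilde a\,d\tau)}=(1+O(h^{1/2-\delta}))\|u\|^2_{L^2(0,h^{-\delta})}$.

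For part (1), extending a form-domain element of $\Hwo$ by zero past $\tau=h^{-\delta}$ produces an admissible test function for the half-line operator $\Hr$ from Section~2.2 (the Dirichlet condition at $h^{-\delta}$ is what keeps the extension in the form domain). Hence by min-max and Lemma~\ref{lem:Hr*}, $\lambda_2(\Hwo)\geq\lambda_2(\Hr)\geq 0$, and the perturbation bound above promotes this to $\lambda_2(\Hw)\geq-C|\beta|h^{1/2-\delta}$.

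For parts (2) and (3), the task reduces to lower-bounding $\lambda_1(\Hwo)$ by $-1$ plus a positive quantity that dominates the perturbation. In part (2), the hypothesis $|\xi|\geq(3c_2+2)h^{1/4-\delta}$ combined with $\zeta h^{-\delta}\leq c_2h^{1/4-\delta}$ forces $V(\tau)=(\zeta\tau-\xi)^2$ to be uniformly large on $(0,h^{-\delta})$: if $\xi<0$, $V$ is increasing and $V_{\min}=\xi^2\geq(3c_2+2)^2h^{1/2-2\delta}$; if $\xi>0$, then $\xi/\zeta>h^{-\delta}$ so $V$ is decreasing and $V_{\min}=(\xi-\zeta h^{-\delta})^2\geq(2c_2+2)^2h^{1/2-2\delta}$. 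Combined with $\int_0^{h^{-\delta}}|u'|^2\,d\tau-|u(0)|^2\geq-\|u\|^2$ coming from $\sigma(\mathcal H_{0,0})\supset\{-1\}$, this yields $\lambda_1(\Hwo)\geq-1+(2c_2+2)^2h^{1/2-2\delta}$; since $h^{1/2-\delta}=o(h^{1/2-2\delta})$ the perturbation is absorbed. In part (3), I would replay the splitting from the proof of Lemma~\ref{lem:Hr} at the form level,
$$q_{\zeta,0,\xi,h}(u)=(1-\zeta^2)\Bigl(\int_0^{h^{-\delta}}|u'|^2\,d\tau-|u(0)|^2\Bigr)+\zeta^2\Bigl(\int_0^{h^{-\delta}}\bigl(|u'|^2+(\tau-\zeta^{-1}\xi)^2|u|^2\bigr)d\tau-|u(0)|^2\Bigr),$$
using the half-line Robin bound on the first bracket and the bound $\mu(\zeta^{-1}\xi)\geq 1/2$ (valid as in Lemma~\ref{lem:Hr} since $|\zeta^{-1}\xi|\geq A_0$) on the second, to conclude $\lambda_1(\Hwo)\geq-1+\tfrac32\zeta^2$. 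The assumption $\delta<\tfrac12-2\epsilon$ renders $h^{1/2-\delta}$ negligible compared to $\zeta^2\gtrsim h^{2\epsilon}$.

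The main obstacle I anticipate is the quadratic-form comparison of the first paragraph. The cross terms $-\beta h^{1/2}\zeta\tau^2(\zeta\tau-\xi)$ and $h^{1/2}\Delta_{\beta,\tau}(\zeta\tau-\xi)^2$ must be split by Cauchy--Schwarz so that the part absorbed into $(\zeta\tau-\xi)^2|u|^2$ carries only a small constant, while the leftover is controlled by $h^{1/2-\delta}\|u\|^2$. The bookkeeping must keep the loss strictly below the gain in each of parts (2) and (3), which constrains the splitting parameter in terms of $\zeta$, $\xi$, and $\delta$.
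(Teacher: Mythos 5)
Your proposal is correct and follows essentially the same route as the paper: a quadratic-form comparison transferring bounds from $\Hwo$ to $\Hw$, the form-domain inclusion into $\Hr$ together with Lemma~\ref{lem:Hr*} for part (1), a direct lower bound on $(\zeta\tau-\xi)^2$ for part (2), and the $(1-\zeta^2)/\zeta^2$ splitting underlying Lemma~\ref{lem:Hr} for part (3). The cosmetic differences (your sign case analysis in part (2) versus the paper's one-line expansion $\xi^2-2\zeta\tau|\xi|$, and your replay of the Lemma~\ref{lem:Hr} splitting on the truncated interval versus the paper's invocation of $\lambda_1(\Hwo)\geq\lambda_1(\Hr)$ followed by Lemma~\ref{lem:Hr}) do not change the argument.
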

\begin{proof}~

{\bf Step~1.}

There exists a constant $C>0$ such that, for all $u\in
H^1((0,h^{-\delta}))$,
\begin{align*}
&\Big|q_{\zeta,\beta,h,\xi}(u)-q_{\zeta,\beta=0,h,\xi}(u)\Big|\leq C|\beta|h^{\frac12-\delta}\Big(q_{0,h}(u)+\|u\|^2_{L^2((0,h^{-\delta});d\tau)}\Big)\,\\
&\Big|\|u\|^2_{L^2((0,h^{-\delta});(1-\beta h^{1/2}\tau)d\tau)}-\|u\|^2_{L^2((0,h^{-\delta});d\tau)}\Big|\leq |\beta|h^{\frac12-\delta}\,\|u\|^2_{L^2((0,h^{-\delta});d\tau)}\,.
\end{align*}
The min-max principle yields, for all $n\in\mathbb N$,
\begin{equation}\label{eq:Hw=Hw0}
\Big|\lambda_n(\Hw)-\lambda_n(\Hwo)\Big|\leq C|\beta|h^{\frac12-\delta}
\Big(\big|\lambda_n(\Hwo)\big|+1\Big)\,.
\end{equation}
Since the form domain of the operator $\Hr$ contains that of the operator $\Hw$ (cf. \eqref{eq:Hr}), then the min-max principle yields
\begin{equation}\label{eq:Hw0=Hr}
\lambda_n(\Hwo)\geq \lambda_n(\Hr)\,.
\end{equation}
In particular, for $n=2$, Lemma~\ref{lem:Hr*} gives us the statement in the first item of Lemma~\ref{lem:H0b}.

{\bf Step~2.}

We estimate the quadratic form for the operator $\Hwo$ as follows, for all $u\in H^1(0,h^{-\delta})$,
$$q_{\zeta,0,\xi,h}(u)\geq \int_0^{h^{-\delta}}\Big(|u'(t)|^2+(-2c_2h^{\frac14-\delta}\xi+\xi^2)|u|^2\Big)\,d\tau-|u(0)|^2\,.$$
The min-max principle and Lemma~\ref{lem:Hr*} yield,
$$\lambda_1(\Hwo)\geq -1+(|\xi|-2c_2h^{\frac14-\delta})|\xi|\,.$$
We insert this into \eqref{eq:Hw=Hw0}. That way, for $|\xi|\geq (3c_2+2)h^{\frac14-\delta}$, we get the conclusion in the second item of Lemma~\ref{lem:H0b}.

{\bf Step~3.}

Using \eqref{eq:Hw=Hw0} and \eqref{eq:Hw0=Hr} for $n=1$, we get,
$$\lambda_1(\Hw)\geq\lambda_1(\Hwo)-Ch^{\frac12-\delta}\,.$$
Now, we assume that $|\xi|\geq A_0\zeta$, $\epsilon<\frac14$ and $\delta<\frac12-2\epsilon$. By applying  Lemma~\ref{lem:Hr} we get, for $h$ sufficiently small, the statement
in the third item in Lemma~\ref{lem:H0b}.
\end{proof}

\subsubsection{Lower bound for the principal eigenvalue of the operator $\Hw$}

In the next two propositions, we determine  refined lower bounds of the eigenvalue  $\lambda_1(\Hw)$. The bound is valid as $h\to0_+$ and is uniform with respect to the parameters $\xi$, $\zeta$ and $\beta$.

\begin{pro}\label{lem:H0b;l}
Let $0<c_1<c_2$,  $\sigma\in(0,1)$, $\epsilon\in(0,\frac14)$, $\delta\in(0,\frac12-2\epsilon)$ and $n\in\mathbb N$ be the smallest positive integer such that
$$(2n+2)\epsilon>\frac12\,.$$
There exist constants $C>0$ and $h_0\in(0,1)$ such that, for all
$$h\in(0,h_0)\,,\quad c_1h^\epsilon<\zeta\leq c_2h^\epsilon\,,\quad \xi\in\R\,,\quad |\beta|h^\delta<\frac13\,,\quad |\beta|+m\leq c_2\,,$$ it holds,
$$\lambda_1(\Hw)\geq -1+e_n(\zeta)-\beta h^{1/2}-Ch^r\,,$$
where
$$r=\min\big({(2n+2)\epsilon},\frac12+2\epsilon,\frac12+\sigma\big)\,.$$
\end{pro}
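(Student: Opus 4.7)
The plan is to split the analysis according to the size of $|\xi|$ relative to $\zeta$, and then, in the interesting regime, to reduce the weighted problem to the half-line harmonic oscillator $\Hr$ already analyzed in Corollary \ref{corl:Hr}. In the regime $|\xi| > \max(A_0,1)\zeta$, item (3) of Lemma \ref{lem:H0b} immediately yields $\lambda_1(\Hw) \geq -1 + \zeta^2$. Since $\zeta \geq c_1 h^\epsilon$ with $2\epsilon < 1/2$, and by Remark \ref{rem:en} one has $e_n(\zeta) = \tfrac14 \zeta^2 + O(\zeta^4)$, the gap $\zeta^2 - e_n(\zeta)$ is of order $\zeta^2 \gtrsim h^{2\epsilon}$, which dominates both $|\beta| h^{1/2}$ and $Ch^r$ because $2\epsilon < 1/2 \leq r$. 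So the claimed bound holds trivially in this range.

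In the complementary regime $|\xi| \leq \max(A_0,1)\zeta$, I would proceed by conjugation. Setting $v = \tilde a^{1/2} u$, which is an isometry from $L^2((0,h^{-\delta});\tilde a\,d\tau)$ onto $L^2((0,h^{-\delta});d\tau)$, transforms $\Hw$ into an unweighted self-adjoint operator on $(0,h^{-\delta})$ carrying (i) the Dirichlet condition at $\tau = h^{-\delta}$, (ii) a modified Robin condition $v'(0) = -\bigl(1 + \tfrac12(\beta + mh^\sigma)h^{1/2}\bigr)v(0)$ coming from the conjugation, and (iii) additional lower-order terms. Since $\tilde a$ is affine, the conjugation generates only a constant potential $\tfrac14(\tilde a'/\tilde a)^2 = O(h)$, which is absorbable in the $h^{1/2+\sigma}$ error. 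The shifted Robin coefficient, compared with the unperturbed coefficient $-1$ whose associated Robin Laplacian has ground-state energy $-1$, produces the explicit boundary contribution $-\beta h^{1/2}$ (plus $O(h^{1/2+\sigma})$ from the $m h^\sigma$ piece and $O(h)$ from squaring). The potential perturbations written in \eqref{eq:H0b} are controlled by $Ch^{1/2}(\zeta\tau^2 + \Delta_{\beta,\tau}(\zeta\tau-\xi)^2)$; since low-energy states concentrate near $\tau = 0$ with moments $\int \tau^k|v|^2\,d\tau = O(1)$, these contribute at most $O(h^{1/2}\zeta^2) = O(h^{1/2+2\epsilon})$ to the Rayleigh quotient. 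An Agmon-type exponential decay argument, justified by Lemma~\ref{lem:Hr*} and the hypothesis $|\beta|h^\delta < 1/3$, shows that the truncation from $\R_+$ to $(0,h^{-\delta})$ costs only $O(h^\infty)$. What remains is exactly $\Hr$ on the half-line, whose principal eigenvalue satisfies, by Corollary \ref{corl:Hr} and the definition of $e_n(\zeta)$, the bound $\lambda_1(\Hr) \geq -1 + f_n(\zeta,\xi) - C\zeta^{2n+2} \geq -1 + e_n(\zeta) - Ch^{(2n+2)\epsilon}$ in the regime $|\xi| \leq \max(A_0,1)\zeta$. Collecting the three error scales $h^{(2n+2)\epsilon}$, $h^{1/2+2\epsilon}$, $h^{1/2+\sigma}$ produces the announced lower bound.

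The main obstacle lies in the bookkeeping in the conjugation step: the weight $\tilde a$ simultaneously alters the quadratic form, the $L^2$-normalization, and the boundary condition, and each of these alterations has a $\beta h^{1/2}$-linear piece. The three contributions must be tracked with the same precision so that precisely the boundary-driven term $-\beta h^{1/2}$ survives while the $\beta h^{1/2}$-linear contribution of the $L^2$ normalization cancels against the one coming from the first-order term in \eqref{eq:H0b}. The substitution $v = \tilde a^{1/2}u$ is the cleanest device for this cancellation, because it transfers the entire $\beta h^{1/2}$ dependence of the weight into a single, easily analyzed shift of the Robin parameter; verifying that the remaining perturbations genuinely fit into the three error terms constituting $h^r$ is the technical heart of the argument.
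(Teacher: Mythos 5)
Your proposal takes a genuinely different route from the paper, and the difference matters. The paper's proof is a quasimode argument: it inserts the explicit trial state $f(\tau)=\chi(\tau h^{\delta})w_n(\tau)$ into the weighted space, shows from Theorem~\ref{thm:Hr} and the expansion of $\Hw$ in \eqref{eq:H0b} that $\|(\Hw-(-1+f_n(\zeta,\xi)-\beta h^{1/2}))f\|\leq Ch^{r}$, and then invokes the spectral theorem together with the gap estimate $\lambda_2(\Hw)\geq -C|\beta|h^{1/2-\delta}$ from Lemma~\ref{lem:H0b}(1) to conclude that the nearby eigenvalue is $\lambda_1(\Hw)$. The quasimode stays in the original space, so the weight, the first-order drift term, and the Robin condition are handled simultaneously by a single residual estimate, and the lower bound follows directly with $f_n(\zeta,\xi)\geq e_n(\zeta)$. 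Your conjugation $v=\tilde a^{1/2}u$ is a legitimate alternative framing: it removes the first-order term and relocates the linear-in-$\beta h^{1/2}$ dependence into a shift of the Robin parameter, which a Hellmann--Feynman computation does turn into $-\beta h^{1/2}$.

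However, as written your argument has a genuine gap at the step where you bound the potential perturbations. For a \emph{lower} bound on $\lambda_1(\Hw)$ you cannot invoke ``low-energy states concentrate near $\tau=0$ with moments $\int\tau^k|v|^2\,d\tau=O(1)$'' without first proving it: that moment control requires an a priori Agmon estimate for the (unknown) ground state of the perturbed operator, which itself needs a preliminary bound on $\lambda_1$ to start the bootstrap. The paper avoids this entirely because the quasimode is explicit. A second, related gap: after the conjugation, what remains is not ``exactly $\Hr$'' but $\Hr$ with the Robin coefficient shifted from $1$ to $1+\frac12(\beta+mh^{\sigma})h^{1/2}$. Corollary~\ref{corl:Hr} is stated only for the unit Robin parameter, so to close the argument you would need either a parameterized version of that corollary or a rigorous two-sided perturbation estimate converting the Robin shift into the eigenvalue shift $-\beta h^{1/2}+O(h)$ uniformly in $\xi$; the first-order Hellmann--Feynman heuristic you give does not by itself yield the lower bound with the precise remainder $r=\min\bigl((2n+2)\epsilon,\tfrac12+2\epsilon,\tfrac12+\sigma\bigr)$. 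The quasimode-plus-gap route bypasses both difficulties and is the cleaner path to that error exponent.
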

\begin{proof}
Let $A_0$ be the universal constant in Lemma~\ref{lem:Hr}. In light of the results in Remark~\ref{rem:en} and Lemma~\ref{lem:H0b},  the lower bound in Lemma~\ref{lem:H0b;l} holds true for $|\xi|\geq A_0\zeta$. It remains to prove the lower bound for $|\xi|\leq A_0\zeta$.

 Consider the function
$$f(\tau)=\chi(\tau\, h^{\delta})\,w_n(\tau)\,,$$
where $w_n(\tau)$ is the function in Theorem~\ref{thm:Hr} and $\chi\in C_c^\infty([0,\infty))$ satisfies
$$0\leq \chi\leq 1~{\rm in~}[0,\infty)\,,\quad \chi=1~{\rm
in~}[0,1/2)\quad{\rm and}\quad\chi=0~{\rm in~}[1/2,\infty)\,.$$
Clearly, the function $f$ is in the domain of the operator $\Hw$. It is easy to check that,
$$\Big|\|f\|^2_{{L^2((0,h^{-\rho});(1-\beta
h^{1/2}\tau)d\tau)}}-1\Big|\leq C\zeta^2\,.$$
In light of Theorem~\ref{thm:Hr} and the expression of  $\Hw f$ in \eqref{eq:H0b}, we may write,
\begin{equation}\label{eq:en-tf-n*}
\begin{aligned}
\|\{\Hw-(-1+f_n(\zeta,\xi)-\beta
h^{1/2})\}f\|_{{L^2((0,h^{-\rho});(1-\beta h^{1/2}\tau)d\tau)}}&\leq
C(\zeta^{2n+2}+h^{\frac12+2\epsilon}+h^{\frac12+\sigma})\\
&\leq Ch^r\,.
\end{aligned}
\end{equation}
By the spectral theorem, we deduce that there exists an eigenvalue
$\lambda(\Hw)$ of $\Hw$ such that
$$\Big|\lambda(\Hw)-(-1+f_n(\zeta,\xi))\Big|\leq
C\,h^r\,.$$
Now, Lemma~\ref{lem:H0b} tells us that
$$\lambda_1(\Hw)=\lambda(\Hw)\,.$$
Finally, by definition of $e_n(\zeta)$ in \eqref{eq:e-n}, we have $f_n(\zeta,\xi)\geq e_n(\zeta)$.
\end{proof}

\begin{pro}\label{lem:H0b;l*}
Let $0<c_1<c_2$, $\sigma\in(0,1)$ and $\delta\in(0,\frac18)$. 
There exist constants $C>0$ and $h_0\in(0,1)$ such that, for all
$$h\in(0,h_0)\,,\quad c_1h^{\frac14}\leq \zeta\leq c_2 h^{\frac14}\,,\quad \xi\in\R\,,\quad |\beta|h^\delta<\frac13\,,\quad |\beta|+m\leq c_2\,,$$ it holds,
$$\lambda_1(\Hw)\geq -1+\frac14\zeta^2-\beta h^{1/2}-Ch^r\,,$$
where
$$r=\min\Big({1-4\delta},\frac12+\sigma\Big)\,.$$
\end{pro}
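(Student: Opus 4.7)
My plan is to adapt the proof of Proposition \ref{lem:H0b;l} to the borderline case $\epsilon=\tfrac14$, where item (3) of Lemma \ref{lem:H0b} is unavailable. The strategy is the same quasi-mode / spectral-theorem argument, but now with $n=1$ in Theorem \ref{thm:Hr}. The key point is that for $\zeta\asymp h^{1/4}$ we have $\zeta^4\asymp h$, which already fits inside the target error $h^r$ since $r=\min(1-4\delta,\tfrac12+\sigma)<1$.

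I would first dispose of the large-$\xi$ regime $|\xi|\geq(3c_2+2)h^{1/4-\delta}$ using Lemma \ref{lem:H0b}(2) (applicable since $\delta<\tfrac18<\tfrac14$), which gives $\lambda_1(\Hw)\geq-1+h^{1/2-2\delta}$. Because $h^{1/2-2\delta}/h^{1/2}\to\infty$ as $h\to 0_+$ while $\tfrac14\zeta^2$ and $|\beta|h^{1/2}$ are $O(h^{1/2})$, the desired inequality follows for $h$ small (the $-Ch^r$ on the right only helps).

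For the remaining range $|\xi|\leq(3c_2+2)h^{1/4-\delta}$, which for small $h$ satisfies the hypothesis $|\xi|\leq 1$ of Theorem \ref{thm:Hr}, I would use the quasi-mode $f(\tau)=\chi(\tau h^\delta)w_1(\tau)$, with $w_1=u_0+\zeta^2 u_{1,1}+\zeta\xi\,u_{1,2}+\xi^2 u_{1,3}$ from Theorem \ref{thm:Hr} (case $n=1$) and the cutoff $\chi$ from the proof of Proposition \ref{lem:H0b;l}. Expanding $\Hw f$ via \eqref{eq:H0b}, the dominant contribution $(\Hr-(-1+f_1(\zeta,\xi)))w_1$ is bounded by $C(\zeta^4+\xi^4)\leq Ch^{1-4\delta}$; the first-order perturbation $(\beta+mh^\sigma)h^{1/2}(1-(\beta+mh^\sigma)h^{1/2}\tau)^{-1}\tfrac{d}{d\tau}$ acting on $u_0$ produces the claimed shift $-\beta h^{1/2}u_0$ via $u_0'=-u_0$, with remainder $O(h^{1/2+\sigma}+h^{1-2\delta})$; and the remaining $\Delta_{\beta,\tau}$ and $\beta h^{1/2}\zeta\tau^2$ corrections contribute $O(h^{1/2+2\epsilon})=O(h)$. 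The cutoff error is $\mathcal O(h^\infty)$ by the Schwartz decay of the $u_{1,j}$. Altogether,
$$\bigl\|\bigl\{\Hw-(-1+f_1(\zeta,\xi)-\beta h^{1/2})\bigr\}f\bigr\|_{L^2((0,h^{-\delta});\tilde a\,d\tau)}\leq Ch^r.$$

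The spectral theorem then produces an eigenvalue of $\Hw$ within $Ch^r$ of $-1+f_1(\zeta,\xi)-\beta h^{1/2}$, a value below $-\tfrac34$ for $h$ small. Since Lemma \ref{lem:H0b}(1) gives $\lambda_2(\Hw)\geq -C|\beta|h^{1/2-\delta}\geq -\tfrac12$ for $h$ small, this eigenvalue must be $\lambda_1(\Hw)$. Noting $f_1(\zeta,\xi)=\tfrac12\zeta^2-\zeta\xi+\xi^2\geq\tfrac14\zeta^2$ (Remark \ref{rem:en}) concludes the bound. The only technical subtlety, relatively mild, is ensuring the quasi-mode error remains uniform over the enlarged $\xi$-range $|\xi|\lesssim h^{1/4-\delta}$ (rather than $|\xi|\lesssim\zeta$ as in Proposition \ref{lem:H0b;l}); this is precisely where the hypothesis $\delta<\tfrac18$ enters, guaranteeing $\xi^4\leq h^{1-4\delta}\leq h^r$.
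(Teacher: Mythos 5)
Your proposal is correct and follows essentially the paper's own proof: dispose of the regime $|\xi|\geq(3c_2+2)h^{1/4-\delta}$ via Lemma~\ref{lem:H0b}(2), build the truncated quasi-mode from $w_1$ of Theorem~\ref{thm:Hr}, apply the spectral theorem, and invoke Lemma~\ref{lem:H0b}(1) to identify the nearby eigenvalue as $\lambda_1$. The only slip is minor and inconsequential: the corrections involving $\Delta_{\beta,\tau}(\zeta\tau-\xi)^2$ and $\beta h^{1/2}\zeta\tau^2(\zeta\tau-\xi)$ are actually of size $O(h^{1-2\delta})$ and $O(h^{1-\delta})$ rather than $O(h)$, because $|\xi|$ may reach $(3c_2+2)h^{1/4-\delta}$ rather than merely $O(\zeta)$, but both remain $\leq h^{1-4\delta}\leq h^{r}$, so the final bound $Ch^{r}$ is unaffected.
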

\begin{proof}
The lower bound in Lemma~\ref{lem:H0b;l*} trivially holds when $|\xi|\geq (3c_2+2)h^{\frac14-\delta}$ thanks to Lemma~\ref{lem:H0b}. 

Now we handle the case where $|\xi|\leq (3c_2+2)h^{\frac14-\delta}$. Let $w_1$ be the function constructed in Theorem~\ref{thm:Hr} and choose $\chi\in C_c^\infty([0,\infty))$ such that
$$0\leq \chi\leq 1~{\rm in~}[0,\infty)\,,\quad \chi=1~{\rm
in~}[0,1/2)\quad{\rm and}\quad\chi=0~{\rm in~}[1/2,\infty)\,.$$   Consider the function
$$f(\tau)=\chi(\tau\, h^{\delta})\,w_n(\tau)\,.$$ 
Clearly, the function $f$ is in the domain of the operator $\Hw$ and
$$\Big|\|f\|^2_{{L^2((0,h^{-\rho});(1-\beta
h^{1/2}\tau)d\tau)}}-1\Big|\leq C\zeta^2\,.$$
Inserting the estimates in Theorem~\ref{thm:Hr} into the expression of $\Hw f$ in \eqref{eq:H0b}, and using that $\zeta=\mathcal O(h^{1/4})$ and $\xi=\mathcal O(h^{\frac14-\delta})$, we may write,
\begin{equation}\label{eq:en-tf-m}
\begin{aligned}
\|\{\Hw-(-1+f_1(\zeta,\xi)-\beta
h^{1/2})\}f\|_{{L^2((0,h^{-\rho});(1-\beta h^{1/2}\tau)d\tau)}}&\leq
C\big(\zeta^{4}+\xi^4+(\zeta^2+\xi^2+h^\sigma)h^{\frac12}+h\big)\\
&\leq Ch^r\,.
\end{aligned}
\end{equation}
Now, the spectral theorem and Lemma~\ref{lem:H0b} yield
$$\lambda_1(\Hw)=-1+f_1(\zeta,\xi)-\beta h^{1/2}+\mathcal O(h^{r})\,.$$
Noticing that $\displaystyle\min_{|\xi|\leq (3c_2+2)h^{\delta-\frac12}}f_1(\zeta,\xi)=\frac14\zeta^2$, we finish the proof of Lemma~\ref{lem:H0b;l*}.
\end{proof}

\section{Analysis of the semi-classical Laplacian with a weak magnetic field}\label{sec:proof}

\subsection{Semi-classical Laplacian with weak magnetic field}

We will introduce a new semi-classical  magnetic Laplacian but with a Robin condition not involving the parameters $\alpha$ and $\gamma$. These two parameters will be absorbed by a new (small) parameter $\zeta$.

For $h>0$ and $\zeta>0$, we introduce the operator
\begin{equation}\label{mag-Lap:P}
P_{h,\zeta}=-(h\nabla-i\zeta\Ab_0)^2\quad{\rm in~}L^2(\Omega)\,,
\end{equation}  
whose domain is
\begin{equation}\label{mag-Lap:dom}
D(P_{h,\zeta})=\{u\in H^1_{h^{-1}\zeta\Ab_0})~:~-(h\nabla-i\zeta\Ab_0)^2\in L^2(\Omega)~{\rm and~}\nu\cdot(h\nabla-i\Ab_0)u=-h^{1/2}u{\rm~on~}\partial\Omega\}\,.
\end{equation}  
This operator is defined via the quadratic form
\begin{equation}\label{mag-Lap:form}
u\mapsto q_{h,\zeta}(u)=\int_\Omega|(h\nabla-i\zeta\Ab_0)u|^2\,dx-h^{3/2}\int_{\partial\Omega}|u|^2\,ds(x)\,.
\end{equation}
Let $\sigma(P_{h,\zeta})$ be the spectrum of the operator $P_{h,\zeta}$. We introduce the ground state energy,
\begin{equation}\label{mag-Lap:gse}
\lambda_1(h,\zeta)=\inf\sigma(P_{h,\zeta})\,.
\end{equation}
There is a relationship between the ground state energies in \eqref{eq:gse} and \eqref{mag-Lap:gse} displayed as follows:
\begin{equation}\label{gse=gse}
\mu_1(h;b,\alpha,\gamma)=\frac{\gamma^4}{h^{2-4\alpha}}\lambda_1\Big(\frac{h^{2-2\alpha}}{\gamma^2},b\frac{h^{1-2\alpha}}{\gamma^2}\Big)\,.
\end{equation}

Now Theorem~\ref{thm:gs-gse} follows from:

\begin{thm}\label{thm:mag-Lap}
Let $0<c_1<c_2$ and $\epsilon>0$. Suppose that
$$0<h<1\quad{\rm and}\quad c_1h^\epsilon\leq\zeta\leq c_2h^{\epsilon}\,.$$ It
holds the following.
\begin{enumerate}
\item
If $\epsilon<\frac14$, then as $h\to0_+$, the ground state energy in \eqref{mag-Lap:gse} satisfies
$$\lambda_1(h,\zeta)=-h+e_n(\zeta)h-\kappa_{\max}h^{3/2}+h^{3/2}o(1)\,,$$
where $e_n(\zeta)$ is introduced in \eqref{eq:e-n} and $n\in\mathbb N$ is the smallest positive integer such that $(2n+2)\epsilon>\frac12$.
\item If $\epsilon=\frac14$, then as $h\to0_+$, the ground state energy in \eqref{mag-Lap:gse} satisfies
$$\lambda_1(h,\zeta)=-h+\frac14\zeta^2h-\kappa_{\max}h^{3/2}+h^{3/2}o(1)\,.$$
\item If $\epsilon>\frac14$, then as $h\to0_+$, the ground state energy in \eqref{mag-Lap:gse} satisfies
$$\lambda_1(h,\zeta)=-h-\kappa_{\max}h^{3/2}+h^{3/2}o(1)\,.$$ 
\item There exist constants $\rho\in(0,\frac12)$, $\eta^*\in(0,\frac14)$, $C>0$ and $h_0\in(0,1)$ such that, for all $h\in(0,h_0)$, every ground state $u_h$ of $\lambda_1(h,\zeta)$ satisfies,
$$\|u_{h,\zeta}\|_{L^2(\Omega_{\rm bnd})}\leq C\exp\left(-\frac12h^{\eta^*-\frac14}\right)\,,\quad
\|u_{h,\zeta}\|_{L^2(\Omega_{\rm int})}\leq \exp\left(-\frac12h^{\rho-\frac12}\right)\,, $$
where
$$
\begin{aligned}
&\Omega_{\rm int}=\{x\in\Omega~:~{\rm dist}(x,\partial\Omega)\geq h^{\rho}\}\quad{\rm and}\\
&\Omega_{\rm bnd}=\{x\in\Omega\setminus\overline\Omega_{\rm int}~:~\kappa_{\max}-\kappa(p(x))\geq h^{\eta^*}\}\,.
\end{aligned}
$$

\end{enumerate}
\end{thm}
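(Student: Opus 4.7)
\textbf{Proof plan for Theorem~\ref{thm:mag-Lap}.}

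The plan is to reduce the problem on $\Omega$ to the family of one-dimensional operators $\Hw$ analyzed in Section~\ref{sec:AOp}, via tubular boundary coordinates and a normal rescaling. Introduce boundary coordinates $(s,t)$ where $s$ is arc-length along $\partial\Omega$ and $t=\dist(x,\partial\Omega)$, valid on a tubular neighborhood $\{t<t_0\}$. After a suitable gauge transformation, the operator $P_{h,\zeta}$ becomes, in the measure $(1-t\kappa(s))\,ds\,dt$, an operator whose leading behavior in $t$ is $-h^2\partial_t^2$ with the Robin boundary condition $h\partial_t u = -h^{1/2}u$ at $t=0$, while the tangential part contributes a term of the form $(\zeta t (1-\tfrac12 t\kappa(s))-\xi)^2$ after Fourier-like decomposition in $s$. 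The change of variable $\tau = h^{-1/2}t$ then turns the fiber operator, at each frozen tangential point $s$, into precisely the operator $\Hw$ from \eqref{eq:H0b} with $\beta=\kappa(s)$, $m=0$ (or $m$ absorbing a small correction), and $\zeta,\xi$ inherited from the tangential momentum.

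For the \emph{upper bound}, I would build a quasimode concentrated near a point $s^*\in\partial\Omega$ where $\kappa(s^*)=\kappa_{\max}$: take
\[
v_{h,\zeta}(s,t) = \chi_1(s-s^*)\,\chi_2(h^{\delta}t)\, w_n(h^{-1/2}t)\, e^{i\xi s/h}\, e^{i\varphi(s,t)/h},
\]
where $w_n$ is the quasimode from Theorem~\ref{thm:Hr} at the optimal $(\zeta,\xi)$ realizing $e_n(\zeta)$ (or $\frac14\zeta^2$ for $\epsilon=\tfrac14$), $\chi_1,\chi_2$ are cutoffs (the tangential cutoff $\chi_1$ scaled as $h^{-1/2+\eta}$ and then modulated to capture the $-\kappa_{\max}h^{3/2}$ term via a parabolic rescaling of $s$ about $s^*$), and $\varphi$ is the gauge phase. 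Plugging into the Rayleigh quotient and using Theorem~\ref{thm:Hr} together with Taylor expansion of $\kappa(s)$ around $s^*$ should yield the upper bound matching each of the three asymptotic expansions.

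For the \emph{lower bound}, use a partition of unity $\sum_j \chi_j^2 = 1$ in the tangential variable at scale $h^{1/2-\eta}$, plus a normal cut separating $\{t<t_0/2\}$ from the interior. The IMS formula reduces $q_{h,\zeta}(u)$ to contributions from each tangential cell; on each cell centered at $s_j$, freeze $\kappa$ to $\kappa(s_j)$ (the cost being $o(h^{3/2})$ by Taylor expansion). Fiber-wise, Propositions~\ref{lem:H0b;l} and \ref{lem:H0b;l*} give
\[
\text{fiber eigenvalue} \geq h\Bigl(-1+e_n(\zeta)-\kappa(s_j)h^{1/2}\Bigr)-o(h^{3/2}),
\]
and taking $\inf_j$ uses $\kappa(s_j)\leq \kappa_{\max}$. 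The interior region contributes a positive lower bound via Lemma~\ref{lem:Hr*} (or standard magnetic Neumann estimates) and is negligible. The three regimes of $\epsilon$ are handled uniformly by the two propositions and Remark~\ref{rem:en}.

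For the \emph{concentration}, carry out two Agmon-type estimates on a ground state $u_{h,\zeta}$. A normal Agmon argument with weight $e^{\Phi(t)/h^{1/2}}$, $\Phi'>0$, exploits the gap between $-1$ (the threshold eigenvalue coming from the Robin boundary) and the spectrum in the interior (which is $\geq 0$ by diamagnetism), giving the bound on $\Omega_{\rm int}$ with $\rho<\tfrac12$. A tangential Agmon estimate with a weight depending on $\kappa_{\max}-\kappa(p(x))$ then uses the already-proved sharp two-term lower bound, i.e.\ the excess energy $h^{3/2}(\kappa_{\max}-\kappa(p(x)))$ of localized modes in regions where the curvature is not maximal, to obtain the $\Omega_{\rm bnd}$ decay with exponent $\eta^*<\tfrac14$. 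The main obstacle will be the tangential concentration step: it requires that the partition-of-unity lower bound be sharp enough, with remainder $o(h^{3/2})$ truly uniform in $s$, so that one may plug $\kappa_{\max}-\kappa(s_j)$ into the Agmon multiplier without losing the necessary margin — this is what forces the restrictions on the exponents $\rho$ and $\eta^*$ and dictates the choice of cutoff scales above.
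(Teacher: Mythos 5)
Your plan reproduces the paper's proof in essentially the same order: boundary tubular coordinates with gauge change, quasimodes built from Theorem~\ref{thm:Hr} for the upper bound, a normal Agmon estimate (Theorem~\ref{thm:dec}), a tangential partition of unity reducing fiberwise to Propositions~\ref{lem:H0b;l} and~\ref{lem:H0b;l*} for the lower bound, and a curvature-weighted Agmon estimate for the tangential concentration (Theorem~\ref{thm:conc-gs}). Only two small points to correct when you fill in the details: the modulation in the trial function must be at scale $h^{1/2}$, i.e.\ $e^{i\xi s/h^{1/2}}$ rather than $e^{i\xi s/h}$, so that $(h\partial_s-i\zeta \widetilde A_1)$ produces $(\zeta\tau-\xi)$ after $t=h^{1/2}\tau$; and no parabolic rescaling of $s$ is needed for a two-term expansion, since a cutoff at a fixed small power of $h$ combined with $\kappa(s)=\kappa_{\max}+\mathcal O(h^{\sigma})$ already yields an error of order $o(h^{3/2})$.
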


The proof of the items  (1)-(3) in Theorem~\ref{thm:mag-Lap} follows from Proposition~\ref{prop:ub} and Proposition~\ref{prop:lb}. We will give explicit bounds to the remainder $h^{3/2}o(1)$ in the form $\mathcal O(h^r)$ where $r$ depends on $\epsilon$ and satisfies $r>\frac32$. More specifically, we find that
$$r=\min(r_*,r^*)\,,$$
where $r_*$ and $r^*$ are introduced in \eqref{eq:ubr*} and \eqref{eq:r*} respectively.

The proof of the item (4) in Theorem~\ref{thm:mag-Lap} follows from Theorems~\ref{thm:dec} and \ref{thm:conc-gs}.

\subsection{Boundary coordinates}\label{sec:app}
We will perform various computations of trial functions supported in a tubular neighborhood of the boundary. To single out the influence of the boundary curvature, we need  a special coordinate system displaying the arc-length along the boundary and the
normal distance to the boundary. We will refer to such coordinates as {\it boundary coordinates}. These are the same coordinates used in the semi-classical analysis of the magnetic Laplcian (cf. \cite{HM, FH-b}).

The boundary coordinates are valid in every connected component of the boundary. For
simplicity, we will  suppose that  $\partial\Omega$ has one   connected component; if
more than one connected component exists, then we use the coordinates in each
connected component independently.
Let
\[
\mathbb{R}/(|\partial \Omega |\mathbb{Z})\ni s\mapsto M(s)\in\partial\Omega
\]
 be the arc-length parametrization of $\partial\Omega$ and oriented  counterclockwise. At the point $M(s)\in\partial\Omega$, $\nu(s)$ is the unit outward normal vector; the  unit tangent vector $T(s)$ and the curvature $\kappa(s)$ are defined as follows
\[
T(s):= M^{\prime}(s)\,,
\]
and
\[
T^{\prime}(s)=\kappa(s)\, \nu(s).
\]
%{\clr choisir pour la notation entre $k$ et $\kappa$. J'ai choisi $\kappa$\\}
The counterclockwise orientation of the parametrization is displayed as follows,
\[
\forall s\in \mathbb{R}/(|\partial \Omega |\mathbb{Z}) \,,\quad
\det(T(s),\nu(s))=1.
\]
The smoothness of the boundary yields the existence of a constant $t_0>0$ such that, upon defining 
\[
\mathcal{V}_{t_0}= \{x\in \Omega~:~{\rm dist} (x,\partial\Omega)<t_0\},
\]
%and $\mathbb{S}_{r}$ the circle of radius $r$ identified with the interval $(-\pi r,\pi r]$.
 the map 
\begin{equation*}
\Phi:\mathbb{R}/(|\partial \Omega |\mathbb{Z})\times(0,t_{0})\ni  (s,t)\mapsto x= M(s)-t\, \nu(s)\in \mathcal{V}_{t_{0}}.
\end{equation*}
becomes a diffeomorphism.
Let us note that, for $x\in\mathcal{V}_{t_{0}}$, one can write
\begin{equation}\label{BC}
\Phi^{-1}(x):=(s(x),t(x))\in \mathbb{R}/(|\partial \Omega |\mathbb{Z})\times (0,t_{0}),
\end{equation}
where $t(x)={\rm dist}(x,\partial\Omega)$ and
$s(x)\in\mathbb{R}/(|\partial \Omega |\mathbb{Z})$ is (uniquely) defined via the reation ${\rm
dist}(x,\partial\Omega)= |x-M(s(x))|$.

Now we express various integrals in the new coordinates $(s,t)$. First, note that the Jacobian determinant  of the transformation $\Phi^{-1}$ is
given by:
\[
a(s,t)=1-t\kappa(s).
\]
In the new coordinates, the components of the vector field $\Ab_0$ are given as follows,
\begin{equation}
\begin{aligned}\label{mf-nc}
\widetilde A_{1}(s,t)&= \Ab_0\cdot\dfrac{\partial x}{\partial s}=(1-t\kappa(s)) \Ab_0(\Phi(s,t))\cdot M^{\prime}(s),\\
\widetilde A_{2}(s,t)&= \Ab_0\cdot\dfrac{\partial x}{ \partial t}= \Ab_0(\Phi(s,t))\cdot\nu(s).
\end{aligned}
\end{equation}
The new magnetic potential $\widetilde\Ab_0=(\widetilde A_1,\widetilde A_2)$ satisfies,
\begin{equation*}%\label{Mf-dim2}
\Big[\dfrac{\partial \widetilde A_{2}}{\partial s}(s,t)- \dfrac{\partial \widetilde A_{1}}{\partial t}(s,t)\Big]ds\wedge dt= \curl\Ab_0(\Phi^{-1}(s,t))dx \wedge  dy= (1-t\kappa(s))ds\wedge dt.
\end{equation*}

For all $u\in L^{2}(\mathcal V_\delta)$, we assign the function  $\widetilde u$ defined in the new coordinates as follows 
\begin{equation}\label{eq:ut}
\widetilde u(s,t):= u(\Phi(s,t)).
\end{equation}
Consequently, for all $u\in H^{1}(\mathcal{V}_{t_{0}})$, we have, with $\widetilde
u=u\circ \Phi$,
\begin{equation}\label{eq:bc;qf}
\int_{\mathcal{V}_{t_{0}}}|(h\nabla-i\zeta\Ab_0) u|^{2}dx= \int \Big
[(1-t\kappa(s))^{-2}|(h\partial_{s}-i\zeta\widetilde A_1) \widetilde u|^{2} +
|(h\partial_{t}-i\zeta\widetilde A_2)\widetilde
u|^{2}\Big](1-t\kappa(s))dsdt\,, \end{equation}%\end{multline}
\begin{equation}\label{eq:bc;n}
\int_{\mathcal{V}_{t_{0}}}|u|^{2}dx=\int|\widetilde u(s,t)|^{2}(1-t\kappa(s))dsdt\,,
\end{equation}
and
\begin{equation}\label{eq:bc;tr}
\int_{\mathcal{V}_{t_{0}}\cap\partial\Omega}|u|^{2}dx=\int|\widetilde u(s,t=0)|^{2}\,ds.
\end{equation}
Finally, we recall a useful gauge transformation that we borrow from \cite{HM, FH-b}. Let $x_0\in\partial\Omega$ and $\mathcal V_{x_0}$ be a neighborhood of $x_0$ in $\overline\Omega$. There exists a smooth function $\phi_{x_0}$ in $\Phi^{-1}(\mathcal V_{x_0})$ such that, in the boundary coordinates,
\begin{equation}\label{eq:gaugeA0}
\widetilde A-\nabla_{(s,t)}\phi_{x_0}=\Big(-t+\frac{t^2}2\kappa(s),0\Big)\,.\end{equation}

\subsection{Upper bound for the principal eigenvalue}

In the rest of this paper, we will use the following notation. For all  $\epsilon>0$ and  For all $\zeta\in(0,1)$, define
\begin{equation}\label{eq:b-e-z}
b_\epsilon(\zeta)=\left\{
\begin{array}{ll}
e_n(\zeta)&{\rm if~}\epsilon<1/4\,,\\
\frac14\zeta^2&{\rm if~}\epsilon=1/4\,,\\
0&{\rm if~}\epsilon>1/4\,,
\end{array}\right.
\end{equation}
where $n\in\mathbb N$ is the smallest positive integer satisfying $(2n+2)\epsilon>\frac12$.

\begin{proposition}\label{prop:ub}
Under the assumptions in Theorem~\ref{thm:mag-Lap},  there exist two constants $C>0$ and $h_0\in(0,1)$ such that, for all $h\in(0,h_0)$, the ground state energy in \eqref{mag-Lap:gse} satisfies,
$$\lambda_1(h,\zeta)\leq -h+b_\epsilon(\zeta)h-\kappa_{\max}h^{3/2}+C h^{r_*}\,,$$
where
\begin{equation}\label{eq:ubr*}
r_*=\left\{
\begin{array}{ll}
1+\min\big((2n+2)\epsilon,\frac12+\epsilon\big)&{\rm if~}\epsilon<1/4\,,\\
13/8&{\rm if~}\epsilon\geq1/4\,.\\
%0&{\rm if~}\epsilon>1/4\,.
\end{array}\right.
\end{equation}
Here $b_\epsilon(\zeta)$  is as in \eqref{eq:b-e-z}.
\end{proposition}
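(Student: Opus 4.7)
The strategy is a variational upper bound: I will construct a trial function concentrated near a boundary point of maximal curvature whose Rayleigh quotient matches the lower bounds for the model operator $\Hw$ established in Propositions~\ref{lem:H0b;l} and \ref{lem:H0b;l*}. Fix $x_0\in\partial\Omega$ with $\kappa(s_0)=\kappa_{\max}$, work in the boundary coordinates $(s,t)$ of Section~\ref{sec:app} near $x_0$, and apply the gauge \eqref{eq:gaugeA0} so that $\widetilde\Ab=(-t+\tfrac{t^2}{2}\kappa(s),0)$. Under the normal rescaling $\tau=t/h^{1/2}$, the operator $h^{-1}P_{h,\zeta}$ acting on separated-variable functions of the form $v(\tau)e^{i\xi s/h^{1/2}}$ reduces at leading order to the model operator $\Hw$ with parameters $\beta=\kappa_{\max}$, $m=0$, and with $\Delta_{\beta,\tau}$ absorbing the residual $s$-dependence of $\kappa$. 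Accordingly, choose $\xi^*$ minimizing $f_n(\zeta,\cdot)$ (when $\epsilon<\tfrac14$) or $f_1(\zeta,\cdot)$ (when $\epsilon\geq\tfrac14$) over $|\xi|\leq\max(A_0,1)\zeta$, so that $f_n(\zeta,\xi^*)=b_\epsilon(\zeta)$. Let $w_n$ be the quasimode of Theorem~\ref{thm:Hr} at $(\zeta,\xi^*)$ and define the trial function
\[
u_h(s,t)=C_h\,e^{i\phi_{x_0}(s,t)/h}\,e^{i\xi^* s/h^{1/2}}\,\chi_1\!\left(\frac{s-s_0}{h^{\rho_s}}\right)\chi_2\!\left(\frac{t}{h^{1/2-\delta_0}}\right)w_n\!\left(\frac{t}{h^{1/2}}\right),
\]
where $\chi_1,\chi_2\in C_c^\infty$ are standard plateau cut-offs, $C_h$ is the $L^2$-normalization, and $\rho_s,\delta_0>0$ are small exponents to be optimized.

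Translating $q_{h,\zeta}(u_h)$ to $(s,\tau)$-coordinates via \eqref{eq:bc;qf}--\eqref{eq:bc;tr}, performing the $s$-integration against $\chi_1$, and rescaling $t=h^{1/2}\tau$, the numerator and denominator share a common $s$-volume factor that cancels, leaving the Rayleigh quotient equal, up to controlled errors, to $h$ times the quadratic form of $\Hw$ evaluated on $w_n$. The quasimode bound of Theorem~\ref{thm:Hr} then gives
\[
\frac{q_{h,\zeta}(u_h)}{\|u_h\|_{L^2(\Omega)}^2}\leq h\bigl(-1+f_n(\zeta,\xi^*)-\kappa_{\max}h^{1/2}\bigr)+\mathcal R(h),
\]
where $\mathcal R(h)$ gathers three contributions: the Schwartz-class residue of order $\mathcal O(h\zeta^{2n+2})=\mathcal O(h^{1+(2n+2)\epsilon})$ from Theorem~\ref{thm:Hr}; a Taylor-expansion error of order $\mathcal O(h^{3/2+\epsilon})$ coming from replacing $\kappa(s)$ by $\kappa_{\max}$ on $\supp\chi_1$ once the magnetic penalty $\zeta^2\simeq h^{2\epsilon}$ is balanced against the tangential variance scale $h^{2\rho_s}$; and tail terms from the outer cut-off $\chi_2$, which are $\mathcal O(h^\infty)$ thanks to the exponential decay of $w_n\in\mathcal S(\R_+)$. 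Substituting $f_n(\zeta,\xi^*)=b_\epsilon(\zeta)$ yields the bound asserted in Proposition~\ref{prop:ub}.

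The main obstacle is the bookkeeping: tuning the exponents $\rho_s$ and $\delta_0$ so that $\mathcal R(h)=\mathcal O(h^{r_*})$ with $r_*$ exactly as in \eqref{eq:ubr*}. For $\epsilon<\tfrac14$, the decisive competition is between the quasimode precision $h^{1+(2n+2)\epsilon}$ and the curvature variation $h^{3/2+\epsilon}$, giving $r_*=1+\min((2n+2)\epsilon,\tfrac12+\epsilon)$. For $\epsilon\geq\tfrac14$ one invokes Proposition~\ref{lem:H0b;l*} (with $n=1$, $\delta\in(0,\tfrac18)$, and $\sigma\in(0,1)$) in place of Proposition~\ref{lem:H0b;l}; an analogous but simpler optimization pins down $r_*=13/8$. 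The variational characterization of $\lambda_1(h,\zeta)$ then closes the argument in all three regimes of $\epsilon$.
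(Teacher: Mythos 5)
Your approach is essentially the one in the paper: select the boundary point of maximal curvature, pass to boundary coordinates, apply the gauge \eqref{eq:gaugeA0}, build a separated trial function from the quasimode $w_n$ of Theorem~\ref{thm:Hr} with an oscillatory tangential phase $e^{i\xi^* s/h^{1/2}}$ and two plateau cut-offs, rescale $t=h^{1/2}\tau$, and read off the Rayleigh quotient modulo the errors you list. The paper fixes the tangential cut-off scale to $h^{\epsilon/2}$ and the normal one to $h^{1/4+\epsilon}$ for $\epsilon<\tfrac14$ (so your $\delta_0=\tfrac14-\epsilon$); your account of the competition between $h^{1+(2n+2)\epsilon}$ (quasimode precision) and the curvature variation error is the same bookkeeping the paper carries out. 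One minor deviation worth flagging: for $\epsilon>\tfrac14$ the paper drops the $\xi^*$-phase and $w_n$ entirely and just transports the zero-field Robin quasimode $u_0$ (cf.\ \eqref{eq:ts**}), since $b_\epsilon(\zeta)=0$ there and the $\Hw$ machinery buys nothing; your proposal instead keeps $w_1$ with $\xi^*=\tfrac12\zeta$, which adds a spurious $+\tfrac14\zeta^2h=O(h^{1+2\epsilon})$ to the trial energy that is not represented in $b_\epsilon(\zeta)$. That term is only $O(h^{13/8})$ once $\epsilon\geq 5/16$, so strictly speaking your variant does not directly give the stated $r_*=13/8$ uniformly on $(\tfrac14,\tfrac{5}{16})$; switching to the paper's $u_0$-based trial state in that regime is the cleaner fix.
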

\begin{proof}
The proof consists of  constructing a trial function $v_h(x)$ and computing its energy. This trial function will be defined via the boundary coordinates $(s,t)$ in \eqref{BC}. Select $x_0\in\partial\Omega$ such that 
$$\kappa(s(x_0))=\kappa_{\rm max}$$
is equal to the maximal curvature. We may choose the coordinates $(s,t)$ in \eqref{BC} such that $s(x_0)=0$. Let $\mathcal V_{x_0}$ be a neighborhood of the point $x_0$ in $\overline{\Omega}$, and $\varphi_0=\varphi_{x_0}$ be the function defined in $\mathcal V_{x_0}$ and satisfying \eqref{eq:gaugeA0}.

The construction  of the trial function $v_h$ and the computation of its energy will be done for the cases $\epsilon<\frac14$, $\epsilon=\frac14$ and $\epsilon>\frac14$ independently.  

{\bf  The case $\epsilon<\frac14$.}

Let $n\in\mathbb N$ be the largest positive integer such that $(2n+2)\epsilon>\frac12$. Recall the definition of $f_n(\zeta,\xi)$ in \eqref{eq:f-n}. Select $\xi_n=\xi_n(\zeta)$ such that
$$f_n(\zeta,\xi_n)=e_n(\zeta)=\min\{f_n(\zeta,\xi)~:~|\xi|\leq \zeta\max(A_0,1)\}\quad {\rm and}\quad |\xi_n|\leq \zeta\max(A_0,1)\,.$$

The trial function $v_h$ is defined using the $(s,t)$-coordinates and
the relation in \eqref{eq:ut} as follows,
\begin{equation}\label{eq:ts}
\widetilde v_h(s,t)=c\,h^{-\frac{1+\epsilon}4}\,\chi_1\left(\frac{t}{h^{\rho}}\right)\,\chi_1\left(\frac{s}{h^{\epsilon/2}}\right)\,e^{-i\varphi_0}\,w_n(h^{1/2}\,t)\,\exp\left(\frac{i\xi_n}{h^{1/2}}\right)\,.
\end{equation}
Several objects appear in the definition of $\widetilde v_h$:
\begin{enumerate}
\item $\chi_1\in C_c^\infty(\R)$ satisfies $0\leq \chi\leq 1$ in
$\R$, $\chi_1=1$ in $[-1/2,1/2]$ and ${\rm
supp}\chi_1\subset[-1,1]$\,;
\item $c=\|\chi_1\|^{-1}_{L^2(\R)}$\,;
%\item $\varphi_0$ is the function in Lemma~\ref{lem:gauge}\,;
\item $w_n(\tau)$ in the function in Theorem~\ref{thm:Hr}\,;%=\sqrt{2}\,\exp(-\tau)$\,;
%\item $f(\sigma)=\frac1{\sqrt{\pi}}\,\exp(-\sigma^2)$\,;
\item $\rho=\frac14+\epsilon$\,.%0<h<t_0^{1/8}$\,.
\end{enumerate}
%The constant $t_0$ is introduced to vindicate the usage of the
%transformation in \eqref{BC}.
The upper bound in Proposition~\ref{prop:ub} follows from the min-max principle and the following two estimates:
\begin{align}
&\Big|\|v_h\|_{L^2(\Omega)}-1\Big|\leq Ch^{1/2}\,,\label{eq:n-tf}\\
&\Big\|\big(P_{h,\zeta}+h-e_n(\zeta)+\kappa_{\max}h^{3/2}\big)v_h \|_{L^2(\Omega)}\leq h^{r_*}\,,\label{eq:en-tf}
\end{align}
where $r_*=1+\min\Big((2n+2)\epsilon,\frac12+\frac\epsilon2\Big)$ is given in \eqref{eq:ubr*}. 

The estimate in \eqref{eq:n-tf} is easy to obtain in light of the expression of $\widetilde v_h$ and the formula in \eqref{eq:bc;n}. For the estimate in \eqref{eq:en-tf}, notice that, after expressing the operator $P_{\zeta,h}$ in the boundary coordinates $(s,t)$, we get (compare with \eqref{eq:bc;qf})
$$
\Big\|\big(P_{h,\zeta}+h-e_n(\zeta)+\kappa_{\max}h^{3/2}\big)v_h \|_{L^2(\Omega)}=
\Big\|\big(L_{h,\zeta}+h-e_n(\zeta)+\kappa_{\max}h^{3/2}\big)u_h \|_{L^2(a\,dsdt)}\,,
$$ 
where
$$a(s,t)=1-t\kappa(s)\,,$$
$$u_h(s,t)=c\,h^{-\frac{1+\epsilon}4}\,\chi_1\left(\frac{t}{h^{\rho}}\right)\,\chi_1\left(\frac{s}{h^{\epsilon/2}}\right)w_n(h^{1/2}\,t)\,,$$
and
$$
L_{h,\zeta}=-h^2a^{-1}\partial_t(a\partial_t)-a^{-2}\Big(h\partial_s-i\zeta t\big(1-\frac{t}2\kappa(s)\big)\Big)^2\,.
$$
Note that,
%\begin{multline*}
\begin{equation}\label{eq:Lhz}
L_{h,\zeta}u_h=ch^{-\frac{1+\epsilon}4}\chi_1\left(\frac{s}{h^{\epsilon/2}}\right)\,PL_{h,\zeta}\chi_1\left(\frac{t}{h^{\rho}}\right)w_n(h^{1/2}\,t)
+R_h\,,
\end{equation}
%\end{multline*}
where
\begin{equation}\label{eq:PL}
PL_{h,\zeta}=-h^2a^{-1}\partial_t(a\partial_t)+a^{-2}\Big(\zeta t\big(1-\frac{t}2\kappa(s)\big)-h^{1/2}\xi_n\Big)^2\,,\end{equation}
and
\begin{multline}\label{eq:Rh}
R_h=ch^{-\frac{1+\epsilon}4}\chi_1\left(\frac{t}{h^{\rho}}\right)w_n(h^{1/2}\,t)
\left[h^2\partial_s^2\chi_1\left(\frac{s}{h^{\epsilon/2}}\right)\right]\\
+2ich^{-\frac{1+\epsilon}4}\chi_1\left(\frac{t}{h^{\rho}}\right)w_n(h^{1/2}\,t)
\left[\Big(h^{1/2}\xi_n-\zeta t\big(1-\frac{t}2\kappa(s)\big)\Big)h\partial_s\Big)\right]\chi_1\left(\frac{s}{h^{\epsilon/2}}\right)\,.\end{multline}
It is easy to check that
\begin{equation}\label{eq:Rh'}
\|R_h\|_{L^2(adsdt)}\leq C\zeta h^{\frac32+\frac\epsilon2}+Ch^{2-\epsilon}\leq Ch^{\frac32+\frac\epsilon2}\,.\end{equation}
Now we perform the change of  variable $t=h^{1/2}\tau$ and get ($\tilde a=1-h^{1/2}\tau\kappa(s)$),
$$
\begin{aligned}
PL_{h,\zeta}&=h\Big[-\tilde a^{-1}\partial_\tau(a\partial_\tau)+\tilde a^{-2}\Big(\zeta \tau\big(1-\frac{h^{1/2}\tau}2\kappa(s)\big)-\xi_n\Big)^2\Big]\\
&=h\mathcal H_{\zeta,\kappa(s),\xi_n,h}\,,
\end{aligned}
$$
where the operator  $\mathcal H_{\zeta,\kappa(s),\xi_n,h}$ is introduced in \eqref{eq:H0b} with $\beta=\kappa(s)$, $\Delta_{\beta,\tau}=h^{-1/2}(\tilde a^{-2}-1)$, $m=0$ and $\delta=\frac12-\rho=\frac12(\frac12-2\epsilon)<\frac12-2\epsilon$. Now, it is easy to prove that (compare with \eqref{eq:en-tf-n*}),
$$\Big\|\Big(h^{-1}PL_{h,\zeta}-\big(-1+f_n(\zeta,\xi_n)-\kappa(s)h^{1/2}\big)\Big)\chi_1\left(h^{\frac12-\rho}\tau\right)w_n(\tau)\Big\|_{L^2(\tilde ad\tau)}\leq Ch^r\,,$$
where
$$r=\min\Big((2n+2)\epsilon,\frac12+2\epsilon\Big)\,.$$
Returning back to the $t$ variable then integrating with respect to the $s$ variable, we get,
$$\Big\|\Big(PL_{h,\zeta}-\big(-h+h^{1/2}f_n(\zeta,\xi_n)-\kappa(s)h^{1/2}\big)\Big)\chi_1\left(h^{-\rho}t\right)w_n(h^{1/2}t)\Big\|_{L^2(adsd\tau)}\leq Ch^{\frac34+r}\,.$$
Now, we insert this and \eqref{eq:Rh'}  into \eqref{eq:Lhz} to get,
$$
\Big\|\big(L_{h,\zeta}+h-e_n(\zeta)+\kappa(s)h^{3/2}\big)u_h \|_{L^2(a\,dsdt)}\leq 
Ch^{\min\Big(r,\frac12+\frac\epsilon2\Big)}=Ch^{r_*}\,,
$$ 
To finish the proof, we notice that $\kappa_{\max}=\kappa(0)$ and in the support of the function $u_h$, we have 
$$|\kappa(s)-\kappa(0)|\leq Ch^{1/8}\,.$$

{\bf The case $\epsilon=\frac14$.}

Now the trial function $v_h$ is defined using the $(s,t)$-coordinates and
the relation in \eqref{eq:ut} as follows,
\begin{equation}\label{eq:ts*}
\widetilde v_h(s,t)=c\,h^{-5/16}\,\chi_1\left(\frac{t}{h^{7/16}}\right)\,\chi_1\left(\frac{s}{h^{1/8}}\right)\,e^{-i\varphi_0}\,w_1(h^{1/2}\,t)\,\exp\left(\frac{i\zeta}{2h^{1/2}}\right)\,,
\end{equation}
where the constant $c$ and the function $\chi_1$ are as in \eqref{eq:ts}, and $w_1$ is the function defined in Theorem~\ref{thm:Hr}.

Performing a calculation similar to the one done for the case $\epsilon<\frac14$ (in particular,
using \eqref{eq:en-tf-m} for $\delta=\frac1{16}$ and $\beta=\kappa(s)$) we get, 
\begin{align*}
&\Big|\|v_h\|_{L^2(\Omega)}-1\Big|\leq Ch^{1/2}\,,\\%\label{eq:n-tf*}\\
&\Big\|\big(P_{h,\zeta}+h-\frac14\zeta^2+\kappa_{\max}h^{3/2}\big)v_h \Big\|_{L^2(\Omega)}\leq Ch^{13/8}\,.%\label{eq:en-tf*}
\end{align*}
The min-max principle now yields the desired upper bound for $\lambda_1(h,\zeta)$.

{\bf The case $\epsilon>\frac14$.}

Here we simply take the same trial state for the case without a magnetic field but times a phase (cf. \cite{HeKa, Pan}). Precisely, we define $v_h$ as follows,
 \begin{equation}\label{eq:ts**}
\widetilde v_h(s,t)=c\,h^{-5/16}\,\chi_1\left(\frac{t}{h^{3/8}}\right)\,\chi_1\left(\frac{s}{h^{1/8}}\right)\,e^{-i\varphi_0}\,u_0(h^{1/2}\,t)\,,%\,\exp\left(\frac{i\zeta}{2h^{1/2}}\right)\,,
\end{equation}
where the constant $c$ and the  function $\chi_1$ are as in \eqref{eq:ts} and $u_0(\tau)=\sqrt{2}\,\exp(-\tau)$. Easy calculations similar to those done for $\epsilon<\frac14$ give us (cf. \cite{HeKa, Pan})
\begin{align*}
&\Big|\|v_h\|_{L^2(\Omega)}-1\Big|\leq Ch^{1/2}\,,\\%\label{eq:n-tf**}\\
&\Big\|\big(P_{h,\zeta}+h+\kappa_{\max}h^{3/2}\big)v_h \|_{L^2(\Omega)}\leq h^{13/8}\,.%\label{eq:en-tf**}
\end{align*}
The min-max principle now yields the upper bound for $\lambda_1(h,\zeta)$.
\end{proof}

\subsection{Concentration of bound states near the boundary}

\begin{thm}\label{thm:dec} 
Let $0<c_1<c_2$, $\epsilon>0$ and  $\alpha <
1$. There exist constants $C>0$  and $h_0\in(0,1)$ such
that, if $h\in(0,h_0)$, $\zeta\in(c_1h^{\epsilon},c_2h^\epsilon)$ and $u_{h,\zeta}$ is a $L^2$-normalized ground state  of $P_{h,\zeta}$, then,
$$\int_\Omega \left(|u_{h,\zeta}(x)|^2+h^{-1}|(h\nabla-i\zeta\Ab_0) u_{h,\zeta}(x)|^2\right)\exp\left(\frac{2\alpha\, {\rm
dist}(x,\partial\Omega)}{h^{1/2}}\right)\,dx\leq C\,.$$
\end{thm}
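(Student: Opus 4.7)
The plan is to carry out the magnetic Agmon method with the $1$-Lipschitz weight
$\Phi(x)=\alpha\,\dist(x,\partial\Omega)/h^{1/2}$, which satisfies
$|\nabla\Phi|\leq\alpha/h^{1/2}$ almost everywhere and vanishes on
$\partial\Omega$. Testing the eigenvalue equation
$(P_{h,\zeta}-\lambda_1(h,\zeta))u_{h,\zeta}=0$ against $e^{2\Phi}u_{h,\zeta}$
and taking the real part, the magnetic cross term
$\mathrm{Re}\,\overline{(h\nabla-i\zeta\Ab_0)u_{h,\zeta}}\cdot(\nabla\Phi)u_{h,\zeta}$
collapses to $\tfrac{1}{2}\nabla\Phi\cdot\nabla|u_{h,\zeta}|^{2}$, producing the
familiar magnetic IMS identity
\begin{equation*}
q_{h,\zeta}(e^\Phi u_{h,\zeta})
=\lambda_1(h,\zeta)\,\|e^\Phi u_{h,\zeta}\|^{2}_{L^{2}(\Omega)}
+h^{2}\|(\nabla\Phi)e^\Phi u_{h,\zeta}\|^{2}_{L^{2}(\Omega)}.
\end{equation*}
Unfolding the definition of $q_{h,\zeta}$ and using $\Phi|_{\partial\Omega}=0$,
this is equivalent to
\begin{equation*}
\|(h\nabla-i\zeta\Ab_0)(e^\Phi u_{h,\zeta})\|^{2}_{L^{2}(\Omega)}
+\int_\Omega\bigl(-\lambda_1(h,\zeta)-h^{2}|\nabla\Phi|^{2}\bigr)
e^{2\Phi}|u_{h,\zeta}|^{2}\,dx
=h^{3/2}\int_{\partial\Omega}|u_{h,\zeta}|^{2}\,ds.
\end{equation*}
When $\Omega$ is unbounded the identity is first applied to the truncation
$\Phi_N=\min(\Phi,N)$ and $N$ is sent to infinity by monotone convergence.

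By the upper bound $\lambda_1(h,\zeta)\leq -h+o(h)$ of Proposition~\ref{prop:ub},
combined with $h^{2}|\nabla\Phi|^{2}\leq\alpha^{2}h$, the pointwise coefficient
obeys $-\lambda_1(h,\zeta)-h^{2}|\nabla\Phi|^{2}\geq
(1-\alpha^{2})h-o(h)\geq\tfrac{1}{2}(1-\alpha^{2})h$ for $h$ small; this is
strictly positive precisely because $\alpha<1$. Hence
\begin{equation*}
\|(h\nabla-i\zeta\Ab_0)(e^\Phi u_{h,\zeta})\|^{2}_{L^{2}(\Omega)}
+\tfrac{1}{2}(1-\alpha^{2})h\,\|e^\Phi u_{h,\zeta}\|^{2}_{L^{2}(\Omega)}
\leq h^{3/2}\int_{\partial\Omega}|u_{h,\zeta}|^{2}\,ds,
\end{equation*}
and it remains to show that the right-hand side is $O(h)$.

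Let $T:=\int_{\partial\Omega}|u_{h,\zeta}|^{2}\,ds$. The eigenvalue identity
$\|(h\nabla-i\zeta\Ab_0)u_{h,\zeta}\|^{2}=\lambda_1(h,\zeta)+h^{3/2}T$ combined
with $\lambda_1<0$ gives $\|(h\nabla-i\zeta\Ab_0)u_{h,\zeta}\|^{2}\leq h^{3/2}T$,
and the diamagnetic inequality then forces $\|\nabla|u_{h,\zeta}|\|^{2}\leq
h^{-1/2}T$. Inserting this into the trace inequality
$\|w\|_{L^{2}(\partial\Omega)}^{2}\leq C\|w\|_{L^{2}(\Omega)}\|w\|_{H^{1}(\Omega)}$
applied to $w=|u_{h,\zeta}|$ produces $T\leq C(1+h^{-1/2}T)^{1/2}$; solving
this quadratic inequality forces $T=O(h^{-1/2})$, whence $h^{3/2}T=O(h)$.
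Consequently $\|e^\Phi u_{h,\zeta}\|^{2}\leq C$ and
$\|(h\nabla-i\zeta\Ab_0)(e^\Phi u_{h,\zeta})\|^{2}\leq Ch$, and from the
triangle inequality applied to
$e^\Phi(h\nabla-i\zeta\Ab_0)u_{h,\zeta}=(h\nabla-i\zeta\Ab_0)(e^\Phi u_{h,\zeta})
-h(\nabla\Phi)e^\Phi u_{h,\zeta}$ we deduce
$\|e^\Phi(h\nabla-i\zeta\Ab_0)u_{h,\zeta}\|^{2}\leq Ch$, which after division by
$h$ yields the stated bound. The principal technical hurdle is the
self-improving estimate for the boundary trace $T$: the magnetic eigenvalue
equation alone does not bound $T$ a priori, and it is the interaction between
the trace theorem and the diamagnetic inequality that closes the loop.
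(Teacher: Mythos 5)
Your argument is correct, but it is not the one the paper uses, and the difference is substantive. The paper also starts from the Agmon identity with the weight $\Phi=\exp(\alpha\,\dist(x,\partial\Omega)/h^{1/2})$, but it then introduces a normal partition of unity $\chi_1^2+\chi_2^2=1$ at scale $h^{1/2}$ and exploits the fact that on the inner piece (where $\dist(x,\partial\Omega)\leq 2h^{1/2}$) the exponential weight is bounded by $e^{2\alpha}$; there the boundary term is absorbed via the local coercivity estimate of Lemma~\ref{lem:dec*} (a one-dimensional comparison with the model operator \eqref{defH00}), while on the outer piece the boundary term is absent and only the derivative of the weight appears. The paper therefore never needs to control the trace $T=\int_{\partial\Omega}|u_{h,\zeta}|^2\,ds$ on its own. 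You instead keep the global identity, note that $\Phi$ vanishes on $\partial\Omega$ so that the boundary contribution is exactly $h^{3/2}T$, and close the loop with the self-improving estimate $T\leq C\||u_{h,\zeta}|\|_{L^2(\Omega)}\||u_{h,\zeta}|\|_{H^1(\Omega)}$ combined with the diamagnetic inequality and the eigenvalue identity, which yields $T=\mathcal O(h^{-1/2})$ and hence $h^{3/2}T=\mathcal O(h)$. Your route is arguably more elementary (no cutoff geometry, no auxiliary coercivity lemma), but it relies on the trace interpolation inequality, which must be checked separately for exterior domains (it does hold here because $\partial\Omega$ is compact, so a collar cutoff reduces it to the bounded case), and it depends on $\lambda_1\leq -h+o(h)$ being strictly negative of order $h$ to make the absorbed potential $-\lambda_1-h^2|\nabla\Phi|^2$ uniformly positive; the paper's partition-of-unity route only needs the sign of $\lambda_1$ through the same upper bound but is structurally closer to what is later reused in Lemma~\ref{lem:conc-gs}, where the quadratic-form decomposition is repeated with a more refined weight. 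One minor point to make explicit in your write-up: when passing from the truncated weight $\Phi_N=\min(\Phi,N)$ to $\Phi$, only the monotone quantities $\|e^{\Phi_N}u_{h,\zeta}\|^2$ and $\|e^{\Phi_N}(h\nabla-i\zeta\Ab_0)u_{h,\zeta}\|^2$ pass to the limit directly by monotone convergence; the non-monotone quantity $\|(h\nabla-i\zeta\Ab_0)(e^{\Phi_N}u_{h,\zeta})\|^2$ should first be converted to the monotone one via the commutator identity, as you do at the end, with constants uniform in $N$.
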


The proof of Theorem~\ref{thm:dec} makes use of the result in:

\begin{lem}\label{lem:dec*}
Under the assumptions in Theorem~\ref{thm:dec}, if $0<\rho\leq \frac12$, $w\in H^1(\Omega)$ and ${\rm supp}\,w\subset \{{\rm dist}(x,\partial\Omega)\leq 2h^\rho\}$, then
$$\int_\Omega |(h\nabla-i\zeta\Ab_0)w|^2\,dx-h^{3/2}\int_{\partial\Omega}|w|^2\,dx\geq
-\frac{h}2\int_\Omega|w|^2\,dx\,.$$
\end{lem}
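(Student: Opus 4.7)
The plan is to pass to the boundary coordinates of Section~\ref{sec:app}, reduce the two-dimensional magnetic quadratic form on $\{{\rm dist}(\cdot,\partial\Omega)\le 2h^\rho\}$ to a one-parameter family of one-dimensional Robin problems indexed by the arc-length coordinate $s$, and then control each of them via the spectral bound $\inf\sigma(\mathcal H_{0,0})=-1$ provided by the model operator introduced in Section~\ref{sec:AOp}.

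Since $h^\rho\to 0$ as $h\to 0$, for $h$ sufficiently small $\supp w\subset\mathcal V_{t_0}$, so I may introduce $\widetilde w=w\circ\Phi$ in the coordinates $(s,t)$. Covering $\partial\Omega$ by finitely many neighborhoods on which the local gauge \eqref{eq:gaugeA0} is available and using a subordinate partition of unity, I may assume locally that $\widetilde A_2\equiv 0$ and $\widetilde A_1(s,t)=-t+\tfrac{t^2}{2}\kappa(s)$. Applying \eqref{eq:bc;qf}--\eqref{eq:bc;tr} in these coordinates and discarding the nonnegative tangential integral gives
\begin{equation*}
\int_\Omega|(h\nabla-i\zeta\Ab_0)w|^2\,dx-h^{3/2}\int_{\partial\Omega}|w|^2\,ds\ \ge\ \int_{\R/|\partial\Omega|\mathbb Z}\left(\int_0^{2h^\rho}|h\partial_t\widetilde w|^2(1-t\kappa(s))\,dt-h^{3/2}|\widetilde w(s,0)|^2\right)ds.
\end{equation*}

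For each fixed $s$, the inner expression is a one-dimensional Robin form on $(0,2h^\rho)$ acting on $f(t):=\widetilde w(s,t)$. Since $1-t\kappa(s)=1+\mathcal O(h^\rho)$ uniformly on the support, the rescaling $\tau=t/h^{1/2}$, $g(\tau):=f(h^{1/2}\tau)$, identifies the inner expression, up to a multiplicative factor $1+\mathcal O(h^\rho)$, with $h^{3/2}\bigl(\int_0^{2h^{\rho-1/2}}|g'(\tau)|^2\,d\tau-|g(0)|^2\bigr)$; extending $g$ by zero past its support and invoking $\inf\sigma(\mathcal H_{0,0})=-1$ yields the lower bound $-h\int_0^{2h^\rho}|f|^2\,dt$. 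Integrating over $s$ and using $\int_\Omega|w|^2\,dx=\iint|\widetilde w|^2(1-t\kappa)\,ds\,dt$ then produces a bound of the form $q_{h,\zeta}(w)\ge -(1+\mathcal O(h^\rho))h\,\|w\|^2_{L^2(\Omega)}$, which for $h$ small enough leaves a comfortable margin for the stated inequality.

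The main technical subtlety I anticipate is pinning down the explicit constant $\tfrac12$ in front of $h\|w\|^2_{L^2(\Omega)}$: the naive one-dimensional Robin model sketched above only yields the coefficient $1+o(1)$. I expect the sharpened constant is obtained either by retaining the positive $s$-derivative piece rather than dropping it and exploiting the magnetic potential $\widetilde A_1=-t+\tfrac{t^2}{2}\kappa(s)$, or, more directly, via a tuned Cauchy--Schwarz inequality that balances only a fraction of $\int|h\partial_t\widetilde w|^2$ against the boundary integral $h^{3/2}\int|\widetilde w(\cdot,0)|^2\,ds$ using the IBP identity $|\widetilde w(s,0)|^2=-2\,\text{Re}\int_0^{2h^\rho}\overline{\widetilde w}\,\partial_t\widetilde w\,dt$, leaving the remaining fraction as positive slack; the smallness assumption on $h_0$ absorbs the curvature error.
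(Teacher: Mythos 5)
Your route --- pass to boundary coordinates, discard the tangential part, reduce to a one-parameter family of one-dimensional Robin forms, and invoke $\inf\sigma(\mathcal H_{0,0})=-1$ --- is essentially the paper's. One minor difference: rather than a local gauge transformation plus a subordinate partition of unity (which would require you to track the commutator errors $h^2|\nabla f_j|^2$, something your sketch omits), the paper first applies the pointwise diamagnetic inequality $|(h\nabla-i\zeta\Ab_0)w|\ge|h\nabla|w||$ and works with $v=|w|$ in boundary coordinates; this removes the magnetic potential globally and makes the reduction to the normal direction entirely clean.

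The substantive issue is the constant, and here you should stop short of the ``sharpenings'' you propose. Your second paragraph has the direction wrong: $-(1+\mathcal O(h^\rho))h\|w\|^2_{L^2(\Omega)}$ is \emph{more} negative than $-\tfrac h2\|w\|^2_{L^2(\Omega)}$, so it does not imply the stated inequality, as you correctly notice a moment later. More importantly, neither retaining the tangential piece nor a tuned Cauchy--Schwarz can recover the coefficient $\tfrac12$, because the inequality as printed is false: for $w$ a suitably gauge-transformed copy of the one-dimensional profile $u_0$ from \eqref{eq:ef-u0}, cut off at $t\sim h^\rho$ and localized in $s$ near a single boundary point, one computes $q_{h,\zeta}(w)/\|w\|^2_{L^2(\Omega)}\le -c\,h$ with $c$ close to $1$ (even for $\rho=\tfrac12$, the lowest eigenvalue of $-d^2/d\tau^2$ on $(0,2)$ with Robin condition at $0$ and Dirichlet at $2$ is below $-\tfrac12$). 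In fact the paper's own proof, read carefully, also delivers only a bound of the form $\ge-(1+Ch^\rho)h\|w\|^2_{L^2(\Omega)}$: its final display contains a sign slip, since with $\lambda_1(\mathcal H_{0,0})=-1$ the claimed identity $-(1-Ch^\rho)h\lambda_1(\mathcal H_{0,0})=-(1-Ch^\rho)h$ cannot hold, and the Jacobian weight pushes the coefficient to $(1+Ch^\rho)$ rather than $(1-Ch^\rho)$. The numerical value is immaterial in every place the lemma is invoked --- \eqref{eq:decompa} in the proof of Theorem~\ref{thm:dec} and the analogous step in Lemma~\ref{lem:conc-gs} only need a bound of the form $\ge-Ch$. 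So your method matches the paper's and is essentially correct; what should change is the stated constant (say to $\tfrac32 h$), not your argument, and you should abandon the hunt for an improvement to $\tfrac12$.
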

\begin{proof}
Let $q_{h,\zeta}(\cdot)$ be the quadratic form in \eqref{mag-Lap:form}. The diamagnetic inequality yields,
$$
q_{h,\zeta}(w)\geq
\int_\Omega\big|\,h\nabla|w|\,\big|^2\,dx-h^{3/2}\int_{\partial\Omega}|w|^2\,ds(x)\,.
$$
In boundary coordinates, the inequality reads (cf. \eqref{eq:bc;qf}),
$$
q_{h,\zeta}(w)\geq
(1-Ch^{\rho})\iint\big|\,h\nabla v\,\big|^2\,dsdt-h^{3/2}\int|v(s,t=0)|^2\,ds\,,
$$
where $v=|w\circ \Phi(s,t)|$ (cf. \eqref{eq:ut}). Applying the change of the variable $t=h^{1/2}\tau$ and comparing with the operator in \eqref{defH00}, we get,
$$
q_{h,\zeta}(w)\geq
-(1-Ch^{\rho})h\lambda_1(\mathcal H_{0,0})\iint|v(s,t)|dsdt=-(1-Ch^{\rho})h\iint|v(s,t)|dsdt\,.
$$
Returning back to Cartesian coordinates, we get the inequality in Lemma~\ref{lem:dec*}.
\end{proof}

\begin{proof}[Proof of Theorem~\ref{thm:dec}]
The proof is similar to that of Theorem~5.1 in \cite{HeKa}. Let $t(x)={\rm dist }(x,\partial\Omega)$ and
$\Phi(x)=\exp(\frac{\alpha\, t(x)}{h^{1/2}})$. We perform an integration by parts to write the following identity,
\begin{equation}\label{eq:decomp}
\begin{aligned}
q_h^\Phi(u_{h,\zeta})&:=\int_\Omega\left(|(h\nabla-i\zeta\Ab_0)(\Phi\,u_{h,\zeta})|^2-h^2|\nabla
\Phi|^2|u_{h,\zeta}|^2\right)\,dx-h^{3/2}\int_{\partial\Omega}|\Phi\,u_{h,\zeta}|^2\,ds(x)\\
&=\lambda_1(h,\zeta)\|\Phi\,u_{h,\zeta}\|^2_{L^2(\Omega)}\,.
\end{aligned}
\end{equation}
Consider a partition of unity of $\R$
$$\chi_1^2+\chi_2^2=1\,,$$
{such that $\chi_1=1$ in $(-\infty,1)$, ${\rm supp }\chi_1\subset
(-\infty,2)$, $\chi_1\geq 0$ and $\chi_2\geq 0$ in $\R$. }

Define
$$\chi_{j,h}(x)=\chi_j\left(\frac{t(x)}{h^{1/2}}\right)\,,\quad
j\in\{1,2\}\,.$$ Associated with this partition of unity,  we have
the simple standard decomposition
\begin{equation}\label{eq:decomp*}
q_{h,\zeta}^\Phi(u_{h,\zeta})=\sum_{j=1}^2 q_{j,h,\zeta}^\Phi(\,u_{h,\zeta})\,,\end{equation}
where (by Lemma~\ref{lem:dec*})
\begin{equation}\label{eq:decompa}
\begin{aligned}
q_{1,h,\zeta}^\Phi(u_{h,\zeta})&=\int_\Omega\left(|(h\nabla-i\zeta\Ab_0)(\chi_{1,h}\,\Phi\,u_{h,\zeta})|^2-h^2|\nabla
(\chi_{1,h}\Phi)|^2|u_{h,\zeta}|^2\right)\,dx\\
&\qquad-h^{3/2}\int_{\partial\Omega}|\chi_{1,h}\Phi\,u_{h,\zeta}|^2\,ds(x)\\
&\geq -\frac{h}2\int_\Omega|\chi_{1,h}\,\Phi\,u_{h,\zeta}|^2\,dx-Ch\int_\Omega|\Phi\,u_{h,\zeta}|^2\,dx\geq -Ch \,,
\end{aligned}
\end{equation}
and 
\begin{equation}\label{eq:decompb}
 q_{2,h,\zeta}^\Phi(u_{h,\zeta})=\int_\Omega  \left( |(h\nabla-i\zeta\Ab_0)(\chi_{2,h}\,\Phi\,u_{h,\zeta})|^2 - h^2|\nabla
(\chi_{2,h}\Phi)|^2|u_{h,\zeta}|^2\right) \,dx\,.
\end{equation}
%{\clr Il me semble que beaucoup de termes \'etaient oubli\'es\\}
The definition of $\Phi$ and the fact $|\nabla t(x)|=1$ a.e. together yield
$$\int_\Omega|\nabla
(\chi_{2,h}\Phi)|^2|u_{h,\zeta}|^2\,dx\leq \alpha^2h\int_\Omega |\chi_{2,h}\Phi u_{h,\zeta}|^2\,dx+Ch\int_\Omega|u_{h,\zeta}|^2\,dx\,.$$
We insert this and \eqref{eq:decompa} into \eqref{eq:decomp}, write $\lambda_1(h,\zeta)\leq \frac12(-1-\alpha^2)$ by Proposition~\ref{prop:ub}  and rearrange the terms to obtain,  
\begin{equation}\label{eq:decomp1bis}
\int_\Omega\left(|(h\nabla-i\zeta\Ab_0)(\chi_{2,h}\,\Phi\,u_{h,\zeta})|^2+\frac12(1-\alpha^2
) h |\chi_{2,h}\,\Phi\,u_{h,\zeta}|^2  \right)\,dx\leq  C
h\,.\end{equation} 
This is enough to deduce the estimate in Theorem~\ref{thm:dec}.
\end{proof}

We record the following simple corollary of Theorem~\ref{thm:dec}. 
%The notation $\mathcal O(h^\infty)$ stands for a quantity satisfying for all $N\in\mathbb N$, $|\mathcal O(h^\infty)|\leq C_Nh^N$ in $(0,h_N)$, for some constants $C_N>0$ and $h_N\in(0,1)$.

\begin{corollary}\label{corol:dec}
Let $\rho\in(0,\frac12)$, $\epsilon>0$ and $0<c_1<c_2$. There exists $h_0\in(0,1)$ such that, for all $h\in (0,h_0)$ and $\zeta\in(c_1h^\epsilon,c_2h^\epsilon)$, every $L^2$-normalized ground state $u_{h,\zeta}$ of the operator $P_{h,\zeta}$ satisfies,
$$\int_{c_1h^{\rho}\leq {\rm dist}(x,\partial\Omega)\leq c_2h^{\rho}}|u_{h,\zeta}|^2\,dx\leq\exp\left(-c_1h^{\rho-\frac12}\right)\,.$$ 
\end{corollary}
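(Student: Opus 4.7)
The plan is to read this off directly from Theorem~\ref{thm:dec} by exploiting the strict inequality $\rho < \frac12$. Concretely, I would fix any $\alpha\in(\frac12,1)$ and apply Theorem~\ref{thm:dec} with this $\alpha$, obtaining constants $C>0$ and $h_0\in(0,1)$ such that, for all admissible $h$ and $\zeta$,
$$\int_\Omega |u_{h,\zeta}(x)|^2 \exp\!\left(\frac{2\alpha\,{\rm dist}(x,\partial\Omega)}{h^{1/2}}\right)dx \leq C.$$

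On the annulus $A_h=\{x\in\Omega \,:\, c_1h^\rho\leq {\rm dist}(x,\partial\Omega)\leq c_2h^\rho\}$, the exponential weight admits the uniform lower bound $\exp(2\alpha c_1 h^{\rho-1/2})$. Using this bound inside the integral and rearranging gives
$$\int_{A_h} |u_{h,\zeta}|^2\,dx \;\leq\; C\exp\!\left(-2\alpha c_1 h^{\rho-1/2}\right).$$
Since $\rho-\frac12<0$, the quantity $h^{\rho-1/2}$ tends to $+\infty$ as $h\to 0_+$. Because $2\alpha>1$, the multiplicative constant $C$ can be absorbed into the exponential: for $h$ smaller than a new threshold $h_0'$, one has $C\exp(-2\alpha c_1 h^{\rho-1/2})\leq \exp(-c_1 h^{\rho-1/2})$, which is exactly the claimed inequality.

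There is no real obstacle here; the statement is essentially a repackaging of the Agmon estimate. The only point that demands a moment's thought is the choice $\alpha>\frac12$, which provides the strict slack $2\alpha>1$ needed to swallow the prefactor $C$ inside the exponential, and the observation that $\rho<\frac12$ is precisely what makes the exponential genuinely decaying rather than just bounded.
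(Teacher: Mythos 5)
Your argument is correct and is exactly what the paper intends: the corollary is stated immediately after Theorem~\ref{thm:dec} with no proof because it is precisely this one-line deduction (lower-bound the Agmon weight on the annulus, then absorb the constant $C$ using the strict slack $2\alpha>1$ and $\rho<\tfrac12$). Nothing further is needed.
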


\subsection{Lower bound for the principal eigenvalue}

\begin{proposition}\label{prop:lb}
Let $\epsilon>0$ and $0<c_1<c_2$.  There exist constants $C>0$, $h_0\in(0,1)$ and $r^*>\frac32$ such that, for all $h\in(0,h_0)$ and $\zeta\in(c_1h^\epsilon,c_2h^\epsilon)$, the ground state energy in \eqref{mag-Lap:gse} satisfies,
$$\lambda_1(h,\zeta)\geq -h+b_\epsilon(\zeta)h-\kappa_{\max}h^{3/2}-C h^{r^*}\,,$$
where $b_\epsilon(\zeta)$  is  introduced in \eqref{eq:b-e-z}.
\end{proposition}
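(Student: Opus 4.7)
The strategy mirrors the upper bound of Proposition~\ref{prop:ub}: first localize a ground state to a thin tubular neighborhood of $\partial\Omega$ via Agmon estimates, then pass to the boundary coordinates $(s,t)$ with the gauge transformation \eqref{eq:gaugeA0}, and finally fiber the rescaled operator over a tangential momentum $\xi$ to reduce everything to the 1D operators $\Hw$ of Section~\ref{sec:Hw}, whose principal eigenvalues are controlled by Propositions~\ref{lem:H0b;l} and \ref{lem:H0b;l*}.

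Let $u_{h,\zeta}$ be an $L^2$-normalized ground state. By Theorem~\ref{thm:dec} and Corollary~\ref{corol:dec}, truncating $u_{h,\zeta}$ to $\{t(x)\leq 2h^\rho\}$ for a suitable $\rho\in(0,\tfrac12)$ perturbs the Rayleigh quotient by at most $\mathcal O(h^\infty)$. Inside this tubular neighborhood I use the boundary coordinates and the local gauge of \eqref{eq:gaugeA0}, so that the quadratic form becomes an integral in $(s,t)$ involving the tangential magnetic potential $\zeta(-t+\tfrac12 t^2\kappa(s))$, the trace term $-h^{3/2}\int|v(s,0)|^2\,ds$, and the Jacobian $1-t\kappa(s)$. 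I then introduce an IMS partition of unity $\sum_j\chi_j(s)^2=1$ on $\partial\Omega$ with $\supp \chi_j$ of length $\sim h^{\epsilon/2}$; the IMS remainder is $\mathcal O(h^{2-\epsilon})$, and on each cell the curvature can be frozen to $\kappa_j:=\kappa(s_j)$ at the cost of $\mathcal O(h^{3/2+\epsilon/2})$ in the form.

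After rescaling $t=h^{1/2}\tau$, each cell carries a quadratic form whose $\xi$-fiber (with respect to the tangential momentum dual to $s/h^{1/2}$) coincides with $h$ times the form of $\mathcal H_{\zeta,\kappa_j,\xi,h}$ from \eqref{eq:H0b}: set $\beta=\kappa_j$, let the $mh^\sigma$ slack absorb the residual $|\kappa(s)-\kappa_j|$, and pick $\delta=\tfrac12-\rho$ so that the interval $(0,h^{-\delta})$ covers the Agmon-rescaled support. To extract the $\xi$ parameter I extend $\chi_j v$ to an $h^{\epsilon/2}$-periodic function in $s$ at the cost of an additional $\mathcal O(h^\epsilon)$ form error, then expand in the corresponding Fourier basis; the resulting discrete momenta $\{\xi_k\}$ are dense enough in $\R$ that their contribution differs from $\inf_{\xi\in\R}\lambda_1(\Hw)$ by $o(h^r)$, thanks to the continuity of $\xi\mapsto\lambda_1(\Hw)$. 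Invoking Proposition~\ref{lem:H0b;l} for $\epsilon<\tfrac14$, Proposition~\ref{lem:H0b;l*} for $\epsilon=\tfrac14$, and, for $\epsilon>\tfrac14$, Lemma~\ref{lem:H0b}(1) together with the non-magnetic Robin result of \cite{HeKa,Pan}, each fiber satisfies $\lambda_1(\mathcal H_{\zeta,\kappa_j,\xi,h})\geq -1+b_\epsilon(\zeta)-\kappa_j h^{1/2}-Ch^r$ uniformly in $\xi$. Bounding $\kappa_j\leq\kappa_{\max}$, summing over $j$, and restoring the overall factor $h$ yields the claimed lower bound with $r^*$ equal to the minimum of $1+r$, $2-\epsilon$, and $3/2+\epsilon/2$, which is strictly larger than $3/2$ for an appropriate choice of $\rho,\delta,\sigma$.

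The main technical obstacle is the fiber decomposition on a bounded $s$-interval: since the partition of unity confines each piece to a compact set, the natural continuous Fourier decomposition in $s$ is not directly available. My fix is to enforce periodicity by a smooth cutoff at scale $h^{\epsilon/2}$ (absorbing the correction into the $\mathcal O(h^\epsilon)$ quadratic-form error), producing a Floquet-type discrete family $\{\xi_k\}$ with spacing $\mathcal O(h^{\epsilon/2})$ that is fine enough to approximate the continuous infimum. A secondary bookkeeping point is matching the $\tau$-cutoff $h^{-\delta}$ built into $\Hw$ with the Agmon scale $h^{\rho-1/2}$; this is arranged by setting $\delta=\tfrac12-\rho$ so that the rescaled support lies inside $(0,h^{-\delta})$.
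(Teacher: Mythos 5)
The overall architecture — Agmon localization, boundary coordinates with the gauge \eqref{eq:gaugeA0}, an IMS partition on the boundary, freezing of the curvature, rescaling $t=h^{1/2}\tau$, fibering over the tangential momentum, and invoking Propositions~\ref{lem:H0b;l} and \ref{lem:H0b;l*} — matches the paper's proof of Lemma~\ref{lem:lb} (for $\epsilon\leq\frac14$) plus Lemma~\ref{lem:lb***} (for $\epsilon>\frac14$). However, your identified ``main technical obstacle'' is not an obstacle, and the fix you propose for it would actually break the estimate.

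Once the curvature has been frozen to $\kappa_j$ inside the cell, the quadratic form on the half-strip $\R_s\times(0,h^{-\delta})_\tau$ (with the $s$-independent weight $1-\kappa_j h^{1/2}\tau$) is invariant under translations in $s$ and is therefore a direct integral over $\xi\in\R$ of the fiber forms associated with $\Hw$. The piece $f_j\chi_{1,h}w$, being compactly supported in $s$, is an ordinary element of $L^2(\R_s)$ after extension by zero, and the continuous Fourier transform in $s$ applies with no modification. This is exactly what the paper does in Step~2 of the proof of Lemma~\ref{lem:lb} to arrive at \eqref{eq:bnd-t**}; there is no bounded-interval problem. Moreover, your Floquet ``density'' argument is not even needed for a lower bound: the fiber estimate of Proposition~\ref{lem:H0b;l} is uniform in $\xi\in\R$, so it holds for any discrete set of $\xi_k$ without invoking density or continuity.

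The periodization fix is not merely superfluous — the $\mathcal O(h^\epsilon)$ quadratic-form error you assign to it would be fatal. The form itself is of size $\mathcal O(h)$, so a relative error $\mathcal O(h^\epsilon)$ produces an additive error $\mathcal O(h^{1+\epsilon})$, which for $\epsilon<\frac12$ dominates the $h^{3/2}$ curvature term you are trying to isolate, and in any case does not appear in your final $r^*=\min(1+r,2-\epsilon,\tfrac32+\tfrac\epsilon2)$. Separately, your error budget omits the contribution coming from approximating the tangential gauge potential $\zeta t(1-\tfrac12 t\kappa(s))$ by its frozen counterpart $\zeta t(1-\tfrac12 t\kappa_j)$; squaring the $\mathcal O(\zeta t^2 h^\sigma)$ discrepancy produces the $\zeta^2 t^4 h^{2\sigma-\frac12}$ term tracked in \eqref{eq:bnd-t*}, which is what forces the particular coupled choice of $\rho$ and $\sigma$ in \eqref{eq:sigma}--\eqref{eq:rho} and contributes the $2\epsilon+4\rho+2\sigma-\frac12$ entry in \eqref{eq:r*}. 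Your choice of cell length $h^{\epsilon/2}$ (i.e.\ $\sigma=\epsilon/2$) ignores this constraint and cannot be ``appropriate'' without verifying it.
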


For $\epsilon>\frac14$, Proposition~\ref{prop:lb} follows  from:
\begin{lem}\label{lem:lb***}
Suppose that $\epsilon>\frac14$. Under the assumption in Proposition~\ref{prop:lb}, for all $u$ in the form domain of the operator $P_{h,\zeta}$,
$$q_{h,\zeta}(u)\geq \int_\Omega U_{h,\zeta}(x)|u|^2\,dx\,,$$
where
$$U_{h,\zeta}(x)=\left\{\begin{array}{ll}
-h-\kappa(s(x))h^{3/2}-Ch^{7/4}&{\rm if~}{\rm dist}(x,\partial\Omega)< 2h^{1/8}\,,\\
0&{\rm if~}{\rm dist}(x,\partial\Omega)\geq 2h^{1/8}\,,
\end{array}\right.
$$
and $q_{h,\zeta}(\cdot)$ is the quadratic form in \eqref{mag-Lap:form}. 
\end{lem}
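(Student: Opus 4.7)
I would combine an IMS partition of unity, the diamagnetic inequality (in the spirit of Lemma~\ref{lem:dec*}), boundary coordinates, and a fiberwise one-dimensional lower bound carried out via a weighted unitary transformation that absorbs the curvature into a shift of the effective Robin coefficient.

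Take a smooth partition $\chi_{1,h}^2+\chi_{2,h}^2=1$ at length scale $h^{1/8}$, with $\chi_{1,h}(x)=\chi_1(t(x)/h^{1/8})$ supported in $\{t(x)<2h^{1/8}\}$. The IMS identity
$$q_{h,\zeta}(u)=q_{h,\zeta}(\chi_{1,h}u)+q_{h,\zeta}(\chi_{2,h}u)-h^2\int_\Omega\bigl(|\nabla\chi_{1,h}|^2+|\nabla\chi_{2,h}|^2\bigr)|u|^2\,dx$$
produces a localization error bounded below by $-Ch^{7/4}\int_{\{t(x)<2h^{1/8}\}}|u|^2\,dx$ (the gradients live in the overlap, which lies inside the near-boundary strip). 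Since $\chi_{2,h}u$ vanishes on $\partial\Omega$ its Robin contribution is zero and $q_{h,\zeta}(\chi_{2,h}u)\geq 0$ trivially. For the near-boundary piece, the diamagnetic inequality gives, with $w:=\chi_{1,h}|u|$,
$$q_{h,\zeta}(\chi_{1,h}u)\geq\int_\Omega|h\nabla w|^2\,dx-h^{3/2}\int_{\partial\Omega}|w|^2\,ds.$$

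Passing to the boundary coordinates of Section~\ref{sec:app} and dropping the pointwise nonnegative tangential term $(1-t\kappa(s))^{-2}|h\partial_s\tilde w|^2$ leaves the fiberwise form
$$Q_s(\tilde w):=\int_0^\infty|h\partial_t\tilde w|^2(1-t\kappa(s))\,dt-h^{3/2}|\tilde w(s,0)|^2,\qquad\|\tilde w(s,\cdot)\|_s^2=\int_0^\infty|\tilde w|^2(1-t\kappa(s))\,dt.$$
The core estimate is $Q_s(\tilde w)\geq\bigl(-h-\kappa(s)h^{3/2}-Ch^2\bigr)\|\tilde w(s,\cdot)\|_s^2$. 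To prove it, rescale $t=h^{1/2}\tau$ and apply the isometry $\widetilde W(\tau)=\tilde a(\tau)^{1/2}\tilde w(s,h^{1/2}\tau)$, $\tilde a(\tau)=1-h^{1/2}\tau\kappa(s)$, which is unitary from $L^2(\tilde a\,d\tau)$ onto $L^2(d\tau)$. A direct integration by parts recasts the rescaled Rayleigh quotient as $h$ times
$$\|\widetilde W\|_{L^2(d\tau)}^{-2}\Biggl[\int_0^\infty|\widetilde W'|^2\,d\tau-\gamma(s)|\widetilde W(0)|^2-\int_0^\infty\frac{h\kappa(s)^2}{4\tilde a^2}|\widetilde W|^2\,d\tau\Biggr],\quad\gamma(s)=1+\tfrac{h^{1/2}\kappa(s)}{2}.$$
The standard Robin bound $\int|\widetilde W'|^2-\gamma|\widetilde W(0)|^2\geq-\gamma^2\|\widetilde W\|^2$ (for $\gamma>0$), combined with the expansion $-\gamma(s)^2=-1-h^{1/2}\kappa(s)-h\kappa(s)^2/4$ and $\tilde a\geq 1/2$ for small $h$, finishes the fiber estimate.

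Integrating in $s$, returning to Cartesian coordinates, and assembling the three pieces --- the fiberwise bound, $q_{h,\zeta}(\chi_{2,h}u)\geq 0$, and the IMS error --- gives $q_{h,\zeta}(u)\geq\int_\Omega U_{h,\zeta}|u|^2\,dx$; the localization is absorbed using the pointwise inequality $(-h-\kappa(s)h^{3/2})\chi_{1,h}^2\geq -h-\kappa(s)h^{3/2}$, valid in the near-boundary strip because the coefficient is nonpositive for small $h$. The main obstacle is the one-dimensional step: one must verify that the isometry $\widetilde W=\tilde a^{1/2}\tilde w$ converts the natural Robin boundary condition of the weighted form into exactly $\widetilde W'(0)=-\gamma(s)\widetilde W(0)$, so that $-\gamma(s)^2$ is truly the infimum of the transformed Robin form and the full curvature coefficient $-\kappa(s)h^{3/2}$ (rather than $-\kappa(s)h^{3/2}/2$ that a naive $u_0$-trial would predict) appears in the bound.
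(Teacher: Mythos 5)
Your proof is correct and essentially self-contained, whereas the paper's proof of this lemma is just a one-line citation: the diamagnetic inequality reduces the magnetic form to the non-magnetic Robin form, and then the curvature-dependent lower bound for the Robin Laplacian is quoted from Helffer--Kachmar~\cite[Thm.~5.2]{HeKa}. What you have done differently is to prove that one-dimensional lower bound from scratch via the unitary conjugation $\widetilde W=\tilde a^{1/2}\tilde w$, which after integration by parts transfers the weight $\tilde a=1-h^{1/2}\tau\kappa(s)$ into a shifted Robin coefficient $\gamma(s)=1+\tfrac12h^{1/2}\kappa(s)$ plus a sub-leading potential $-\tfrac{h\kappa^2}{4\tilde a^2}$; the form-theoretic bound $\inf_{W\in H^1(\R_+)}\bigl(\|W'\|^2-\gamma|W(0)|^2\bigr)/\|W\|^2=-\gamma^2$ then yields the fiberwise potential $-h-\kappa(s)h^{3/2}-Ch^2$, and the IMS error at scale $h^{1/8}$ contributes the extra $-Ch^{7/4}$. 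This route buys a fully self-contained argument at the price of redoing the Robin-Laplacian analysis; the paper's route is shorter because it leans on the reference. Two small remarks. First, the ``obstacle'' you flag at the end is not actually one: since $-\gamma^2$ is the infimum of the transformed quadratic form over all of $H^1(\R_+)$, no identification of the minimizer's boundary condition is needed, and the argument goes through at the form level. Second, your parenthetical claim that a naive $u_0$-trial ``would predict $-\kappa h^{3/2}/2$'' is not accurate: plugging $u_0$ into the weighted form and normalizing by the weighted norm $\int|u_0|^2\tilde a\,d\tau$ already produces the correct full coefficient $-\kappa h^{3/2}$ (the half-coefficients in numerator and denominator combine to the full one when one divides). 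This does not affect the validity of your lower bound, which nowhere relies on that comparison, but the remark as written misattributes the role of the unitary transformation.
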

\begin{proof}
This is a consequence  of the diamagnetic inequality and \cite[Thm.~5.2]{HeKa}.
\end{proof}

In the case $\epsilon\leq \frac14$, Proposition~\ref{prop:lb} is a consequence of Lemma~\ref{lem:lb} below (applied with $w=u_{h,\zeta}$ and $u_{h,\zeta}$ a $L^2$ normalized ground state of the operator $P_{h,\zeta}$) and the variational min-max principle.

The constant $r^*$ in Proposition~\ref{prop:lb}  depends on $\epsilon$. It is introduced as follows. For all $\epsilon>0$, let 
\begin{equation}\label{eq:sigma}
\sigma=
\left\{
\begin{array}{ll}
\frac15\min\big(2\epsilon,1-4\epsilon\big)&{\rm if}~\epsilon<\frac14\,,\\
1/8&{\rm if}~\epsilon\geq\frac14\,,\\
%1/8&{\rm if}~\epsilon>\frac14\,,
\end{array}
\right.
\end{equation}
\begin{equation}\label{eq:rho}
\rho=
\left\{
\begin{array}{ll}
\frac12-\frac{1}4\min\big(2\epsilon,1-4\epsilon\big)&{\rm if~}\epsilon<1/4\\
7/16&{\rm if~}\epsilon=1/4\,,\\
1/8&{\rm if~}\epsilon>1/4\,,
\end{array}
\right.
\end{equation}
\begin{equation}\label{eq:r*}
r^*=\left\{\begin{array}{ll}
\min\Big(1+(2n+2)\epsilon,\frac32+2\epsilon,\frac32+\sigma,2\epsilon+4\rho+2\sigma-\frac12,1+\sigma+\rho,2-2\sigma\Big)&{\rm if~}\epsilon\leq\frac14\,,\\
\frac74&{\rm if~}\epsilon>\frac14\,,
\end{array}\right.\end{equation}
where $n\in\mathbb N$ is the smallest positive integer satisfying $(2n+2)\epsilon>\frac12$.

Note that $0<\rho<\frac12$, $0<\sigma<1$ and when $\epsilon\leq \frac14$, the following three conditions are satisfied
\begin{equation}\label{eq:3cond}
\left\{
\begin{array}{l}
2\epsilon+4\rho+2\sigma-\frac12>\frac32\,,\\
%0<\frac12-\rho<\frac12-2\epsilon\,,\\
2-2\sigma>\frac32\,,\\
\rho+\sigma>\frac12\,.
\end{array}\right.\end{equation}
Consequently, for all $\epsilon>0$, the number $r^*$ satisfies 
$$r^*>\frac32\,.$$

\begin{lem}\label{lem:lb}
Let $M>0$, $0<\epsilon\leq \frac14$ and $0<c_1<c_2$.
There exist  two constants $C>0$ and $h_0\in(0,1)$ such that, for all $h\in(0,h_0)$ and 
$\zeta\in(c_1h^\epsilon,c_2h^\epsilon)$, if $w$ is a $L^2$ normalized function in the form domain of  the operator $P_{h,\zeta}$ and
\begin{equation}\label{eq:cond-dec}
\left\|\exp\left(\frac{{\rm dist}(x,\partial\Omega)}{2h^{\rho'}}\right)w\right\|_{L^2(\Omega)}\leq M\,,
\end{equation}
for some $\rho<\rho'<\frac12$, 
then it holds the following:
\begin{equation}\label{eq:bnd-lb1*}
q_{h,\zeta}(w)\geq \int_\Omega U_{h,\zeta}(x) |w|^2\,dx-Ch^2.\end{equation}
Here
\begin{itemize}
\item $U_{h,\zeta}(x)=\left\{\begin{array}{ll}
-h+b_\epsilon(\zeta)-\kappa(s(x))h^{3/2}-Ch^{r^*}&{\rm if~}{\rm dist}(x,\partial\Omega)<2h^{\rho}\,,\\
0&{\rm if~}{\rm dist}(x,\partial\Omega)\geq 2h^{\rho}\,;
\end{array}\right.
$
\item $\sigma$, $\rho$ and $r^*$ are introduced in \eqref{eq:sigma}, \eqref{eq:rho} and \eqref{eq:r*} respectively\,;
\item   $b_\epsilon(\zeta)$ is introduced in \eqref{eq:b-e-z}\,;
\item   $q_{h,\zeta}(\cdot)$ is the quadratic form introduced in \eqref{mag-Lap:form}.
\end{itemize}
\end{lem}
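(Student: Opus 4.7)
The plan is a two-scale localization argument --- first in the normal direction, then tangential along $\partial\Omega$ --- that reduces $q_{h,\zeta}(w)$ to a direct integral of one-dimensional model operators $\mathcal H_{\zeta,\beta,\xi,h}$ to which Propositions~\ref{lem:H0b;l} and \ref{lem:H0b;l*} apply fiberwise. First, I would introduce a smooth IMS partition $\chi_1^2+\chi_2^2=1$ with $\chi_1$ supported in $\{{\rm dist}(x,\partial\Omega)<2h^\rho\}$ and equal to $1$ in $\{{\rm dist}(x,\partial\Omega)<h^\rho\}$. The IMS localization formula gives
$$q_{h,\zeta}(w)=q_{h,\zeta}(\chi_1 w)+q_{h,\zeta}(\chi_2 w)-h^2\int_\Omega\big(|\nabla\chi_1|^2+|\nabla\chi_2|^2\big)|w|^2\,dx.$$
Since $\chi_2 w$ vanishes near $\partial\Omega$, the Robin boundary integral disappears and the diamagnetic inequality yields $q_{h,\zeta}(\chi_2 w)\geq 0$. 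The partition cost, supported in $\{h^\rho\leq{\rm dist}(x,\partial\Omega)\leq 2h^\rho\}$, is $\mathcal O(h^\infty)$ by the decay hypothesis \eqref{eq:cond-dec} since $\rho<\rho'$.

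Next, I would pass to the boundary coordinates of Subsection~\ref{sec:app} and cover the tangential variable $s$ by arcs of length $\sim h^\sigma$ centered at points $s_j$, with an adapted partition $\sum_j\eta_j^2=1$. A second IMS step costs at most $Ch^{2-2\sigma}\leq Ch^{r^*}$ by \eqref{eq:3cond}. On the $j$-th patch I would apply the local gauge \eqref{eq:gaugeA0} to replace $\widetilde{\mathbf A}$ by $(-t+t^2\kappa(s)/2,0)$ and then freeze $\kappa(s)=\beta_j+\mathcal O(h^\sigma)$ with $\beta_j=\kappa(s_j)$. After the rescaling $t=h^{1/2}\tau$ and a partial Fourier decomposition in $s$ with fiber variable $\xi$, each fiber coincides with $h\cdot\mathcal H_{\zeta,\beta_j,\xi,h}$ from \eqref{eq:H0b}: the Dirichlet truncation at $\tau=h^{-\delta}$ with $\delta=1/2-\rho$ is enforced by the support of $\chi_1$; the freezing error is absorbed into the parameter $mh^\sigma$; and the expansion of $a^{-2}=(1-h^{1/2}\tau\kappa)^{-2}$ feeds into the perturbation $\Delta_{\beta,\tau}$.

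Applying Proposition~\ref{lem:H0b;l} (when $\epsilon<1/4$) or Proposition~\ref{lem:H0b;l*} (when $\epsilon=1/4$) fiberwise gives, uniformly in $\xi$,
$$\mathcal H_{\zeta,\beta_j,\xi,h}\geq -1+b_\epsilon(\zeta)-\beta_j h^{1/2}-Ch^{r^*-1}.$$
Multiplying by $h$, reassembling the fibers and patches, and replacing $\kappa(s_j)h^{3/2}$ by $\kappa(s(x))h^{3/2}$ at an additional cost $Ch^{3/2+\sigma}\leq Ch^{r^*}$, I would obtain $q_{h,\zeta}(\chi_1 w)\geq\int_\Omega U_{h,\zeta}|\chi_1 w|^2\,dx-Ch^2$; combined with the first step this yields~\eqref{eq:bnd-lb1*}.

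The main obstacle is the bookkeeping in the last two steps: one must verify that each of the six quantities in the minimum defining $r^*$ in \eqref{eq:r*} arises naturally as an error term --- the first from Theorem~\ref{thm:Hr}, the next two from the $\Delta_{\beta,\tau}$ and $mh^\sigma$ perturbations of \eqref{eq:H0b}, the fourth and fifth from the cross-terms in the expansion of $a^{-2}\big(\zeta t(1-t\kappa/2)-h\partial_s\big)^2$ after gauge, and the last from the tangential partition cost --- and that the three inequalities in \eqref{eq:3cond} are precisely what guarantees each exceeds $3/2$.
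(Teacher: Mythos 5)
Your proposal reproduces the paper's proof step for step: the same normal IMS localization with the partition cost killed by \eqref{eq:cond-dec}, the same tangential partition of unity at scale $h^\sigma$, the same gauge/freezing/rescaling/Fourier reduction to the fiber operators $\Hw$ of \eqref{eq:H0b}, and the same invocation of Propositions~\ref{lem:H0b;l} and \ref{lem:H0b;l*} followed by reassembly. This is the approach in the paper; no gap or deviation.
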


\begin{proof}[Proof of Lemma~\ref{lem:lb}]~

The lengthy proof of Lemma~\ref{lem:lb} is divided into four steps. 

{\bf Step~1. Localization near the boundary.}

Consider a partition of unity of $\R$,
$$
\chi_1^2+\chi_2^2=1
$$
with $\chi_1=1$ in $(-\infty,1]$, ${\rm supp}\chi_1\subset(-\infty,2]$ and ${\rm
supp}\chi_2\subset[1,\infty)$.
For $j\in\{1,2\}$, put,
$$
\chi_{j,h}(x)=\chi_j\left(\frac{{\rm
dist}(x,\partial\Omega)}{h^{\rho}}\right)\,.
$$
We have the  decomposition
$$q_{h,\zeta}(w)=q_{h,\zeta}(\chi_{1,h}w)+q_{h,\zeta}(\chi_{2,h}w)-h^2\sum_{j=1}^2\big\|\,|\nabla\chi_{j,h}|w\,\big\|^2_{L^2(\Omega)}\,,$$
where
%{\clr Supprimer $\chi_{1,h}$ dans le deuxi\`eme terme du membre de
%droite ?\\}
$$
q_{h,\zeta}(\chi_{2,h}w)=\int_\Omega|h\nabla
(\chi_{2,h}\,w)|^2\,dx\geq0\,,
$$
and by \eqref{eq:cond-dec},  
$${h^2} \big\|\,|\nabla\chi_{j,h}|w\,\big\|^2_{L^2(\Omega)}\leq
h^{2-2\rho}\exp\left(-\frac14h^{\rho-\rho'}\right)=\mathcal O(h^\infty)\,.$$
Thus,
\begin{equation}\label{eq:=bnd}
q_{h,\zeta}(w)\geq q_{h,\zeta}(\chi_{1,h}\,w)+\mathcal O(h^\infty)\,.
\end{equation}

{\bf Step~2. Analysis near the boundary.}

Let us cover the boundary $\partial\Omega$ by a family of open disks $(B(x_j,h^{1/8}))$. Let $(f_j)\subset C^\infty(\partial\Omega)$ be a partition of unity in $\partial\Omega$ such that, for all $j$,
$${\rm supp}\, f_j\subset B(x_j,h^{\sigma})\cap\partial\Omega\,,\quad{\rm and}\quad |\nabla f_j|\leq Ch^{-\sigma}\,.$$
We extend $f_j$ in the tubular neighborhood $\{{\rm dist}(x,\partial\Omega)<2h^{\rho}\}$ of the boundary via the formula
$$f_j(x)=f_j(s(x))\,.$$
 We decompose the boundary term in \eqref{eq:=bnd} as follows,
\begin{equation}\label{eq:bnd-t}
\begin{aligned}
q_{h,\zeta}(\chi_{1,h}\,w)&=\sum_{j}q_{h,\zeta}(f_{j}\chi_{1,h}w)-h^2\sum_j\|\,|\nabla f_j|\chi_{1,h}w\|_{L^2(\Omega)}^2\\
&\geq \sum_{j}q_{h,\zeta}(f_{j}\chi_{1,h}w)-Ch^{2-2\sigma}\|\chi_{1,h}w\|_{L^2(\Omega)}^2\,.
\end{aligned}
\end{equation}
We will write a lower bound for each term  $q_{h,\zeta}(f_{j}\chi_{1,h}w)$ as follows. First, let us denote by
$$\kappa_j=\kappa(s(x_j))\,.$$
By smoothness of the scalar curvature and boundedness of the boundary, we know that
$$|\kappa(s(x))-\kappa_j|\leq m h^{\sigma}\quad{\rm in~}B(x_j,h^{\sigma})\,,$$
where 
$$m=\sup_{x\in\partial\Omega}|\kappa'(s(x))|\,.$$
That way we get the following pointwise lower bound in every $B(x_j,h^{\sigma})$,
$$
|(h\partial_s-i\zeta t(1-\frac12t\kappa(s))\widetilde w|^2\geq (1-h^{1/2})|(h\partial_s-i\zeta t(1-\frac12t\kappa_j)\widetilde w|^2-4\zeta^2t^4h^{2\sigma-\frac12}|\widetilde w|^2\,.
$$
Let $\phi_j=\phi_{x_j}$ be the function satisfying  \eqref{eq:gaugeA0} in $B(x_j,2h^\sigma+2h^\rho)$.
Define the function\break$v_j=\tilde f_j\tilde \chi_{1,h}\widetilde w\,e^{-i\varphi_j}$.
We express the quadratic form $q_{h,\zeta}(f_j\chi_{1,h}w)$ in boundary coordinates  and  then we use the aforementioned inequalities to write,
\begin{equation}\label{eq:bnd-t*}
\begin{aligned}
&q_{h,\zeta}(f_j\chi_{1,h}w)\geq\\
& \iint\Big(|h\partial_t v_j|^2+(1-h^{1/2})(1-t\kappa_j-Ch^\sigma t)^{-2}
|(h\partial_s-i\zeta t(1-\frac12t\kappa_j)\tilde w|^2 \Big)(1-\kappa_j t-mh^\sigma t)dsdt\\
&-\int|v_j(s,t=0)|^2ds-C\iint\zeta^2t^4h^{2\sigma-\frac12}|v_j|^2(1-\kappa_j t-mh^\sigma t)dsdt\,.
\end{aligned}
\end{equation}

Let 
\begin{equation}\label{eq:lb-parameters}
\delta=\frac12-\rho,\quad\beta=\kappa_j,\quad \Delta_{\beta,\tau}=h^{-1/2}\Big[(1-h^{1/2})\big(1-(\kappa_j+Ch^\sigma)h^{1/2}\tau\big)^{-2}-1\Big]\,.
\end{equation} 
Note that, for $t\in(0,h^{\frac12-\rho})$ and $\tau=h^{-\frac12}t$, 
$|\Delta_{\beta,\tau}|\leq C(|\beta|+1)\tau$. Thus, we can apply the results in Sec.~\ref{sec:Hw}.

We return back to \eqref{eq:bnd-t*}. Note that $\zeta=\mathcal O(h^\epsilon)$ and in the support of $v_j$, the term $t^4$ is of order $\mathcal O(h^{4\rho})$. We apply the change of variable $t=h^{1/2}\tau$ then the Fourier transform with respect to the variable $s$ to obtain,
\begin{equation}\label{eq:bnd-t**}
q_{h,\zeta}(f_j\chi_{1,h}w)\geq \iint \left\{\Big(h\inf_{\xi\in\R}\lambda_1(\Hw)\Big)-Ch^{2\epsilon+4\rho+2\sigma-\frac12}\right\} |f_j\chi_{1,h}w|^2(1-\kappa_jt-mh^\sigma t)\,dsdt\,.
\end{equation}

{\bf Step~3. Lower bound in the case $\epsilon<\frac14$.}

By the assumption on $\rho$ and $\sigma$, we find that
$$0<\delta=\frac12-\rho<\frac12-2\epsilon$$
so that we can apply Proposition~\ref{lem:H0b;l}. Let $r^*>\frac32$ be the constant introduced in \eqref{eq:r*}
 We infer from \eqref{eq:bnd-t**},
\begin{equation}\label{eq:bnd-t**1}
q_{h,\zeta}(f_j\chi_{1,h}w)\geq \iint \left\{-h+he_n(\zeta)-\kappa_jh^{1/2}-Ch^{r_*}\right\} |f_j\chi_{1,h}w|^2\,(1-t\kappa_j-mh^\sigma t)dsdt\,.
\end{equation}
Now, in \eqref{eq:bnd-t**}, we replace $\kappa_j$ by $\kappa(s)+\mathcal O(h^\sigma)$ and use that $\sigma+\rho>\frac12$ to replace the term $1-t\kappa_j-mh^\sigma t$ by $1-t\kappa(s)$  and get,
\begin{equation}\label{eq:bnd-t**1*}
q_{h,\zeta}(f_j\chi_{1,h}w)\geq \iint \left\{-h+he_n(\zeta)-\kappa(s)h^{1/2}-Ch^{r_*}\right\} |f_j\chi_{1,h}w|^2(1-t\kappa(s))\,dsdt\,.
\end{equation}
We insert \eqref{eq:bnd-t**1*} into \eqref{eq:bnd-t} and obtain
$$
\begin{aligned}
q_{h,\zeta}(\chi_{1,h}w)&\geq \sum_j\iint \left\{-h+he_n(\zeta)-\kappa(s)h^{1/2}-Ch^{r_*}\right\} |f_j\chi_{1,h}w|^2(1-t\kappa(s))\,dsdt-Ch^{2}\\
&=\int_\Omega\left\{-h+he_n(\zeta)-\kappa(s(x))h^{1/2}-Ch^{r_*}\right\}|\chi_{1,h}w|^2\,dx-Ch^2\,.
\end{aligned}
$$
Using that $|\chi_{1,h}w|\leq |w|$ and that $-h+he_n(\zeta)-\kappa(s)h^{1/2}-Ch^{r_*}<0$, we get further,
 $$
q_{h,\zeta}(\chi_{1,h}w)
=\int_\Omega\left\{-h+he_n(\zeta)-\kappa(s)h^{1/2}-Ch^{r_*}\right\}|w|^2\,dx-Ch^2\,.
$$
Finally, we insert this into \eqref{eq:=bnd} to get \eqref{eq:bnd-lb1*}.

{\bf Step~4. Lower bound in the case $\epsilon=\frac14$.}

The analysis here is similar to that in Step~3 and we will be rather succinct. Note that the assumption on $\delta$ and $\rho$ ensure that $0<\delta=\frac12-\rho=\frac1{16}<\frac18$ so that we can apply Proposition~\ref{lem:H0b;l*} and infer from \eqref{eq:bnd-t**},
\begin{equation}\label{eq:bnd-t***}
\begin{aligned}
q_{h,\zeta}(f_j\chi_{1,h}w)&
\geq \iint \left\{-h+\frac14\zeta^2h-\kappa_jh^{3/2}-Ch^{r^*}\right\} |f_j\chi_{1,h}w|^2(1-\kappa_jt-mh^\sigma t)\,dsdt\\
&\geq \iint \left\{-h+\frac14\zeta^2h-\kappa(s)h^{3/2}-Ch^{r^*}\right\} |f_j\chi_{1,h}w|^2(1-\kappa(s) t)\,dsdt\,.
\end{aligned}
\end{equation}
We insert this into \eqref{eq:bnd-t} and \eqref{eq:=bnd} to get \eqref{eq:bnd-lb1*} for $\epsilon=\frac14$.
\end{proof}

\subsection{Concentration of ground states near the points of maximal curvature}

\begin{thm}\label{thm:conc-gs}
Let $0<c_1<c_2$ and $\epsilon>0$. There exist constants $\rho\in(0,\frac12)$, $\eta^*\in(0,\frac14)$, $C>0$ and $h_0\in(0,1)$ such that, for all $h\in(0,h_0)$, $\zeta\in(c_1h^\epsilon,c_2h^\epsilon)$ and
$u_{h,\zeta}$ a normalized ground of the operator $P_{h,\zeta}$,
$$\int_{\{{\rm dist}(x,\partial\Omega)\leq h^\varsigma\}\cap\{\kappa_{\max}-\kappa(s(x))\geq h^{\eta^*}\}}|u_{h,\zeta}|^2\,dx
\leq C\exp\left(-h^{\eta^*-\frac14}\right)\,.$$  
\end{thm}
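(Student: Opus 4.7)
The plan is to adapt the Agmon-type argument from the proof of Theorem~\ref{thm:dec} to a weight that grows not in the normal direction to $\partial\Omega$, but in the tangential direction, specifically on the parts of the boundary layer where $\kappa(s(x))$ falls short of $\kappa_{\max}$. For a non-negative Lipschitz function $\phi$, set $\Phi=e^{\phi}$. The magnetic Agmon-Persson identity yields
$$q_{h,\zeta}(\Phi u_{h,\zeta})=\lambda_{1}(h,\zeta)\,\|\Phi u_{h,\zeta}\|^{2}_{L^{2}(\Omega)}+h^{2}\,\||\nabla\Phi|\,u_{h,\zeta}\|^{2}_{L^{2}(\Omega)}\,.$$
Applying Lemma~\ref{lem:lb} to $w=\Phi u_{h,\zeta}$ (the decay hypothesis \eqref{eq:cond-dec} is supplied by Theorem~\ref{thm:dec} since $\Phi$ is essentially bounded at the relevant scale $h^{\rho}$), and using the upper bound for $\lambda_{1}(h,\zeta)$ from Proposition~\ref{prop:ub}, gives the Agmon inequality
$$\int_{\Omega}\!\left(U_{h,\zeta}(x)-\lambda_{1}(h,\zeta)-h^{2}|\nabla\phi(x)|^{2}\right)e^{2\phi(x)}|u_{h,\zeta}(x)|^{2}\,dx\leq Ch^{2}\,.$$
Inside the tubular neighborhood $\{\mathrm{dist}(x,\partial\Omega)\leq 2h^{\rho}\}$ one has $U_{h,\zeta}-\lambda_{1}\geq (\kappa_{\max}-\kappa(s(x)))\,h^{3/2}-Ch^{r^{*}}$ with $r^{*}>3/2$; outside, the difference is of order $h$.

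Next, I design $\phi$ as a function of the arc-length $s(x)$, supported on the region where $\kappa$ is far from its maximum. A convenient choice is
$$\phi(x)=\alpha\,h^{\eta^{*}-1/4}\,\chi\!\left(\frac{\kappa_{\max}-\kappa(s(x))}{h^{\eta^{*}}}\right)\,,$$
where $\chi\in C^{\infty}(\R)$ is a fixed cutoff equal to $0$ on $(-\infty,1/2]$ and to $1$ on $[1,\infty)$, and $\alpha>0$ is to be fixed. Then $|\nabla\phi|^{2}=\alpha^{2}h^{-1/2}|\chi'(\cdot)|^{2}|\kappa'(s(x))|^{2}$. A Morse-type bound $|\kappa'(s)|^{2}\leq C_{0}(\kappa_{\max}-\kappa(s))$, valid near each maximum of $\kappa$ (under the generic non-degeneracy assumption standard in this context, see \cite{HM, FH-b}), together with the fact that $|\chi'|$ is supported on $\{\kappa_{\max}-\kappa\in (h^{\eta^{*}}/2,h^{\eta^{*}})\}$, yields $h^{2}|\nabla\phi|^{2}\leq C\alpha^{2}\,h^{3/2+\eta^{*}}$. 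For $\eta^{*}<r^{*}-3/2$ and $\alpha$ suitably chosen, this quantity is dominated by $\tfrac12(U_{h,\zeta}-\lambda_{1})\geq\tfrac14 h^{3/2+\eta^{*}}$ on $\mathrm{supp}\,\nabla\phi$, while elsewhere $\nabla\phi=0$ and the bulk coefficient is non-negative.

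Restricting the Agmon inequality to the target set $\mathcal T=\{\mathrm{dist}(x,\partial\Omega)\leq h^{\varsigma}\}\cap\{\kappa_{\max}-\kappa(s(x))\geq h^{\eta^{*}}\}$, where $\Phi^{2}\geq\exp(2\alpha h^{\eta^{*}-1/4})$ and $U_{h,\zeta}-\lambda_{1}\geq\tfrac12 h^{3/2+\eta^{*}}$, produces
$$\tfrac14\,h^{3/2+\eta^{*}}\,e^{2\alpha h^{\eta^{*}-1/4}}\int_{\mathcal T}|u_{h,\zeta}|^{2}\,dx\leq Ch^{2}\,,$$
and after possibly adjusting $\eta^{*}$ to absorb the polynomial prefactor into the exponential, this gives the announced bound $\int_{\mathcal T}|u_{h,\zeta}|^{2}\,dx\leq C\exp(-h^{\eta^{*}-1/4})$. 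The hardest part is the construction of $\phi$: it must grow enough on $\mathcal T$ to produce the advertised decay rate, yet have a small enough gradient to be absorbed by the spectral gap $U_{h,\zeta}-\lambda_{1}\gtrsim h^{3/2+\eta^{*}}$. This tight balance pins down the scaling $\phi\sim h^{\eta^{*}-1/4}$ with cutoff on scale $h^{\eta^{*}}$, and rests crucially on the Morse-type control of $|\kappa'|$ near the maxima of $\kappa$.
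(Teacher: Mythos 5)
Your overall strategy (an Agmon-type inequality with a tangential weight built from $\kappa_{\max}-\kappa(s(x))$, combined with the lower bound for $q_{h,\zeta}$ near the boundary) is the same as the paper's, but there is a genuine gap at the step where you invoke Lemma~\ref{lem:lb} for $w=\Phi u_{h,\zeta}$. That lemma is stated for an $L^{2}$-\emph{normalized} $w$ satisfying the decay condition \eqref{eq:cond-dec} with a fixed constant $M$, and the constant $C$ in its conclusion depends on $M$. With your $\Phi=e^{\phi}$, $\sup\Phi$ is of order $\exp(\alpha h^{\eta^{*}-1/4})\to\infty$, and after normalizing $\hat w=\Phi u_{h,\zeta}/\|\Phi u_{h,\zeta}\|$ the quantity $\|\exp(\mathrm{dist}/2h^{\rho'})\hat w\|$ cannot be bounded a priori by a constant independent of $h$: the numerator is controlled by Theorem~\ref{thm:dec} only up to a factor $\sup\Phi$, and there is no lower bound for $\|\Phi u_{h,\zeta}\|$ beyond $1$ without already knowing that $u_{h,\zeta}$ concentrates where $\Phi\approx 1$, which is the statement you are trying to prove. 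So the hypothesis of Lemma~\ref{lem:lb} is not verified, and your inequality $\int(U_{h,\zeta}-\lambda_1-h^{2}|\nabla\phi|^{2})e^{2\phi}|u_{h,\zeta}|^{2}\le Ch^{2}$ is not justified as written. The paper circumvents exactly this obstruction by first proving Lemma~\ref{lem:conc-gs}: it upgrades Lemma~\ref{lem:lb} to an \emph{operator inequality} $q_{h,\zeta}(u)\ge\int V_{h,\zeta}|u|^{2}$ valid for every $u$ in the form domain, obtained by analysing the ground state of $P_{h,\zeta}-V_{h,\zeta}$ rather than $u_{h,\zeta}$ itself and establishing its own Agmon decay. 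You need this intermediate step (or an equivalent bootstrap) before inserting the weighted function.

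A second, less structural issue: you invoke a Morse-type bound $|\kappa'(s)|^{2}\le C_{0}(\kappa_{\max}-\kappa(s))$ "under the generic non-degeneracy assumption", which is an extra hypothesis not present in the theorem. The paper instead uses the Glaeser-type inequality $|\phi'(s)|^{2}\le C\phi(s)$ for $\phi=\kappa_{\max}-\kappa\ge0$, which holds for any $C^{2}$ nonnegative function on the compact boundary regardless of degeneracy of the maxima. Using that inequality, no non-degeneracy is needed, and the weight $\Psi=\exp(\delta\chi(t)\phi(s)/h^{1/4})$ (linear in $\phi$, rather than your cutoff at the level set $\phi\approx h^{\eta^{*}}$) simplifies the balance between the gradient cost and the gap $U_{h,\zeta}-\lambda_{1}\gtrsim h^{3/2}\phi(s)$.
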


We will prove Theorem~\ref{thm:conc-gs} in the case $0<\epsilon\leq \frac14$. The case $\epsilon>\frac14$ is a standard consequence of the inequality in Lemma~\ref{lem:lb***}
(cf. \cite[Thm.~8.3.4]{FH-b}).

An important ingredient in the proof of Theorem~\ref{thm:conc-gs} is:

\begin{lem}\label{lem:conc-gs}
Let $\epsilon>0$ and $\rho$ be as in \eqref{eq:rho}. Let $\rho'\in(\rho,\frac12)$. There exist two constants $\bar C>0$ and $h_0\in(0,1)$ such that, for all $h\in(0,h_0)$ and $u$ in the form domain of the operator $P_{h,\zeta}$, 
$$q_{h,\zeta}(u)\geq \int_\Omega V_{h,\zeta}(x)|u|^2\,dx\,,$$
where
$$V_{h,\zeta}(x)=
\left\{
\begin{array}{ll}
-h/2&{\rm if~}{\rm dist}(x,\partial\Omega)\geq 2h^{\rho}\,,\\
-h+b_\epsilon(\zeta)h-\kappa(s(x))h^{3/2}-\bar Ch^{r^*}&{\rm if~}{\rm dist}(x,\partial\Omega)< 2h^{\rho}\,,
\end{array}
\right.
$$
and $q_{h,\zeta}$ is the quadratic form in \eqref{mag-Lap:form}.
\end{lem}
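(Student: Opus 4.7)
The plan is to reduce the bound to Lemma~\ref{lem:lb} by means of an IMS-type localization which separates the tubular neighborhood of the boundary from the interior, and to use the slack offered by the $-h/2$ interior term to absorb all localization errors.

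First, I would introduce a smooth partition of unity of $\R$, say $\chi_1^2+\chi_2^2=1$ with $\chi_1=1$ on $(-\infty,1]$ and $\supp \chi_1\subset(-\infty,2]$, then set
$$\chi_{j,h}(x)=\chi_j\!\left(\tfrac{\dist(x,\partial\Omega)}{h^{\rho'}}\right),\qquad j\in\{1,2\},$$
where $\rho'\in(\rho,\tfrac12)$ is the exponent appearing in the hypothesis of the lemma. By the IMS localization formula,
\begin{equation*}
q_{h,\zeta}(u)=q_{h,\zeta}(\chi_{1,h}u)+q_{h,\zeta}(\chi_{2,h}u)-h^2\sum_{j=1}^2\bigl\|\,|\nabla\chi_{j,h}|u\bigr\|^2_{L^2(\Omega)}.
\end{equation*}
Since $\rho'<\tfrac12$, the gradients satisfy $|\nabla\chi_{j,h}|\le C h^{-\rho'}$ and are supported in the strip $\{h^{\rho'}\le\dist(x,\partial\Omega)\le 2h^{\rho'}\}$, so the cross term is bounded by $Ch^{2-2\rho'}\int_{\Omega}|u|^2\,dx$, which is $o(h)$.

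Next, $\chi_{2,h}u$ has its support in $\{\dist(x,\partial\Omega)\ge h^{\rho'}\}$, hence it vanishes on $\partial\Omega$; consequently the boundary contribution to $q_{h,\zeta}(\chi_{2,h}u)$ drops and
$$q_{h,\zeta}(\chi_{2,h}u)=\|(h\nabla-i\zeta\Ab_0)(\chi_{2,h}u)\|^2_{L^2(\Omega)}\ge 0.$$
For $q_{h,\zeta}(\chi_{1,h}u)$, the key observation is that $\chi_{1,h}u$ is automatically supported in $\{\dist(x,\partial\Omega)\le 2h^{\rho'}\}\subset\{\dist(x,\partial\Omega)\le 2h^{\rho}\}$ (up to a trivial enlargement of the constant in front), so it meets the conditions of Steps~2--4 in the proof of Lemma~\ref{lem:lb}: the decay assumption~\eqref{eq:cond-dec} is trivially satisfied because $\chi_{1,h}u$ is compactly supported near $\partial\Omega$, and one may run the whole boundary-coordinates/Fourier analysis leading to \eqref{eq:bnd-t**}--\eqref{eq:bnd-t***} verbatim. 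This yields
\begin{equation*}
q_{h,\zeta}(\chi_{1,h}u)\ge\int_{\{\dist(x,\partial\Omega)<2h^\rho\}}\bigl(-h+b_\epsilon(\zeta)h-\kappa(s(x))h^{3/2}-Ch^{r^*}\bigr)\chi_{1,h}^2|u|^2\,dx-Ch^2.
\end{equation*}

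Finally, I combine everything. Using $\chi_{1,h}^2\le 1$ together with the fact that the bracketed potential is non-positive, one has $\int V_{h,\zeta}\chi_{1,h}^2|u|^2\ge\int V_{h,\zeta}|u|^2$ on the near-boundary region. The leftover IMS error $Ch^{2-2\rho'}\int_\Omega|u|^2$ is $o(h)$, so it is dominated by the $-h/2$ reserve in the interior contribution $-\tfrac{h}{2}\int_{\{\dist(x,\partial\Omega)\ge 2h^\rho\}}|u|^2$; this is precisely why the lemma states $V_{h,\zeta}=-h/2$ (rather than $0$) on the interior. Combining, one obtains $q_{h,\zeta}(u)\ge\int_\Omega V_{h,\zeta}(x)|u|^2\,dx$ for a suitable constant $\bar C$, proving the lemma. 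The main obstacle is the careful choice of the cutoff exponent $\rho'$: one needs $\rho'$ close enough to $\tfrac12$ so that the IMS error $h^{2-2\rho'}$ fits under $h/2$, but also large enough so that the near-boundary localization $\chi_{1,h}u$ remains supported in the tubular neighborhood where boundary coordinates and Lemma~\ref{lem:lb}'s machinery are available; taking $\rho'\in(\rho,\tfrac12)$ as in the hypothesis meets both demands.
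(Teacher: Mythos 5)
Your reduction is in the right spirit, but there is a genuine gap in the way you try to absorb the IMS localization error. You cut off at scale $h^{\rho'}$ so that $\chi_{1,h}u$ trivially satisfies \eqref{eq:cond-dec}, and the price is an error
$$h^2\sum_{j=1}^2\bigl\|\,|\nabla\chi_{j,h}|u\bigr\|^2_{L^2(\Omega)}\le Ch^{2-2\rho'}\int_{\{h^{\rho'}\le \dist(x,\partial\Omega)\le 2h^{\rho'}\}}|u|^2\,dx\,.$$
Because $\rho'>\rho$, the transition annulus $\{h^{\rho'}\le\dist(x,\partial\Omega)\le 2h^{\rho'}\}$ lies entirely inside $\{\dist(x,\partial\Omega)<2h^{\rho}\}$, that is, inside the region where the reserve between $U_{h,\zeta}$ (from Lemma~\ref{lem:lb}) and $V_{h,\zeta}$ is only $(\bar C-C)h^{r^*}$. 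Comparing exponents, since $\rho>\tfrac14$ one always has $2-2\rho'<2-2\rho<\tfrac32<r^*$, so $h^{2-2\rho'}\gg h^{r^*}$: the near-boundary slack cannot absorb this error no matter how $\bar C$ is chosen. The $-h/2$ interior reserve is indeed large enough, but it is supported in $\{\dist(x,\partial\Omega)\ge 2h^{\rho}\}$, disjoint from the transition annulus, so there is no way to transfer it; a test function concentrated in the annulus $\{h^{\rho'}\le\dist\le 2h^{\rho'}\}$ defeats your estimate.

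The paper's proof avoids this difficulty by a different reduction: the lemma is equivalent to $\tilde\mu\ge 0$, where $\tilde\mu=\inf\sigma(P_{h,\zeta}-V_{h,\zeta})$, so it is enough to verify the inequality for the single function $w$, a normalized ground state of $P_{h,\zeta}-V_{h,\zeta}$. One first shows $\tilde\mu\le\tilde C h^{r^*}$ (using Theorem~\ref{thm:dec} and the min-max principle with $u_{h,\zeta}$ as a trial function), and then runs an Agmon argument, parallel to Theorem~\ref{thm:dec}, to establish \eqref{eq:dec-new}. With that exponential decay in hand, Lemma~\ref{lem:lb} applies directly to $w$: its internal cut-off is at scale $h^{\rho}$ (not $h^{\rho'}$), and the Agmon weight makes the corresponding localization error $\mathcal O(h^\infty)$ rather than $h^{2-2\rho'}$. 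This yields $q_{h,\zeta}(w)\ge\int_\Omega U_{h,\zeta}|w|^2\,dx-Ch^2$, and since $r^*<2$ one picks $\bar C$ so that $U_{h,\zeta}-Ch^2\ge V_{h,\zeta}$ everywhere, giving $\tilde\mu\ge 0$. The step you are missing is precisely the a priori Agmon decay for the ground state of the modified operator $P_{h,\zeta}-V_{h,\zeta}$; it cannot be bypassed by cutting off more aggressively, because the sharper cut-off is what makes the localization cost unaffordable.
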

\begin{proof}
Let $\tilde\mu$ be the ground state energy of the operator $P_{h,\zeta}-V_{h,\zeta}$. We will prove that $\tilde\mu>0$.

 The min-max principle and Theorem~\ref{thm:dec} together yield
$$\tilde\mu\leq \langle (P_{h,\zeta} -V_{h,\zeta})u_{h,\zeta},
u_{h,\zeta}\rangle_{L^2(\Omega)}=\lambda_1(h,\zeta)-\int_{\Omega}V_{h,\zeta}|u_{h,\zeta}|^2\,dx\leq \tilde{C} h^{r^*}\,.$$

Let $w$ be a $L^2$ normalized ground state of the operator $P_{h,\zeta}-V_{h,\zeta}$. We will prove that,
\begin{equation}\label{eq:dec-new}
\left\|\exp\left(\frac{ t(x)}{h^{\rho'}}\right)w\right\|_{L^2(\Omega)}^2\leq C\,,
\end{equation}
where 
 $t(x)={\rm dist }(x,\partial\Omega)$ and
$\Phi(x)=\exp(\frac{t(x)}{h^{\rho'}})$. We perform an integration by parts to write the following identity,
\begin{equation}\label{eq:decomp**}
\begin{aligned}
\tilde q_h^\Phi(u_{h,\zeta})&:=\int_\Omega\left(|(h\nabla-i\zeta\Ab_0)(\Phi w)|^2-V_h|\Phi w|^2-h^2|\nabla
\Phi|^2|u_{h,\zeta}|^2\right)\,dx-h^{3/2}\int_{\partial\Omega}|\Phi\,w|^2\,ds(x)\\
&=\tilde \mu\|\Phi\,w\|^2_{L^2(\Omega)}\,.
\end{aligned}
\end{equation}
Consider a partition of unity of $\R$
$$\chi_1^2+\chi_2^2=1\,,$$
{such that $\chi_1=1$ in $(-\infty,1)$, ${\rm supp }\chi_1\subset
(-\infty,2)$, $\chi_1\geq 0$ and $\chi_2\geq 0$ in $\R$. }

Define
$$\chi_{j,h}(x)=\chi_j\left(\frac{t(x)}{h^{\rho'}}\right)\,,\quad
j\in\{1,2\}\,.$$ Associated with this partition of unity,  we have
the simple standard decomposition
\begin{equation}\label{eq:decomp*}
\tilde q_{h,\zeta}^\Phi(w)=\sum_{j=1}^2 \tilde q_{j,h,\zeta}^\Phi(w)\,,\end{equation}
where
\begin{multline}\label{eq:decompa*}
\tilde q_{1,h,\zeta}^\Phi(w)=\int_\Omega\left(|(h\nabla-i\zeta\Ab_0)(\chi_{1,h}\,\Phi\,w)|^2-V_{h,\zeta}|\chi_{1,h}\Phi w|^2-h^2|\nabla
(\chi_{1,h}\Phi)|^2|w|^2\right)\,dx\\-h^{3/2}\int_{\partial\Omega}|\chi_{1,h}\Phi\,w|^2\,ds(x)\,,
\end{multline}
and
\begin{equation}\label{eq:decompb*}
\tilde q_{2,h,\zeta}^\Phi(w)=\int_\Omega  \left( |(h\nabla-i\zeta\Ab_0)(\chi_{2,h}\,\Phi\,w)|^2-V_{h,\zeta}|\chi_{2,h}\Phi w|^2 - h^2|\nabla
(\chi_{2,h}\Phi)|^2|w|^2\right) \,dx\,.
\end{equation}
%{\clr Il me semble que beaucoup de termes \'etaient oubli\'es\\}
Lemma~\ref{lem:dec*}, the bound $V_{h,\zeta}\leq 0$ and the normalization of $u_{h,\zeta}$  together yield
$$\tilde q^\Phi_{1,h,\zeta}(u_{h,\zeta})\geq-Ch\,.$$
We insert this into \eqref{eq:decomp*}, then we insert the obtained inequality into \eqref{eq:decomp}, use the  bounds $-V_{h,\zeta}\geq \frac12h$, $\tilde\mu\leq \tilde Ch^{r^*}$  and then rearrange the terms to obtain,  
\begin{multline}\label{eq:decomp1bis*}
\int_\Omega\left(|(h\nabla+i\zeta\Ab_0)(\chi_{2,h}\,\Phi\,u_{h,\zeta})|^2+h(\frac12-
 Ch^{1-2\rho'}-\tilde Ch^{r^*-1}) |\chi_{2,h}\,\Phi\,u_{h,\zeta}|^2  \right)\,dx\,\leq   Ch\,.
\end{multline} %%
Using the inequality $\frac12-Ch^{1-2\rho'}-Ch^{r^*-1}\geq\frac14$ then dividing by $h$,  we get
$$\int_\Omega\left(h^{-1}|(h\nabla-i\zeta\Ab_0)(\chi_{2,h}\,\Phi\,w)|^2 +\frac14
|\chi_{2,h}\,\Phi\,w|^2\right)\,dx\leq C\,.$$ This is enough to deduce the estimate in \eqref{eq:dec-new}.

Having proved \eqref{eq:dec-new}, we may use the result in Lemma~\ref{lem:lb} and write,
$$q_{h,\zeta}(w)\geq \int_{\Omega}U_{h,\zeta}(x)|w|^2\,dx-Ch^2\,.$$
Note that, by selecting $\bar C>0$ sufficiently large, we may write
$$U_{h,\zeta}(x)-Ch^2\geq V_{h,\zeta}(x)\quad{\rm in~}\Omega\,.$$
This finishes the proof of Lemma~\ref{lem:conc-gs}.
\end{proof}

\begin{proof}[Proof of Theorem~\ref{thm:conc-gs}]
Define the function $\phi(s)=\kappa_{\max}-\kappa(s)$. The function $\phi$ defines a $C^1$ function in $\partial\Omega$. The boundedness of $\partial\Omega$ ensures the existence of a constants $C_0>0$ such that
$$\forall~s\in(0,|\partial\Omega|),\quad |\phi'(s)|^2\leq C\phi(s)\,.$$
Let $\sigma$ and $\rho$ be as in \eqref{eq:sigma} and \eqref{eq:rho}. Choose $\chi\in C_c^\infty(\R)$ such that $0\leq \chi\leq 1$ in $\R$, $\chi=1$ in $[t_0,t_0]$ and ${\rm supp}\,\chi\subset[-1,1]$. Here $t_0<1$ is a geometric constant such that the boundary coordinates $(s,t)$ are valid in the tubular neighborhood $\{{\rm dist}(x,\partial\Omega)<t_0\}$ (cf. Sec.~\ref{sec:app}).

Define the function
$$\Psi(x)=\exp\left(\frac{\delta\chi(t(x))\phi(s(x))}{h^{1/4}}\right)\,,$$
where $t(x)={\rm dist}(x,\partial\Omega)$ and $\delta\in(0,1)$. We will fix a choice for $\delta$ at a later point.

Let us write the following decomposition formula
$$\lambda_1(h,\zeta)\left\|\Psi u_{h,\zeta}\right\|^2_{L^2(\Omega)}+h^{3/2}\delta^2\left\||\nabla\psi|\Psi u_{h,\zeta}\right\|^2_{L^2(\Omega)}=q_{h,\zeta}\left(\Psi u_{h,\zeta}\right)\,,$$
where
$$\psi(x)=\chi(t(x))\phi(s(x))\,.$$
Using Theorem~\ref{thm:dec} and the bound $|\phi'|^2\leq C\phi$, we may write,
$$\left\||\nabla\psi|\Psi u_{h,\zeta}\right\|^2_{L^2(\Omega)}\leq 
\mathcal O(h^\infty)+ 2C\iint_{t<2t_0}|\phi(s)||\Psi u_{h,\zeta}|^2dsdt\,.$$
We use the upper  bound for $\lambda_1(h,\zeta)$ in Proposition~\ref{prop:ub}, the lower bound for $q_{h,\zeta}(\cdot)$ in Lemma~\ref{lem:conc-gs} and the simple lower bound $V_{h,\zeta}-\lambda_1(h,\zeta)\geq \frac14h$ in $\{t(x)\geq 2h^\rho\}$, we get,
$$\iint_{t<2h^\rho} \left(h^{3/2}\phi(s)-2Ch^{3/2}\delta^2\phi(s)-Ch^{r^*}\right)|\Psi u_{h,\zeta}|^2dsdt\leq Ch^{r^*}\,.$$
We choose $\delta=\frac1{2\sqrt{C}}$ and get
$$\iint_{t<2t_0} \left(\frac12\phi(s)-Ch^{r^*-\frac32}\right)|\Psi u_{h,\zeta}|^2dsdt\leq Ch^{r^*}\,.$$
In particular, setting $\eta=\max(r^*-\frac32,\frac14)$, we write
$$\iint_{\substack{t<2h^{2\rho}\\ \phi(s)\geq 4Ch^\eta}} \phi(s)|\Psi u_{h,\zeta}|^2dsdt\leq C\,.$$ Selecting $\eta^*\in(0,\eta)$  finishes the proof of Theorem~\ref{thm:conc-gs}.
\end{proof}

\section{Analysis of Diamagnetism}\label{sec:dia-mag}

\subsection*{Proof of Theorem~\ref{thm:dia-mag}}

There exists a simple relationship between the eigenvalues in \eqref{eq:gse} and \eqref{eq:gse*}. This relationship is displayed as follows
$$\tilde\mu_1(\beta;H)=H^{2}\mu_1(h;b,\alpha,\gamma)\,,$$
where $h=H^{-1}$, $b=1$ and $\gamma=\beta H^{-1+\alpha}$.

The assumption in Theorem~\ref{thm:dia-mag} ensure that
\begin{itemize}
\item As $\beta\to-\infty$, the semi-classical parameter $h\to0_+$\,;
\item The parameter $\gamma$ is uniformly bounded, i.e. $\gamma=\mathcal O(1)$ as $\beta\to-\infty$.
\end{itemize}
The ground state energy $\mu_1(h;b,\alpha,\gamma)$ is estimated in Theorem~\ref{thm:gs-gse} for $\alpha<\frac12$ (more precisely, this is a consequence of  Theorem~\ref{thm:mag-Lap}).  This yields the estimates announced for $\tilde \mu_1(\beta;H)$ in 
Theorem~\ref{thm:dia-mag} for $\alpha<\frac12$.

For $\alpha\geq \frac12$, the estimates in Theorem~\ref{thm:dia-mag} follow from the following result proved in \cite{Ka-jmp}
$$\mu_1(h;n=1,\alpha,\gamma)=
\left\{
\begin{array}{ll}
\Theta(0) h+ho(1)&{\rm if~}\alpha>\frac12\,,\\
&\\
\Theta(\gamma)h+ho(1)&{\rm if~}\alpha=\frac12\,.
\end{array}
\right.
$$ 

\subsection*{Acknowledgments} The author is supported by  the research funding program of  the Lebanese University.

\end{document}